\documentclass[a4paper,10pt, oneside, reqno]{amsart}
\pdfoutput=1

\usepackage[activate={true,nocompatibility},final,tracking=false,kerning=true,spacing=true,factor=1100,stretch=0,shrink=0]{microtype}

\usepackage[T1]{fontenc} 
\usepackage[utf8]{inputenc}
\usepackage[british]{babel}
\usepackage{mathrsfs}
\usepackage{amsfonts}
\usepackage{amsmath} 
\usepackage{amsthm}  
\usepackage{amssymb}
\usepackage{mathtools}
\mathtoolsset{showonlyrefs}
\usepackage{graphicx}
\usepackage{xcolor}
\usepackage{url}
\usepackage[noadjust]{cite}
\usepackage{xifthen}
\usepackage{stmaryrd}
\usepackage{centernot}
\usepackage{indentfirst}
\usepackage[hang,flushmargin]{footmisc}
\usepackage{geometry}
\usepackage{hyperref}
\setlength{\marginparwidth}{2.5cm}
\usepackage{tikz}
\usetikzlibrary{matrix,arrows,decorations.pathmorphing}

\usepackage{enumitem}
\setlist[1]{labelindent=\parindent}
\setlist[enumerate,1]{label = \arabic*.,ref   = \arabic*,wide, labelwidth=!, labelindent=0pt}
\setlist[enumerate,2]{label = \alph*),ref   = \theenumi\alph*)}
\setlist[enumerate,3]{label = \roman*),ref   = \theenumii.\roman*}

\newcommand{\from}{\colon}
\newcommand{\into}{\hookrightarrow}
\newcommand{\implica}{\rightarrow}
\newcommand{\coimplica}{\leftrightarrow}
\renewcommand{\phi}{\varphi}
\renewcommand{\epsilon}{\varepsilon}
\renewcommand{\models}{\vDash}
\newcommand{\proves}{\vdash}
\newcommand{\pfin}{{\mathscr P}_{<\omega}}
\newcommand{\restr}{\upharpoonright}
\newcommand{\monster}{\mathfrak U}
\newcommand{\smallsubset}{\mathrel{\subset^+}}

\newcommand{\smallprec}{\mathrel{\prec^+}}

\newcommand{\invtypes}{S^{\mathrm{inv}}}
\newcommand{\invext}{\mid}
\newcommand{\bla}[4]{{#1}_{#2}#3\ldots#3{#1}_{#4}}
\newcommand{\wort}{\mathrel{\perp^{\!\!\mathrm{w}}}}
\newcommand{\nwort}{\mathrel{{\centernot\perp}^{\!\!\mathrm{w}}}}
\newcommand{\invtilde}{\operatorname{\widetilde{Inv}}(\monster)}
\newcommand{\invtildeof}[1]{\operatorname{\widetilde{Inv}}({#1})}

\newcommand{\invtildestarof}[2]{\operatorname{\widetilde{Inv}}_{#1}({#2})}
\newcommand{\invbar}{\operatorname{\overline{Inv}}(\monster)}

\newcommand{\domeq}{\mathrel{\sim_\mathrm{D}}}
\newcommand{\doms}{\mathrel{\ge_\mathrm{D}}}
\newcommand{\ndoms}{\mathrel{{\centernot\ge}_{\mathrm{D}}}}

\newcommand{\inverse}{^{-1}}
\newcommand{\allora}{\Rightarrow}

\newcommand{\eq}{\mathrm{eq}}

\newcommand{\RV}{\mathrm{RV}}
\newcommand{\K}{\mathrm{K}}
\newcommand{\key}{\mathrm{k}}
\newcommand{\Gam}{\mathrm{\Gamma}}
\newcommand{\RVses}{\mathcal{RV}}
\newcommand{\primo}{\mathfrak{p}}
\newcommand{\constants}{C}
\newcommand{\icsg}{\operatorname{CS}^\mathrm{inv}}

\DeclareMathOperator{\Th}{Th}
\DeclareMathOperator{\tp}{tp}
\DeclareMathOperator{\dcl}{dcl}
\DeclareMathOperator{\acl}{acl}
\DeclareMathOperator{\aut}{Aut}

\DeclareMathOperator{\trdeg}{trdeg}
\DeclareMathOperator{\rv}{rv}
\DeclareMathOperator{\Kof}{K}
\DeclareMathOperator{\keyof}{k}
\DeclareMathOperator{\RVof}{RV}
\DeclareMathOperator{\Gamof}{\mathrm{\Gamma}}
\DeclareMathOperator{\RVsesof}{\mathcal{RV}}

\DeclareMathOperator{\sort}{Q}
\renewcommand{\div}{\operatorname{div}}
\DeclarePairedDelimiter{\set}{\{}{\}}
\DeclarePairedDelimiter{\abs}{\lvert}{\rvert}
\DeclarePairedDelimiter{\class}{\llbracket}{\rrbracket}

\DeclarePairedDelimiter{\Span}{\langle}{\rangle}

\theoremstyle{definition}
\newtheorem{defin}{Definition}[section]

\newtheorem{thm}[defin]{Theorem}
\newtheorem{pr}[defin]{Proposition}
\newtheorem{co}[defin]{Corollary}
\newtheorem{lemma}[defin]{Lemma}
\newtheorem{notation}[defin]{Notation}
\newtheorem{eg}[defin]{Example}
\newtheorem{rem}[defin]{Remark}
\newtheorem{fact}[defin]{Fact}
\newtheorem{ass}[defin]{Assumption}

\newtheorem{question}[defin]{Question}
\newtheorem{prob}[defin]{Problem}

\newtheorem*{claim}{Claim}
\theoremstyle{plain}
\newtheorem{alphthm}{Theorem}

\let\oldqed\qedsymbol
\newcommand{\qedclaim}{\mbox{$\underset{\textsc{claim}}{\square}$}}
\makeatletter
\newenvironment{claimproof}[1][\it Proof of Claim]{
\let\qedsymbol\qedclaim
  \par
  \pushQED{\qed}%
  \normalfont \topsep6\p@\@plus6\p@\relax
  \trivlist
\item[\hskip\labelsep
  \upshape
  #1\@addpunct{.}]\ignorespaces
}{%
  \popQED\endtrivlist\@endpefalse
}
\makeatother
\let\qedsymbol\oldqed

\author{Martin Hils}
\address{Institut f\"ur Mathematische Logik und Grundlagenforschung, Universit\"at M\"unster, Einsteinstr. 62, D-48149 M\" unster, Germany}
\email{hils@uni-muenster.de}
\author{Rosario Mennuni}
\address{Universit\`a di Pisa, Dipartimento di Matematica, Largo Bruno Pontecorvo 5, 56127, Pisa, Italy}
\email{R.Mennuni@posteo.net}
\thanks{Both authors were supported by the Deutsche Forschungsgemeinschaft (DFG)  via HI 2004/1-1 (part of the ANR-DFG project GeoMod) and under Germany’s Excellence Strategy EXC 2044-390685587, Mathematics Münster: Dynamics-Geometry-Structure. RM was supported by the Italian research project  PRIN 2017: ``Mathematical Logic: models, sets, computability'' Prot.~2017NWTM8RPRIN and is a member of the INdAM research group GNSAGA}
\title{The domination monoid in henselian valued fields}
\keywords{model theory, invariant types, domination monoid,  valued fields, ordered abelian groups}
\makeatletter
\@namedef{subjclassname@2020}{\textup{2020} MSC} 
\makeatother
\subjclass[2020]{Primary: 03C45. Secondary: 03C60, 03C64, 12J10, 12L12.}

\setcounter{secnumdepth}{2}

\begin{document}
\begin{abstract}
We study the domination monoid in various classes of structures arising from henselian valuations, including  $\RVses$-expansions of henselian valued fields of equicharacteristic $0$ (and, more generally, of benign valued fields), $\primo$-adically closed fields, monotone D-henselian differential valued fields with many constants, regular ordered abelian groups, and pure short exact sequences of abelian structures. We obtain Ax--Kochen--Ershov type reductions to suitable fully embedded families of sorts in quite general settings, and full computations in concrete ones. 
\end{abstract}

\maketitle
In their seminal work~\cite{hhm} on stable domination, Haskell, Hrushovski and Macpherson introduced the \emph{domination monoid} $\invtilde$, and showed that in algebraically closed valued fields it decomposes as $\invtildeof{\keyof(\monster)}\times \invtildeof{\Gamof(\monster)}$, where $\key$ denotes the residue field and $\Gam$ the value group.\footnote{Strictly speaking, \cite{hhm} works with $\invbar$, which is in general different, but coincides with $\invtilde$ in their setting. See~\cite[Remark~2.1.14 and Theorem~5.2.22]{mennuni_thesis}.} A similar result was proven in~\cite{ehm,dominomin} in the case of real closed fields with a convex valuation. This paper revolves around understanding  $\invtilde$ in more general classes of valued fields, and expansions thereof. A special case of our results is the following.
\begin{alphthm}[{Corollary~\ref{co:benign}}]\label{athm:bendec}
Let $T$ be the theory of a henselian valued field of equicharacteristic $0$, or algebraically maximal Kaplansky, possibly enriched on $\key$ and $\Gamma$.  If all  $\key^\times/(\key^\times)^n$ are finite, then $\invtilde\cong\invtildeof{\keyof(\monster)}\times \invtildeof{\Gamof(\monster)}$.
\end{alphthm}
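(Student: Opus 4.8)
The plan is to deduce the statement from two more general results established earlier in the paper: the Ax--Kochen--Ershov style reduction for benign valued fields, and the computation of the domination monoid of an (enriched) pure short exact sequence of abelian structures.

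The first step is to note that both families of theories in the statement are benign --- henselian valued fields of equicharacteristic $0$, and algebraically maximal Kaplansky fields, each admit a relative quantifier elimination down to the leading-term structure $\RVsesof(\monster)$, which is moreover stably embedded, and this persists under enrichment of the residue field (by its field structure, or more) and of the value group. Since the reduction theorem is formulated so as to tolerate arbitrary enrichments concentrated on $\key$ and $\Gamma$, it applies verbatim and yields an isomorphism $\invtilde \cong \invtildeof{\RVsesof(\monster)}$ via the natural map induced by the inclusion of the leading-term sorts.

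The second step is to compute $\invtildeof{\RVsesof(\monster)}$. Together with the field structure on $\keyof(\monster)$ (and any additional structure on $\Gamof(\monster)$), the reduct $\RVsesof(\monster)$ is a pure short exact sequence $1 \to \keyof(\monster)^\times \to \RVof(\monster) \to \Gamof(\monster) \to 0$. The hypothesis that every $\keyof(\monster)^\times / (\keyof(\monster)^\times)^n$ is finite is exactly the condition under which the general computation for pure short exact sequences produces a direct product: it forces every $\monster$-invariant type of an $\RV$-tuple to be domination equivalent to the product of a type concentrated on $\key$ with a type concentrated on $\Gamma$, so that the extension contributes nothing to $\invtilde$ beyond $\invtildeof{\keyof(\monster)} \times \invtildeof{\Gamof(\monster)}$ --- where $\invtildeof{\keyof(\monster)}$ is understood with the full field structure on $\keyof(\monster)$. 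Composing with the isomorphism from the first step gives the statement.

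The step I expect to be the main obstacle is the second one, and within it the implication that finiteness of the groups $\keyof(\monster)^\times / (\keyof(\monster)^\times)^n$ suffices to decouple the residue field from the value group inside $\RV$. An element of $\RVof(\monster)$ records at once a value in $\Gamof(\monster)$ and, fibrewise, a residue-field coset, so a priori its invariant type could have a domination class that genuinely entangles the two factors; the crux is that the only obstructing datum, the relevant coset modulo $n$-th powers, is algebraic over the base and hence invisible to domination, but turning this observation into a clean product decomposition is where the earlier analysis of invariant types in pure short exact sequences does the real work. A secondary point requiring care is that everything must be carried out for $\invtilde$ rather than for the a priori coarser invariant $\invbar$ of \cite{hhm}: both the reduction and the short-exact-sequence computation have to respect the finer equivalence, as the paper's footnote already flags.
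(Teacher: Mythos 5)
Your proposal follows essentially the same two-step route as the paper: first reduce to $\RVsesof(\monster)$ via Theorem~\ref{thm:rvred}, then apply the short-exact-sequence decomposition (Corollary~\ref{co:abgrpses}), where finiteness of the $\key^\times/(\key^\times)^n$ is precisely what keeps the auxiliary quotient sorts algebraic and allows a clean finitary product $\invtildeof{\keyof(\monster)}\times\invtildeof{\Gamof(\monster)}$. Your identification of the crux --- that the coset-mod-$n$th-powers data is the only entangling datum, and that its finiteness renders it invisible to domination --- matches the role played by Corollary~\ref{co:abgrpses} and the orthogonality of $\key$ and $\Gam$ supplied by the $\set{\key}\textrm{-}\set{\Gam}$ form of the expansion.
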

More generally, we obtain a two-step reduction, first to leading term structures, and then, using technology on pure short exact sequences recently developed in~\cite{acgz20}, to $\key$ and $\Gamma$, albeit in a form which, in general, is (necessarily) slightly more involved. We also compute $\invtildeof{\Gamof(\monster)}$ when the  theory of $\Gamma$ has an archimedean model, and prove several accessory statements. 

Before stating our results in more detail, let us give an informal account of the context (see Section~\ref{sec:prelim} for the precise definitions).
The starting point is the space $\invtypes(\monster)$ of \emph{invariant types} over a monster model $\monster$: those which are invariant over a small subset. It is a dense subspace of $S(\monster)$, whose points may be canonically extended to larger parameter sets. Such extensions allow to define the \emph{tensor product}, or \emph{Morley product}, obtaining a semigroup $(\invtypes(\monster), \otimes)$, in fact a monoid. The space $\invtypes(\monster)$ also comes with a preorder $\doms$, called \emph{domination}: roughly, $p\doms q$ means that $q$ is recoverable from $p$ plus a small amount of information. The quotient by the induced equivalence relation, \emph{domination-equivalence} $\domeq$, is then  a poset, denoted by $(\invtilde, \doms)$. If $\otimes$ respects $\doms$, i.e.~if $(\invtypes(\monster), \otimes, \doms)$ is a preordered semigroup, then $\domeq$ is a congruence with respect to $\otimes$ and we say that \emph{the domination monoid is well-defined}, and equip $(\invtilde, \doms)$ with the operation induced by $\otimes$. Compatibility of $\otimes$ and $\doms$ in a given theory can be shown by using certain sufficient criteria, isolated in~\cite{invbartheory} and applied e.g.~in~\cite{wbdmt}, or by finding a nice system of representatives for $\domeq$-classes (cf.~Proposition~\ref{pr:comptransfer}). Nevertheless, in general, $\otimes$ may fail to respect $\doms$~\cite{invbartheory}. Hence, when dealing with $\invtilde$ in a given structure, one needs to understand whether it is well-defined as a monoid; and, when dealing with it in the abstract, the monoid structure cannot be taken for granted.

Recall that to a valued field $\K$ are associated certain abelian groups augmented by an absorbing element, fitting in a short exact sequence $1\to (\key, \times)\to (\K,\times)/(1+\mathfrak m)\to \Gam\cup\set\infty\to  0$, denoted by $\RVses$.
This sequence is interpretable in $\K$, and this interpretation endows it with extra structure. The amount of induced structure clearly depends on whether $\K$ has extra structure itself, but at a bare minimum $\key$ will carry the language of fields and $\Gam$ that of ordered abelian groups. 
By~\cite{Basarab_1991} (see also~\cite{Kuh}, or~\cite{flenner,hhqe} for a more modern treatment), henselian valued fields of residue characteristic $0$ eliminate quantifiers relatively to $\RVses$, and the latter is fully embedded with the structure described above. This holds resplendently, in the sense that it is still true after arbitrary expansions of $\RVses$. The same holds in the algebraically maximal Kaplansky case, by~\cite{Kuh} (see also~\cite{hhqe}).\footnote{Note that these quantifier elimination results are already implicitly contained in~\cite{Del-These}.} These are known after~\cite{touchardburden} as classes of  \emph{benign} valued fields and, in several contexts, they turn out to be particularly amenable to model-theoretic investigation. One of our main results says the context of domination is no exception.
\begin{alphthm}[{Theorem~\ref{thm:rvred}}]\label{athm:rv}
In every $\RVses$-expansion of a benign theory of valued fields there is an isomorphism of posets $\invtilde\cong\invtildeof{\RVsesof(\monster)}$.
  If $\otimes$ respects $\doms$ in $\RVsesof(\monster)$, then $\otimes$ respects $\doms$ in $\monster$, and the above is an isomorphism of monoids.
\end{alphthm}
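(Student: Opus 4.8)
The plan is to realise $\invtypes(\RVsesof(\monster))$, via the inclusion of sorts, as a nice system of representatives for $\domeq$-classes inside $\invtypes(\monster)$, in the sense of Proposition~\ref{pr:comptransfer}. Two structural facts make this inclusion well-behaved: an $\RVses$-concentrated type invariant over a small subset of $\RVsesof(\monster)$ is also $\monster$-invariant; and, as $\RVses$ is fully (in particular stably) embedded---resplendently, hence still after expanding $\RVses$---the $\monster$-type of a tuple from the $\RVses$-sorts is determined by its restriction to $\RVsesof(\monster)$. Granting that (a) every $\domeq$-class in $\invtypes(\monster)$ meets $\invtypes(\RVsesof(\monster))$, and that (b) for $\RVses$-concentrated invariant types the relations $\doms$ and $\domeq$ and the operation $\otimes$, as computed in $\monster$, agree with those computed internally in $\RVsesof(\monster)$, the rest is formal: the assignment $[p]\mapsto[p']$, for any $\RVses$-concentrated $p'\domeq p$, is a well-defined order-isomorphism $\invtilde\to\invtildeof{\RVsesof(\monster)}$ with inverse induced by the inclusion; moreover, if $\otimes$ respects $\doms$ in $\RVsesof(\monster)$ then by (b) it respects $\doms$ on the $\otimes$-closed set $\invtypes(\RVsesof(\monster))$, which by (a) meets every $\domeq$-class, so Proposition~\ref{pr:comptransfer} yields that $\otimes$ respects $\doms$ in $\monster$ and that the isomorphism above is one of monoids.

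The heart of the matter is (a): an Ax--Kochen--Ershov-type domination result generalising the $\RV$-domination underlying the decomposition of $\invtilde$ in $\mathrm{ACVF}$. Given $p=\tp(a/\monster)\in\invtypes(\monster)$, pick a small submodel $M\preceq\monster$ over which $p$ is invariant, taken maximally complete (or otherwise adapted to the resolution below), let $\bar b$ enumerate the $\RVses$-part of $\acl(Ma)$, and set $p'\coloneqq\tp(\bar b/\monster)$; this is $M$-invariant since $\bar b\in\acl(Ma)$, and lies in $\invtypes(\RVsesof(\monster))$ by stable embeddedness. The direction $p\doms p'$ is easy: writing $\bar b=(f_i(a))_i$ with the $f_i$ $M$-definable partial functions into $\RVses$-sorts, the $M$-invariant type $p(x)\cup\set{y_i=f_i(x):i}$ extends $p\cup p'$ and witnesses domination. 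For $p'\doms p$ one unpacks the definition of $\doms$ to the assertion that $\tp(a/M)$, together with $\tp(\bar b/\monster)$ and the equations $\bar b=f(a)$, implies $\tp(a/\monster)$; this is exactly where relative quantifier elimination over $\RVses$---\cite{flenner} in equicharacteristic $0$, \cite{hhqe} in the algebraically maximal Kaplansky case---is used, since modulo $\tp(a/M)$ every $\monster$-formula in $x$ reduces to an $\RVses$-condition on finitely many leading terms $\rv(g(x))$, $g\in\monster[x]$, all of which lie in $\dcl(Ma\bar b)$ with $\RVsesof(\monster)$-type prescribed by $p'$. Turning this into a clean proof---identifying the good small models $M$, handling tuples from sorts other than $\mathrm{VF}$, and tracking invariance---is the main obstacle; it is also where resplendence is indispensable, the whole argument having to be run inside an arbitrary $\RVses$-expansion.

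For (b), full embeddedness is used in the downward direction. If $q_1,q_2\in\invtypes(\RVsesof(\monster))$ and $r$ is a $\monster$-domination witness between them, then all the tuples in play come from $\RVses$-sorts, so by stable embeddedness $r$ is already determined over $\RVsesof(\monster)$; one checks that its restriction there is invariant over a small subset of $\RVsesof(\monster)$ and still witnesses $q_1\doms q_2$ internally, while conversely an internal witness reads off as a $\monster$-witness. The same bookkeeping shows that the Morley product of two $\RVses$-concentrated types---which is again $\RVses$-concentrated---agrees whether formed in $\monster$ or internally, so that $\invtypes(\RVsesof(\monster))$ is a submonoid of $(\invtypes(\monster),\otimes)$ on which the ambient $\doms$ restricts to the internal one. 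Together with (a) this is exactly the hypothesis of Proposition~\ref{pr:comptransfer}, and the proof---including the monoid clause---is complete.
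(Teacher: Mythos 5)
Your high-level plan coincides with the paper's: reduce to $\RVses$ using relative quantifier elimination, then transfer compatibility of $\otimes$ with $\doms$ via Proposition~\ref{pr:comptransfer}. However, there are three concrete gaps, all located precisely in the steps you flag as "the main obstacle" but do not actually resolve.

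First, the direction $p'\doms p$ is where the real work happens, and the ingredient you omit is the theory of separating bases (Lemma~\ref{lemma:sepbas}, Lemma~\ref{lemma:seplift}, Fact~\ref{fact:seprv}). For $g\in\monster[x]$ with coefficients outside $M$, the element $\rv(g(a))$ does \emph{not} lie in $\dcl(Ma\bar b)$, contrary to what you write: take $g(x)=x-d$ for $d\in\monster\setminus M$. What one needs instead is that $\rv(g(a))\in\dcl(\monster\bar b)$, and this is far from free — it is exactly the statement that $\rv(g(a))$ can be computed from the $\rv(f_i(a))$ via well-defined partial sums $\oplus$, which holds \emph{if and only if} the $(f_i(a))_i$ form a separating basis of $\K(\monster)[a]$ over $\K(\monster)$. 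This is why the paper takes $M$ both maximally complete (to produce a separating basis over $M$ by Baur's theorem) \emph{and} $\abs{M_0}^+$-saturated (to lift it to one over $\monster$ by Lemma~\ref{lemma:seplift}); without the saturation assumption, being a separating basis over $M$ does not transfer to $\monster$. Your sketch never engages with this.

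Second, you miss the finitisation. Your $\bar b$ enumerates the $\RVses$-part of $\acl(Ma)$, which is infinite, so your $p'=\tp(\bar b/\monster)$ is a \textasteriskcentered-type, not a finitary type, and therefore does not live in $\invtypes(\monster)$ nor in the quotient $\invtilde$. What you have sketched is (a version of) Corollary~\ref{co:rvredinf}, not Theorem~\ref{thm:rvred}. The paper closes the gap with Lemma~\ref{lemma:finitise} together with the Abhyankar inequality: the $\Gamma$-part and $\key$-part of the $\RVses$-tuple are finitely generated up to algebraicity (bounded by $\trdeg(\monster(a)/\monster)$), and the torsion-finiteness of $\RV$ lets one pass from $\rv(f_i(a))_{i<\omega}$ to a finite subtuple of which the rest is algebraic. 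This step is not a routine cleanup; it is a genuinely additional argument.

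Third, your appeal to Proposition~\ref{pr:comptransfer} glosses over its second hypothesis. Beyond $p\domeq\tau^p_*p$, that proposition also requires $p\otimes q\domeq\tau^p_*p\otimes\tau^q_*q$, and this does \emph{not} follow from knowing that $\otimes$ respects $\doms$ on the $\RVses$-concentrated types plus the fact that every class meets them: the whole point is that compatibility in $\monster$ is not yet available, so one cannot multiply domination-equivalences. The paper verifies this condition directly, via point~\ref{point:sepprod} of Lemma~\ref{lemma:sepbas}, which says that the tensor of two separating bases (one for $p$, one for $q$) is a separating basis for $p\otimes q$, and hence $\rho^{p\otimes q}$ may be taken to be the concatenation of $\rho^p$ and $\rho^q$. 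This compatibility of the reduction map with $\otimes$ is essential and missing from your sketch.
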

Having reduced $\invtilde$ to the short exact sequence $\RVses$, the next step is to reduce it to its kernel $\key$ and quotient $\Gam$. If we add an angular component map, the sequence $\RVses$ splits and we obtain a product decomposition as in Theorem~\ref{athm:bendec} (Remark~\ref{rem:denefpas}). Without an angular component, a product decomposition is not always possible; yet, $\key$ and $\Gamma$ still exert a tight control on $\RVses$.  This behaviour is not peculiar of $\RVses$: it holds in short exact sequences of abelian structures, provided they satisfy a purity assumption, 
using the relative quantifier elimination from~\cite{acgz20}. For reasons to be clarified later (Remark~\ref{rem:wesast}), here it is natural to look at types in infinitely many variables, say $\kappa$, hence at the corresponding analogue $\invtildestarof\kappa\monster$ of $\invtilde$.
\begin{alphthm}[Corollary~\ref{co:decLtps}]\label{athm:ses}
  Let $\monster$ be a pure short exact sequence $0\to \mathcal{A}\to \mathcal{B}\to \mathcal{C}\to 0$ of $L$-abelian structures, where  $\mathcal{A}$ and $\mathcal{C}$ may carry extra structure. Let $\kappa\ge \abs L$ be a small cardinal. There is an expansion $\mathcal{A}_\mathcal F$ of $\mathcal{A}$ by imaginary sorts yielding an isomorphism of posets
    $\invtildestarof{\kappa}\monster\cong \invtildestarof{\kappa}{\mathcal{A}_\mathcal F(\monster)}\times\invtildestarof{\kappa}{\operatorname{\mathcal C}(\monster)}$.
  If $\otimes$ respects $\doms$ in both $\mathcal{A}_\mathcal F(\monster)$ and $\operatorname{\mathcal C}(\monster)$, then $\otimes$ respects $\doms$ in $\monster$, and the above is an isomorphism of monoids.
\end{alphthm}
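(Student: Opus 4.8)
The plan is to feed the relative quantifier elimination of~\cite{acgz20} into the same machinery that produces Theorem~\ref{athm:rv}. Write $\monster=(\mathcal A(\monster),\mathcal B(\monster),\mathcal C(\monster))$ for the three-sorted structure and $\pi\from\mathcal B\to\mathcal C$ for the quotient map. The first step is to record the relevant consequence of~\cite{acgz20}: after adjoining to $\mathcal A$ the imaginary sorts collected in $\mathcal F$ (quotients of $\mathcal A$, such as the $\mathcal A/n\mathcal A$, with the structure induced on them), the structure $\monster$ eliminates quantifiers relative to the two fully embedded structures $\mathcal A_{\mathcal F}(\monster)$ and $\mathcal C(\monster)$; under the purity assumption (Definition~\ref{defin:ses}) the connecting maps $\mathcal C[n]\to\mathcal A/n\mathcal A$ vanish, $\mathcal A_{\mathcal F}(\monster)$ and $\mathcal C(\monster)$ are moreover \emph{orthogonal} — the family $\{\mathcal A_{\mathcal F},\mathcal C\}$ carries no structure beyond that of its two members — and the whole picture is resplendent, i.e.\ it persists under arbitrary expansions of $\mathcal A$ and of $\mathcal C$. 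This resplendence is what licenses the clause ``$\mathcal A$ and $\mathcal C$ may carry extra structure''.

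The heart of the argument is a type-level decomposition. Given an invariant type $p$ over $\monster$ in the variables $(x_i)_{i<\kappa}$ and a realisation $(a,b,c)$ with $a,b,c$ in the $\mathcal A$-, $\mathcal B$-, $\mathcal C$-sorts, set $\gamma=\pi(b)$ and let $p_{\mathcal C}$ be the invariant type of $(c,\gamma)$ over $\mathcal C(\monster)$ — a type purely in $\mathcal C$-variables, namely the $\mathcal C$-coordinates of $p$ together with the $\pi$-images of its $\mathcal B$-coordinates. Realise $p_{\mathcal C}$ in a small $\monster_1\succ\monster$, pick in $\mathcal B(\monster_1)$ a lift $b^{\sharp}$ of the realised copy of $\gamma$, so that $b-b^{\sharp}$ lies in the $\mathcal A$-sort; one then checks, using full embeddedness and orthogonality, that the invariant type $p_{\mathcal A}$ of $(a,b-b^{\sharp})$ over $\mathcal A_{\mathcal F}(\monster_1)$ in fact descends to an invariant type over $\mathcal A_{\mathcal F}(\monster)$: adjoining a copy of the $\mathcal C$-type and one generic $\mathcal B$-lift of it does not enlarge the $\mathcal A_{\mathcal F}$-reduct. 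The assignment $p\mapsto(p_{\mathcal A},p_{\mathcal C})$ inverts, \emph{modulo} $\domeq$, the ``product'' map $(q_{\mathcal A},q_{\mathcal C})\mapsto q_{\mathcal A}\otimes q_{\mathcal C}$ (with $q_{\mathcal A}$ on $\mathcal A$-variables and $q_{\mathcal C}$ on $\mathcal C$-variables): the inequality $p\doms p_{\mathcal A}\otimes p_{\mathcal C}$ holds because $p_{\mathcal C}$ is a reduct of $p$ while $p_{\mathcal A}$ is obtained from $p$ over a small set of parameters, and the reverse holds because reconstructing a realisation of $p$ from a realisation of $q_{\mathcal A}\otimes q_{\mathcal C}$ costs exactly one $\mathcal B$-lift of the already realised (and small) $\mathcal C$-part — which is precisely the ``small amount of information'' domination allows. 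Here splitting $\mathcal B$-variables into a $\mathcal C$-part and an $\mathcal A$-part enlarges the variable set, harmlessly because $\kappa$ is infinite (Remark~\ref{rem:wesast}).

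Granting this, orthogonality does the rest. Cross Morley products of an $\mathcal A_{\mathcal F}$-type and a $\mathcal C$-type commute and coincide with the unique amalgam, so $q_{\mathcal A}\otimes q_{\mathcal C}$ depends symmetrically and ``multiplicatively'' on its factors; and domination of such a product factors coordinatewise, so $q_{\mathcal A}\otimes q_{\mathcal C}\doms q_{\mathcal A}'\otimes q_{\mathcal C}'$ iff $q_{\mathcal A}\doms q_{\mathcal A}'$ and $q_{\mathcal C}\doms q_{\mathcal C}'$. Hence the product map descends to a well-defined bijection of posets
\[
  \invtildestarof{\kappa}{\mathcal A_{\mathcal F}(\monster)}\times\invtildestarof{\kappa}{\mathcal C(\monster)}\xrightarrow{\ \sim\ }\invtildestarof{\kappa}{\monster},
\]
with the decomposition above as its inverse. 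Finally, because $q\mapsto q_{\mathcal A}\otimes q_{\mathcal C}$ is compatible with $\otimes$ (again by orthogonality of the two kinds of factors), if $\otimes$ respects $\doms$ in each of $\mathcal A_{\mathcal F}(\monster)$ and $\mathcal C(\monster)$ then the left-hand product is a preordered monoid, $\otimes$ respects $\doms$ in $\monster$, and the displayed map becomes an isomorphism of monoids. Corollary~\ref{co:decLtps} is the specialisation to this setting.

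The step I expect to be the main obstacle is the ``de-fibration'' in the second paragraph: proving that $p_{\mathcal A}$, which a priori only makes sense over the extension $\monster_1$ obtained by realising the $\mathcal C$-part, genuinely descends to $\monster$, and dually that no information is lost in passing from $p$ to $(p_{\mathcal A},p_{\mathcal C})$. This is exactly where purity — through the vanishing of the connecting maps, hence orthogonality — is indispensable, where one reuses the domination-theoretic bookkeeping carried out for $\RVses$ in the proof of Theorem~\ref{thm:rvred}, and where working with $\kappa\ge\abs{L}$ variables instead of finite tuples is, as Remark~\ref{rem:wesast} explains, a necessity rather than a convenience.
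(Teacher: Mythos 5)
Your overall plan --- reduce via the relative quantifier elimination of~\cite{acgz20} to types in the $\mathcal A_\mathcal F$- and $\mathcal C$-sorts, then use full embeddedness and orthogonality (Corollary~\ref{co:orthsortsiso}, Proposition~\ref{pr:comptransfer}) to get the product of posets and the transfer of $\otimes$/$\doms$-compatibility --- is the paper's plan. But the step you yourself flag as ``the main obstacle'' is a genuine gap, and the lift-and-subtract construction cannot be repaired to close it. The type $\tp(a,\,b-b^\sharp/\mathcal A_\mathcal F(\monster_1))$ depends essentially on the choice of lift $b^\sharp$, and a poor choice destroys precisely the information that distinguishes $\domeq$-classes: if, say, $\rho_2(b)$ lies in a coset of $\mathcal A/2\mathcal A$ not realised over $\monster$, nothing stops $b^\sharp$ from satisfying $\rho_2(b^\sharp)=\rho_2(b)$, whereupon $\rho_2(b-b^\sharp)=0$ and the ``new coset'' datum has been swallowed into the parameter set $\monster_1$. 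Any descent of the resulting type to $\monster$ is then strictly weaker than $p$ and $p\domeq p_\mathcal A\cup p_\mathcal C$ fails. So the problem is not merely that $p_\mathcal A$ needs to descend; it is that a global lift does not canonically exist and the wrong one loses information.

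The paper's Proposition~\ref{pr:esesred} avoids ever leaving $\monster$. The $\mathcal A_\mathcal F$-and-$\mathcal C$-type associated to $p$ is the pushforward $\tau^p_*p$ along a tuple $\tau^p$ of $\monster$-definable maps constructed term by term: for each pp formula $\phi\in\mathcal F$ and each finite tuple of $L_\mathrm b$-terms $t(x^\mathrm b,w)$, one asks whether there is $d\in\monster$ with $p\proves t(x^\mathrm b,0)-d\in\nu^{-1}(\phi(\mathcal C))$; if so one fixes such a $d_{p,\phi,t,x^\mathrm b}$ and puts the map $\rho_\phi\bigl(t(x^\mathrm b,0)-d_{p,\phi,t,x^\mathrm b}\bigr)$ into $\tau^p$. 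Each shift is tailored to $p$ but has parameters inside $\monster$, so there is no descent problem; chasing Fact~\ref{fact:sesqe} shows that $\tau^p_*p$ together with the small type ``$y=\tau^p(x)$'' recovers every special term with parameters, hence $p$ itself, and a mild bookkeeping argument gives the compatibility with $\otimes$. This construction also explains the role of $\kappa\ge|L|$ more accurately than your reading of Remark~\ref{rem:wesast}: it is not that splitting a $\mathcal B$-variable into a $\mathcal C$-part and an $\mathcal A$-part inflates the variable count (that only doubles it), but that one must retain one coordinate for each pair $(\phi,t)$ --- a priori $|L|$-many --- because a single $\mathcal A$-valued datum cannot carry the entire family of $\mathcal A_\phi$-cosets.
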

In algebraically or real closed valued fields, the isomorphism $\invtilde\cong\invtildeof{\keyof(\monster)}\times \invtildeof{\Gamof(\monster)}$ is complemented by a computation of the factors, carried out in~\cite{hhm,dominomin}. In particular, if $\Gamof(\monster)$ is divisible, then $\invtildeof{\Gamof(\monster)}$ is isomorphic to the upper semilattice of finite sets of invariant convex subgroups of $\Gamof(\monster)$ (in the sense of Definition~\ref{defin:invconvsbgrs}). A further contribution of this work is the computation of $\invtilde$ in the next simplest class of theories of ordered abelian groups: those with an archimedean model, known as \emph{regular}.  Denote by $\icsg(\monster)$ the set of invariant convex subgroups of $\monster$,  by $\mathscr P_{\le\kappa}(\icsg(\monster))$ the upper semilattice of its subsets of size at most $\kappa$, and by $\hat \kappa$  the ordered monoid of cardinals smaller or equal than $\kappa$ with cardinal sum.

\begin{alphthm}[{Corollary~\ref{co:isro}}]\label{athm:oagsdom}
Let $T$ be the theory of a regular ordered abelian group, $\kappa$ a small infinite cardinal, and  $\mathbb P_T$ the set of primes $\primo$ such that $\monster/\primo \monster$ is infinite. Then  $\invtildestarof\kappa{\monster^\eq}$ is well-defined, and
    $\invtildestarof\kappa{\monster^\eq}\cong \mathscr P_{\le\kappa}(\icsg(\monster))\times \prod_{\mathbb P_T} \hat \kappa$.
\end{alphthm}
Theorem~\ref{athm:oagsdom} applies to Presburger Arithmetic, the theory of $(\mathbb Z,+,<)$. Pairing this 
with a suitable generalisation of Theorem~\ref{athm:rv}, we obtain the following.
\begin{alphthm}[{Corollary~\ref{co:qp}}]\label{athm:qp}
  In the theory $\Th(\mathbb Q_\primo)$ of $\primo$-adically closed fields, $\otimes$ respects $\doms$, and $\invtilde\cong\mathscr P_{<\omega}(\icsg(\Gamof(\monster)))$.
  
\end{alphthm}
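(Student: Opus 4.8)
The plan is to chain the two-step reduction of the introduction with the computation of the domination monoid of a $\mathbb Z$-group. Although $\Th(\mathbb Q_{\primo})$ is not benign, $\primo$-adically closed fields admit a relative quantifier elimination down to a leading-term structure which is (resplendently) fully embedded; this feeds into the same machinery behind Theorem~\ref{thm:rvred}, and the output is an isomorphism of posets $\invtilde \cong \invtildeof{\RVsesof(\monster)}$, which upgrades to an isomorphism of monoids once $\otimes$ is known to respect $\doms$ in $\RVsesof(\monster)$. Proving this is the ``suitable generalisation of Theorem~\ref{athm:rv}'' referred to in the introduction, and it is where the substance lies.

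The point of specialising to $\mathbb Q_{\primo}$ is that both the residue field $\mathbb F_{\primo}$ and the ramification data are \emph{finite}, so that $\RVsesof(\monster)$ is a pure short exact sequence $1 \to \mathcal A \to \RVsesof(\monster) \to \Gamof(\monster) \to 0$ in the sense of Definition~\ref{defin:ses}: purity is automatic since the quotient $\Gamof(\monster)$, a monster model of Presburger arithmetic, is torsion-free. The kernel $\mathcal A$ is finite, hence so is any expansion $\mathcal{A}_\mathcal F$ of it by imaginary sorts; a finite structure has trivial, well-defined domination monoid (all its types being algebraic), so $\otimes$ respects $\doms$ in it trivially. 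Corollary~\ref{co:decLtps} (or, more directly, the finiteness of the kernel) then collapses the kernel factor and gives $\invtildeof{\RVsesof(\monster)} \cong \invtildeof{\Gamof(\monster)}$, reducing well-definedness in $\RVsesof(\monster)$ to well-definedness in $\Gamof(\monster)^{\eq}$.

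It remains to compute $\invtildeof{\Gamof(\monster)}$. Presburger arithmetic is regular, so the finitary version of Corollary~\ref{co:isro} applies to $\Gamof(\monster)^{\eq}$: it gives that $\otimes$ respects $\doms$ there and that $\invtildeof{\Gamof(\monster)^{\eq}} \cong \mathscr P_{<\omega}(X) \times \prod_{\mathbb P_T} \mathbb N$, with $X$ the set of invariant convex subgroups (Definition~\ref{defin:invconvsbgrs}). But $[\Gamof(\monster) : \primo\, \Gamof(\monster)] = \primo$ for every prime $\primo$, so $\mathbb P_T = \emptyset$ and the second factor is trivial; and since all imaginary sorts of a $\mathbb Z$-group are finite, $\invtildeof{\Gamof(\monster)} \cong \invtildeof{\Gamof(\monster)^{\eq}} \cong \mathscr P_{<\omega}(X)$. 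Chaining the three reductions, $\otimes$ respects $\doms$ in $\monster$ and $\invtilde \cong \mathscr P_{<\omega}(X)$.

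The main obstacle is the first step: one has to identify which leading-term sorts are needed for relative quantifier elimination in the finitely ramified case, verify resplendent full embeddedness, and check that the lifting of domination along the valued-field/leading-term reduction --- the heart of the proof of Theorem~\ref{thm:rvred} --- survives outside the benign setting. A secondary issue is bookkeeping: passing between finitary and $\kappa$-indexed domination monoids, and between a structure and its $\eq$ (harmless here, the new sorts being finite), and, should the leading-term structure of $\mathbb Q_{\primo}$ turn out to be a bounded family of short exact sequences rather than a single one, organising the application of Corollary~\ref{co:decLtps} accordingly. The remaining ingredients --- triviality of the domination monoid of a finite structure, purity of a short exact sequence with torsion-free quotient, and $\mathbb P_T = \emptyset$ for $\mathbb Z$-groups --- are routine given Corollaries~\ref{co:decLtps} and~\ref{co:isro}.
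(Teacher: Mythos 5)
Your high-level plan — reduce to leading terms, then to $\Gam$, then apply Corollary~\ref{co:isro} with $\mathbb P_T=\emptyset$ — is the right one, and you correctly identify that the obstacle is the leading-term reduction outside the benign setting. But you hedge on the crucial point rather than resolve it, and the resolution changes the shape of the argument. In mixed characteristic, relative quantifier elimination (Fact~\ref{fact:rvqemix}, i.e.\ Flenner's theorem) requires the full family $\RVses_*$ of sorts $\RV_n$, $n\in\omega$, not a single $\RVses$. Because of this, the paper's reduction to leading terms (Proposition~\ref{pr:rvotmix}) is only proven at the level of $\kappa$-types: a finitary type in $\K$ is a priori domination-equivalent to an $\omega$-type scattered across infinitely many $\RV_n$'s. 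So if you go via Corollary~\ref{co:decLtps}, you land in $\invtildestarof\kappa{\monster}$, not $\invtilde$, and the descent to finitary types — which you file under ``bookkeeping'' — is in fact the content of the corollary. Your appeal to ``imaginary sorts of a $\mathbb Z$-group being finite'' finitises in $\Gam$, but it does not tell you that the $\omega$-type you obtained from $\RVses_*$ collapses to a finitary one to begin with.

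The paper's proof sidesteps the short exact sequence machinery entirely, and is strictly simpler than what you propose: since each $\key_n$ is finite (for $\mathbb Q_\primo$, $\key_n\cong(\mathbb Z/\primo^n\mathbb Z)^\times$), each $\RV_n$ is a \emph{finite cover} of $\Gam$, so every type in $\RV_n^{<\omega}$ is interalgebraic over $\monster$ with a type in $\Gam^{<\omega}$, and Fact~\ref{fact:algok} makes interalgebraicity compatible with $\otimes$. The finitisation is then carried out by the Abhyankar inequality: the $\omega$-tuple of $\rv_n$-values of a separating basis of $\K(\monster)[a]$ generates, modulo $\mathbb Q\Gamof(\monster)$, a $\mathbb Q$-vector space of dimension at most $\trdeg(\monster(a)/\monster)<\omega$, so only finitely many coordinates matter and the rest are algebraic over them. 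This gives a \emph{finitary} type in $\Gam$ domination-equivalent to $p$, and Proposition~\ref{pr:comptransfer} finishes. No application of Corollary~\ref{co:decLtps}, no sorts $\mathrm{A}_\phi$, no purity check. Your computation of the answer itself (applying Corollary~\ref{co:isro}, noting all $\Gam/\primo\Gam$ are finite, hence $\mathbb P_T=\emptyset$, hence $\pfin(X)$) is correct, and is what the paper does via Corollary~\ref{co:itro}.
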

 A similar statement (Corollary~\ref{co:wittv}) holds for Witt vectors over $\mathbb F_\primo^\mathrm{alg}$.
 Finally, we move to monotone D-henselian differential valued fields with many constants. While Theorem~\ref{athm:rv} does not generalise to this context (Remark~\ref{rem:vdfnofin}),  its analogue for $\invtildestarof\kappa\monster$ does (Theorem~\ref{thm:diff}). We fully compute $\invtildestarof\kappa\monster$ in the model companion $\mathsf{VDF}_\mathcal{EC}$. Similar results hold for $\sigma$-henselian valued difference fields (Remark~\ref{rem:vdifff}).
\begin{alphthm}[{Theorem~\ref{thm:vdfec}}]\label{athm:vdf}
  In  $\mathsf{VDF}_\mathcal{EC}$, for every small infinite cardinal $\kappa$, the monoid $\invtildestarof\kappa\monster$ is well-defined, and we have isomorphisms
  \[
    \invtildestarof\kappa\monster\cong \invtildestarof\kappa{\keyof(\monster)}\times \invtildestarof\kappa{\Gamof(\monster)}\cong \prod_{\delta(\monster)}^{\le\kappa} \hat \kappa\times \mathscr P_{\le\kappa}(\icsg(\Gamof(\monster)))
  \]
  where  $\delta(\monster)$ is a certain cardinal, and $\prod_{\delta(\monster)}^{\le\kappa} \hat \kappa$ denotes the submonoid of $\prod_{\delta(\monster)} \hat \kappa$ consisting of $\delta(\monster)$-sequences with support of size at most $\kappa$.
\end{alphthm}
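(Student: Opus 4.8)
The plan is to obtain the first displayed isomorphism by composing two reductions, and the second by substituting the already available computations of the domination monoids of $\mathsf{DCF}_0$ and $\mathsf{DOAG}$. For the first isomorphism I would start from Theorem~\ref{thm:diff} --- the version of Theorem~\ref{thm:rvred} for $\invtildestarof\kappa\monster$ that holds in monotone D-henselian differential valued fields with many constants: since $\mathsf{VDF}_\mathcal{EC}$ is the model companion of such a theory, it provides an isomorphism of posets $\invtildestarof\kappa\monster\cong\invtildestarof\kappa{\RVsesof(\monster)}$. I would then record the structure that $\mathsf{VDF}_\mathcal{EC}$ induces on $\RVsesof(\monster)$: the kernel $\keyof(\monster)$ is a model of $\mathsf{DCF}_0$, the quotient $\Gamof(\monster)$ is a pure divisible ordered abelian group, and the underlying short exact sequence of abelian groups is pure, its quotient $\Gamof(\monster)$ being torsion-free. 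Corollary~\ref{co:decLtps} then applies and gives $\invtildestarof\kappa{\RVsesof(\monster)}\cong\invtildestarof\kappa{\keyof_{\mathcal F}(\monster)}\times\invtildestarof\kappa{\Gamof(\monster)}$, where the auxiliary sorts $\mathcal F$ involve only the quotients $\keyof^\times/(\keyof^\times)^n$. These are trivial, as a model of $\mathsf{DCF}_0$ is algebraically closed as a pure field, so $\keyof_{\mathcal F}(\monster)$ is merely $\keyof(\monster)$ with finitely many one-point sorts adjoined; this is immaterial for the domination monoid and, as $\mathsf{DCF}_0$ eliminates imaginaries, harmless otherwise. This yields the first isomorphism at the level of posets.

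For the monoid statement I would verify the hypotheses in the second halves of Theorem~\ref{thm:diff} and Corollary~\ref{co:decLtps}. The tensor product respects $\doms$ in $\keyof(\monster)$ because $\mathsf{DCF}_0$ is stable, and in $\Gamof(\monster)$ because that is the divisible ordered abelian group case, settled in~\cite{dominomin}; hence $\otimes$ respects $\doms$ in $\RVsesof(\monster)$, therefore also in $\monster$, and all the isomorphisms above --- and below --- upgrade to isomorphisms of monoids. To compute the $\Gamof(\monster)$ factor I would apply Corollary~\ref{co:isro}: $\mathsf{DOAG}$ is a regular theory with an archimedean model, and $\Gamof(\monster)/\primo\,\Gamof(\monster)=0$ for every prime $\primo$, so the relevant set of primes is empty and the product over it collapses; using also elimination of imaginaries in $\mathsf{DOAG}$, one is left with $\invtildestarof\kappa{\Gamof(\monster)}\cong\mathscr P_{\le\kappa}(X)$, where $X$ is the set of invariant convex subgroups of $\Gamof(\monster)$.

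To compute the $\keyof(\monster)$ factor I would use that $\mathsf{DCF}_0$ is superstable: every class in $\invtildestarof\kappa{\keyof(\monster)}$ is represented by a tensor product of at most $\kappa$ regular types, each occurring with a multiplicity that is a cardinal $\le\kappa$, and two such products represent the same class exactly when, after identifying non-orthogonal regular types, they carry the same multiplicities. Writing $\delta(\monster)$ for the number of non-orthogonality classes of regular types of $\mathsf{DCF}_0$ over $\monster$, this identifies $\invtildestarof\kappa{\keyof(\monster)}$ with the submonoid $\prod_{\delta(\monster)}^{\le\kappa}\hat\kappa$ of $\prod_{\delta(\monster)}\hat\kappa$. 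Feeding the two factor computations into the first isomorphism then produces the second one.

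I expect the most demanding step to be the explicit description of $\invtildestarof\kappa{\keyof(\monster)}$: the domination-decomposition of a type into regular types must be carried out for $\kappa$-tuples rather than finite ones, keeping track of the (possibly infinite) weights and of the support of the resulting multiplicity function; the uniqueness up to non-orthogonality has to be phrased so as to match $\prod_{\delta(\monster)}^{\le\kappa}\hat\kappa$ precisely --- in particular, products of pairwise non-orthogonal regular types must be shown to dominate one another only in the obvious way --- and $\delta(\monster)$ must be pinned down as the correct cardinal. By contrast, the purity of the sequence underlying $\RVses$, the triviality of the auxiliary sorts $\mathcal F$, and the transfer of well-definedness across the two reductions should be routine.
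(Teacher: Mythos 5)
Your overall plan — reduce to $\RVses$ via Theorem~\ref{thm:diff}, then to $\key$ and $\Gamma$ via the short-exact-sequence machinery, then substitute the known computations for $\mathsf{DCF}_0$ and divisible ordered abelian groups — is the paper's plan, and the two factor computations (Corollary~\ref{co:isro} for $\Gamma$; regular types/$\omega$-stable multidimensionality for $\key$) match what the paper does. But there is a genuine gap in the middle step: you treat $\RVsesof(\monster)$ as a short exact sequence of \emph{abelian groups} with extra structure only on $\key$ and $\Gamma$, arguing purity from torsion-freeness of $\Gamma$ and taking the auxiliary sorts to be the groups $\key^\times/(\key^\times)^n$. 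This overlooks the induced derivation $\partial_\RV$ on the middle sort $\RV$. That map is part of the full induced structure on $\RVsesof(\monster)$, it is not a group homomorphism, and it is not expressible in $L_{\mathrm{ac}}^*$ alone (it acts on $\RV$, not just on $\key$ or $\Gamma$). As a result, $\RVsesof(\monster)$ is \emph{not} an expanded pure short exact sequence of $L_{\mathrm{ab}}$-abelian structures in the sense of Definition~\ref{defin:ses}, and Corollary~\ref{co:decLtps} does not apply to the structure you describe.

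The paper's fix is Lemma~\ref{lemma:constants}: pass to $L_\constants = L_{\mathrm{ab}}\cup\{\constants\}$, adjoining to each of $\key$, $\RV$, $\Gamma$ a unary predicate for the constants (all of $\Gamma$ on the quotient). Then $\partial_\RV$ becomes definable from the $L_\constants$-abelian structure together with the differential field structure on $\key$ and the order on $\Gamma$, so $\RVses$ genuinely is an expanded pure short exact sequence of $L_\constants$-abelian structures. Note that this changes what ``pure'' means: you must check purity of pp $L_\constants$-formulas, not just divisibility. Your argument (torsion-free quotient) only gives purity in $L_{\mathrm{ab}}$; the paper instead observes that in a sufficiently saturated model there is a section $\Gamma\to\RV$ with image inside $\constants$, so the sequence splits as $L_\constants$-abelian structures and Remark~\ref{rem:puresplit} gives purity. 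Finally, with $\mathcal F$ now consisting of pp $L_\constants$-formulas the sorts $\mathrm{A}_\phi$ are not obviously trivial; one disposes of them using elimination of imaginaries in $\mathsf{DCF}_0$, not the triviality of $\key^\times/(\key^\times)^n$. Once these points are repaired, the rest of your argument goes through.
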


The paper is structured as follows. In the first two sections we recall some preliminary notions and facts, and deal with some easy observations about orthogonality of invariant types. In Section~\ref{sec:ro} we prove Theorem~\ref{athm:oagsdom}, while in Section~\ref{sec:ses} we study expanded pure short exact sequences of abelian structures, proving Theorem~\ref{athm:ses}. The results from these two sections are then combined in Section~\ref{sec:owfmig} to deal with the case of ordered abelian groups with finitely many definable convex subgroups. In Section~\ref{sec:bvf} we prove Theorem~\ref{athm:rv}, and illustrate how it may be combined with Theorem~\ref{athm:ses} to obtain statements such as Theorem~\ref{athm:bendec}.  Section~\ref{sec:mixedchar} deals with finitely ramified mixed characteristic henselian valued fields and includes a proof of Theorem~\ref{athm:qp}, and  Section~\ref{sec:diff} deals with the differential case, proving Theorem~\ref{athm:vdf}. 

\section*{Acknowledgements}
RM thanks E.~Hrushovski for the useful discussions around orthogonality of invariant types. We thank A.~Gehret for Remark~\ref{rem:gehretqe}, and the anonymous referee for providing extensive and thorough feedback that helped improve our paper.

\section{Preliminaries}\label{sec:prelim}
\subsection{Notation and conventions}
We adopt the conventions and notations of~\cite[Subsection~1.1]{dominomin} (e.g. we usually (and tacitly) fix  a \emph{monster model} $\monster$, and \emph{definable} means $\monster$-definable), with the following additions and differences.
The set of prime natural numbers is denoted by $\mathbb P$.  Sorts are denoted by upright letters, as in $\mathrm{A}, \K, \key, \Gamma$,  families of sorts by calligraphic letters such as $\mathcal C$, and $S_{{\mathcal C}^{<\omega}}(A)$ stands for the disjoint union of all spaces of types in finitely many variables, each with sort in $\mathcal{C}$. Terms may contain parameters, as in $t(x,d)$; we write $t(x)$ if they do not.

\subsection{Domination}
We assume familiarity with invariant types, and recall some basic definitions and facts about domination. See~\cite[Subsection~1.2]{dominomin}, \cite[Subsection~2.1.2]{mennuni_thesis}, and \cite{invbartheory} for a more thorough treatment.

If $p(x), q(y)\in S(\monster)$, let $S_{pq}(A)$ be the set of    types over $A$ in variables $xy$ extending
    $(p(x)\restr A)\cup (q(y)\restr A)$.   We say that $p(x)\in S(\monster)$ \emph{dominates} $q(y)\in S(\monster)$, and write $p\doms q$, iff there are a small $A\smallsubset \monster$ and 
    $r\in S_{pq}(A)$ such that $p(x)\cup r(x,y)\proves q(y)$. 
 We say that $p,q\in S(\monster)$ are \emph{domination-equivalent}, and write $p\domeq q$, iff $p\doms q$ and $q\doms p$. We denote the
    domination-equivalence class of $p$ by $\class p$.
 The \emph{domination poset} $\invtilde$ is the quotient of $\invtypes(\monster)$ by $\domeq$, equipped with the partial order induced by $\doms$, denoted by the same symbol.  In other words, domination is the semi-isolation counterpart to $\mathrm{F}^\mathrm{s}_{\kappa(\monster)}$-isolation in the sense of~\cite[Chapter IV]{classificationtheoryshelah}; the two notions are distinct, see~\cite[Example~3.3]{wbdmt}.

 We will be mostly concerned with domination on $\invtypes(\monster)$. When describing a witness to $p\doms q$, we  write e.g.~``let $r$ contain $\phi(x,y)$'' with the meaning ``let $r\in S_{pq}(A)$ contain $\phi(x,y)$, for an $A$ such that $p,q\in \invtypes(\monster, A)$''.
By \cite[Lemma~1.14]{invbartheory}, if $p_0, p_1\in\invtypes(\monster)$  and $p_0\doms p_1$, then $p_0\otimes q\doms p_1\otimes q$.
  We say that $\otimes$ \emph{respects} $\doms$ iff $q_0\doms q_1$ implies $p\otimes q_0\doms p\otimes q_1$. If this is the case, the \emph{domination monoid} is the expansion of $\invtilde$ by the operation induced by $\otimes$, also denoted by $\otimes$. If we say \emph{$\invtilde$ is well-defined} (as a partially ordered monoid) we mean ``$\otimes$ respects $\doms$''.  As $\invtilde$ is always well-defined as a poset, this should cause no confusion.

 Adding imaginary sorts to $\monster$ may result in an enlargement of $\invtilde$~\cite[Corollary~3.8]{invbartheory}. Yet, if $T$ eliminates imaginaries, even just geometrically, then the natural embedding $\invtilde\into \invtildeof{\monster^\eq}$ is easily seen to be an isomorphism.
By~\cite[Proposition~1.23]{invbartheory}, domination witnessed by algebraicity  is  compatible with $\otimes$:  if $p,q_0,q_1\in\invtypes(\monster)$ and, for $i<2$, there are realisations $a_i\models q_i$ such that $a_1\in \acl(\monster a_0)$, then for all invariant $p$ we have $p\otimes q_0\doms p\otimes q_1$. In particular, if $T$ has geometric elimination of imaginaries, then $\invtildeof{\monster^\eq}$ is well-defined if and only if $\invtilde$ is.

 Frequently, we will equip a family of sorts, say $\mathcal{A}=\set{\mathrm{A}_s\mid s\in S}$, with the traces of some $\emptyset$-definable relations, and consider it as a standalone structure. We call $\mathcal A$  \emph{fully embedded} iff, for each $\bla s0,n\in S$, every subset of $(\bla {\mathrm{A}}{s_0}\times{s_n})(\monster)$ is definable in $\monster$ if and only if it is definable in $\operatorname{\mathcal{A}}(\monster)$.  When talking of a fully embedded $\mathcal A$ in the abstract, as below, we assume a structure on $\mathcal A$ to be fixed.

\begin{fact}[{\cite[Proposition~2.3.31]{mennuni_thesis}}]\label{fact:fullembemb}
Let $\mathcal{A}$ be a fully embedded family of sorts, and let $\iota: S_{\mathcal A^{<\omega}}(\operatorname{\mathcal{A}}(\monster))\to S(\monster)$ send a type of $\operatorname{\mathcal{A}}(\monster)$ to the unique type of $\monster$ it entails.
The type $p$ is invariant if and only if $\iota(p)$ is. The map $\iota\restr \invtypes(\operatorname{\mathcal{A}}(\monster))$ is an injective $\otimes$-homomorphism  inducing an embedding of posets $\tilde\iota\from\invtildeof{\operatorname{\mathcal A}(\monster)}\into \invtilde$ which,  if $\otimes$ respects $\doms$ in $\monster$ (hence also in $\operatorname{\mathcal A}(\monster)$),  is also an embedding of monoids.
\end{fact}

\begin{rem}\label{Rem:invimply}With the notation and assumptions from Fact~\ref{fact:fullembemb}, if $p$ is an invariant  $\operatorname{\mathcal{A}}(\monster)$-type, $\monster_1\succ \monster$, and $\operatorname{\mathcal{A}}(\monster)\subseteq B\subseteq \operatorname{\mathcal{A}}(\monster_1)$, then $(p\invext B)\vdash (\iota p\invext \monster B)$.
\end{rem}

\begin{proof}Suppose $\phi(x,w,t)\in L(\emptyset)$, $d\in \monster$,  $e\in B$, and $\iota p(x)\invext B\proves \phi(x,d,e)$. Since $x,t$ are $\mathcal A$-variables, and $d\in \monster$, full embeddedness yields an $L_\mathcal A(\operatorname{\mathcal A}(\monster))$-formula $\psi(x,t)$ equivalent to $\phi(x,d,t)$. So $\psi(x,e)\in p\invext B$ and we are done.
\end{proof}

\begin{pr}\label{pr:comptransfer}
Assume for all $p\in \invtypes(\monster)$ there is a tuple  $\tau^p$ of definable functions with codomains in a fully embedded $\mathcal A$ such that $p\domeq \tau^p_*p$ and  $p\otimes q\domeq \tau^p_*p\otimes \tau^q_*q$. If $\otimes$ respects $\doms$ in $\operatorname{\mathcal A}(\monster)$, then $\otimes$ respects $\doms$ in $\monster$. 
\end{pr}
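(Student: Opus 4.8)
Here is the plan. The proposition says exactly that $q_0\doms q_1$ implies $p\otimes q_0\doms p\otimes q_1$ for all $p,q_0,q_1\in\invtypes(\monster)$, so I would use the system of representatives $\tau^r_*r$ to transport the whole situation, via the order-embedding $\bar\iota$ of Fact~\ref{fact:fullembemb}, down to the structure $\operatorname{\mathcal A}(\monster)$ — where $\otimes$ respects $\doms$ by hypothesis — and then transport the resulting domination back up to $\monster$. So fix $p,q_0,q_1\in\invtypes(\monster)$ with $q_0\doms q_1$; the goal is $p\otimes q_0\doms p\otimes q_1$.

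First I would record two routine facts about the representatives $\tau^r_*r$, for $r\in\invtypes(\monster)$. Each is again an invariant type: if $r$ is $A$-invariant and $\tau^r$ is $B$-definable then $\tau^r_*r$ is $AB$-invariant. And each concentrates on a product of sorts from $\mathcal A$, since the components of $\tau^r$ have codomains in $\mathcal A$. Consequently, writing $\iota$ for the map of Fact~\ref{fact:fullembemb}, every $\tau^r_*r$ lies in the image of $\iota$, and I would set $r':=\iota^{-1}(\tau^r_*r)\in\invtypes(\operatorname{\mathcal A}(\monster))$, obtaining in particular types $p',q_0',q_1'$.

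Then I would run the transfer chain. From $\tau^{q_0}_*q_0\domeq q_0\doms q_1\domeq\tau^{q_1}_*q_1$ and transitivity of $\doms$ we get $\tau^{q_0}_*q_0\doms\tau^{q_1}_*q_1$ in $\monster$; since $\bar\iota$ is an order-\emph{embedding}, this pulls back to $q_0'\doms q_1'$ in $\operatorname{\mathcal A}(\monster)$. Now I invoke the hypothesis that $\otimes$ respects $\doms$ in $\operatorname{\mathcal A}(\monster)$ to obtain $p'\otimes q_0'\doms p'\otimes q_1'$ there, push this forward through $\bar\iota$ (which is order-preserving), and use that $\iota$ restricted to invariant types is a $\otimes$-homomorphism, so $\iota(p'\otimes q_i')=\iota(p')\otimes\iota(q_i')=\tau^p_*p\otimes\tau^{q_i}_*q_i$; this yields $\tau^p_*p\otimes\tau^{q_0}_*q_0\doms\tau^p_*p\otimes\tau^{q_1}_*q_1$ in $\monster$. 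Finally, combining this with the assumed equivalences $p\otimes q_i\domeq\tau^p_*p\otimes\tau^{q_i}_*q_i$ and transitivity of $\doms$ once more gives $p\otimes q_0\doms p\otimes q_1$, as required.

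I expect the argument to be essentially bookkeeping, and I do not foresee a genuine obstacle. The one point to get right is that $\bar\iota$ must be used in \emph{both} directions — to pull domination in $\monster$ back down to $\operatorname{\mathcal A}(\monster)$, and then to push the new domination forward again — which is precisely what ``order-embedding'' provides, together with the fact that $\iota$ commutes with $\otimes$ on invariant types. The only other thing meriting a line of justification is that the $\tau^r_*r$ really are invariant types concentrating on sorts from $\mathcal A$, so that $\iota^{-1}$ applies to them and transports $\otimes$ correctly.
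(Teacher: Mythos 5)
Your proof is correct and follows essentially the same route as the paper's, which also transfers $q_0\doms q_1$ to $\tau^{q_0}_*q_0\doms\tau^{q_1}_*q_1$, pulls this down to $\operatorname{\mathcal A}(\monster)$ via the embedding $\bar\iota$ (whose inverse the paper notes is induced by the $\tau$'s), applies the hypothesis there, pushes forward, and sandwiches with the assumed $\domeq$'s. The paper's version is terser — it compresses the pull-down/push-up into a single sentence — while you make the use of invariance and of $\iota$ being a $\otimes$-homomorphism explicit; this is just a difference in level of detail, not of argument.
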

\begin{proof}
We need to show that if $q_0\doms q_1$ then $p\otimes q_0\doms p\otimes q_1$.  By assumption,  $p\otimes q_0\domeq \tau^p_*p\otimes \tau^{q_0}_*q_0$ and $\tau^p_*p\otimes \tau^{q_1}_*q_1\domeq p\otimes q_1$. Since  $\otimes$ respects $\doms$ in $\operatorname{\mathcal A}(\monster)$, we obtain $\tau^p_*p\otimes \tau^{q_0}_*q_0\doms\tau^p_*p\otimes \tau^{q_1}_*q_1$, and we are done.
\end{proof}
Note that a map $\tau$ as above induces an inverse of $\tilde \iota$.

\subsection{A word on \textasteriskcentered-types}\label{subsec:startypes}
 We will deal with types in a small infinite number of variables, also known in the literature as \textasteriskcentered-types. We define $\invtildestarof\kappa\monster$ as the quotient of $S_{<\kappa^+}(\monster)$ by $\domeq$. Note that, by padding with realised coordinates and permuting variables, every $\domeq$-class has a representative with variables indexed by $\kappa$. We leave to the reader easy tasks such as defining the $\alpha$-th power $p^{(\alpha)}$, for $\alpha$ an ordinal, or such as convincing themself that basic statements such as Fact~\ref{fact:fullembemb} generalise.

Nevertheless, it is not clear if well-definedness of $\invtilde$ implies well-definedness of $\invtildestarof\kappa\monster$ (the converse is easy): for instance, at least a priori, one could have a situation where the finitary $\invtilde$ is well-defined, but there are $1$-type $q_0$ and a $\kappa$-type $q_1$ such that $q_0\doms q_1$ but, for some $p$, we have $p\otimes q_0\ndoms p\otimes q_1$. In the rest of the paper we will say e.g.~``$\otimes$ respects $\doms$'' with the understanding that, whenever \textasteriskcentered-types are involved, this is to be read as ``$\otimes$ respects $\doms$ on \textasteriskcentered-types''.
 \begin{question}\label{q:star}
If $\otimes$ respects $\doms$ on finitary types, does $\otimes$ 
respect $\doms$ on \textasteriskcentered-types?
\end{question}

\section{Orthogonality}
\begin{defin} We say that $p,q\in S(A)$ are \emph{weakly orthogonal}, and write $p\wort q$, iff $p(x)\cup q(y)$ implies a complete $xy$-type over $A$.
 We say that $p,q\in \invtypes(\monster)$ are \emph{orthogonal}, and write $p\perp q$, iff $(p\invext B)\wort (q\invext B)$ for every $B\supseteq \monster$.
 Two definable sets $\phi, \psi$ are \emph{orthogonal} iff for every $n,m\in \omega$, every $p\in S_{\phi^n}(\monster)$ and $q\in S_{\psi^m}(\monster)$, we have $p\wort q$.
 Two families of sorts $\mathcal{A}$, $\mathcal{C}$  are \emph{orthogonal} iff every cartesian product of sorts in $\mathcal{A}$ is orthogonal to every cartesian product of sorts in $\mathcal{C}$.
\end{defin}
It is easily seen that if $p,q\in \invtypes(\monster, M)$ are weakly orthogonal and $\monster_1\succ\monster$ is $\abs  M^+$-saturated and $\abs  M^+$-strongly homogeneous, then $(p\invext \monster_1)\wort(q\invext \monster_1)$.  This can fail for arbitrary $B\supseteq \monster$, i.e.~weak orthogonality is indeed weaker than orthogonality. While this is folklore (the second author thanks E.~Hrushovski for pointing this out), we could not find any example in print, so we  record one.
 \begin{eg}
There is a theory with invariant $p,q$ such that  $p\wort q$ but $p\centernot\perp q$.
 \end{eg}
\begin{proof}
Let $L$ be a two-sorted language with sorts $\mathrm{P},\mathrm{O}$ (points, orders) and a relation symbol $x<_ty$ of arity $\mathrm{P}^2\times \mathrm{O}$. The class $K$ of  finite $L$-structures where, for every $d\in \mathrm{O}$, the relation $x<_dy$ is a linear order, is a (strong) amalgamation class. Let $T$ be the theory of the  Fra\"iss\'e limit of $K$. Fix a small  $M\models T$, and let $p,q$ be the $1$-types of sort $\mathrm{P}$ defined as
$p(x)=\set{m<_dx<_de\mid d\in \operatorname{O}(\monster), m\in M, e\in \operatorname{P}(\monster), e>M }$ and $q(y)\coloneqq\set{e<_dy\mid d\in \operatorname{O}(\monster), e\in \operatorname{P}(\monster)}$.
By quantifier elimination $p$, $q$ are complete, $p$ is $M$-invariant, and  $q$ is $\emptyset$-definable, hence $\emptyset$-invariant.

Since $M$ is small, for every $d\in \operatorname{O}(\monster)$ it is $<_d$-bounded, hence   $p\wort q$. Let $b$ be a point of sort $\mathrm{O}$ such that $M$ is $\le_b$-cofinal in $\monster$, and set $B\coloneqq \monster b$. Then $(q(y)\invext B)\proves y\ge_b \operatorname{P}(\monster)$ and $(p(x)\invext B)\proves x\ge_b \operatorname{P}(\monster)$, and both $x<_by$ and $y<_bx$  are consistent with $(p(x)\invext B)\cup (q(y)\invext B)$, which is therefore not complete.
 \end{proof}
\begin{rem}\label{rem:wort}
If $p\in S(A)$ is such that $p\wort p$, then $p$ is realised in $\dcl(A)$. 
 If $p,q\in \invtypes(\monster)$ and $p\wort q$, then $p(x)\otimes q(y)=q(y)\otimes p(x)$: they both coincide with (the unique completion of) $p(x)\cup q(y)$.
 Two definable sets $\phi, \psi$ are orthogonal if and only if every definable subset of $\phi^m(x)\land \psi^n(y)$ can be defined by a finite disjunction of formulas of the form $\theta(x)\land \eta(y)$.
If two $M$-definable sets are orthogonal, then the definition of orthogonality still holds after replacing $\monster$ with $M$.
Adding imaginaries preserves orthogonality, in the following sense.
Let $\mathcal{A}$ be a family of sorts, and let $\tilde{\mathcal{A}}$ be a larger family, consisting of $\mathcal{A}$ together with imaginary sorts obtained as definable quotients of products of elements of $\mathcal{A}$. Let $\tilde{\mathcal{C}}$ be obtained similarly from another family of sorts $\mathcal{C}$. If $\mathcal{A}$ and $\mathcal{C}$ are orthogonal, then so are $\tilde{\mathcal{A}}$ and $\tilde{\mathcal{C}}$.
\end{rem}
By~\cite[Proposition~3.13]{invbartheory}, if 
$p_0\doms p_1$ and $p_0\wort q$, then $p_1\wort q$. In particular, if $p_0\doms q$ and $p_0\wort q$, then $q$ is realised.
As a consequence, $\wort$ induces a well-defined relation on the domination poset, which we may  expand to $(\invtilde, \doms, \wort)$. By~\cite[Proposition~2.3.31]{invbartheory} the map $\tilde\iota$ from Fact~\ref{fact:fullembemb} is a homomorphism for both $\wort$ and $\nwort$. We prove the analogous statements for orthogonality.
 \begin{pr}\label{pr:perpdoms}
   Let $p_0,p_1,q\in \invtypes(\monster)$. If $p_0\perp q$ and $p_0\doms p_1$, then $p_1\perp q$. In particular, $\perp$ induces a well-defined relation on $\invtilde$.  
 \end{pr}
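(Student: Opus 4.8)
The plan is to mimic the proof of Fact~\ref{fact:wortdown}, upgrading it from weak orthogonality over $\monster$ to full orthogonality, by exploiting the fact that domination is witnessed by a \emph{small} amount of data and therefore ``survives'' base change to any $B\supseteq\monster$. First I would unwind the definitions: to show $p_1\perp q$ I must show $(p_1\invext B)\wort(q\invext B)$ for every $B\supseteq\monster$. Fix such a $B$ (living in $\monster_1$). The hypothesis $p_0\doms p_1$ gives a small $A\smallsubset\monster$ with $p_0,p_1\in\invtypes(\monster,A)$ and an $r\in S_{p_0p_1}(A)$ with $p_0(x)\cup r(x,y)\proves p_1(y)$.

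The key point is that domination is preserved under extending the parameter set: I claim $(p_0\invext B)\doms(p_1\invext B)$, witnessed by the \emph{same} $r\in S_{p_0p_1}(A)$, now viewed as a (small) partial type over $B$. Indeed, $p_0(x)\cup r(x,y)\proves p_1(y)$ is a statement about a small fragment, and since $p_0\invext B$ and $p_1\invext B$ are the canonical $A$-invariant extensions, a standard compactness/invariance argument shows $(p_0\invext B)(x)\cup r(x,y)\proves(p_1\invext B)(y)$; moreover $r$ still extends $(p_0\invext B)\restr A\cup(p_1\invext B)\restr A$ since restrictions to $A$ are unchanged. (This is exactly the sort of routine verification the paper elsewhere leaves to the reader.) Now apply Fact~\ref{fact:wortdown} \emph{inside the monster model $\monster_1$}, with $\monster$ there replaced by $B$: from $(p_0\invext B)\doms(p_1\invext B)$ and $(p_0\invext B)\wort(q\invext B)$ (the latter from $p_0\perp q$) we conclude $(p_1\invext B)\wort(q\invext B)$. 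As $B$ was arbitrary, $p_1\perp q$.

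For the ``in particular'' clause: if $p_0\domeq p_0'$, then $p_0\doms p_0'$ and $p_0'\doms p_0$, so by the first part $p_0\perp q$ iff $p_0'\perp q$; symmetrically in the second coordinate, so $\perp$ factors through $\domeq$ in each argument and hence descends to a well-defined relation on $\invtilde\times\invtilde$.

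The main obstacle is the base-change claim $(p_0\invext B)\doms(p_1\invext B)$: one must check that the implication $p_0\cup r\proves p_1$ really does transfer to $B$, which requires knowing that the canonical extension $p_i\invext B$ interacts correctly with the small witness $r$ — i.e. that no new instances of formulas over $B\setminus\monster$ can destroy the entailment. This is where $A$-invariance of the extensions and saturation of $\monster_1$ are used; everything else is bookkeeping. (An alternative, slightly slicker route avoids isolating this as a lemma: argue directly that any completion of $(p_1\invext B)(y)\cup(q\invext B)(z)$ is determined, by first realising $y$ via a realisation of $x$ using the domination witness $r$, then using weak orthogonality of $p_0\invext B$ and $q\invext B$ to pin down the $xz$-type, hence the $yz$-type. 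But this reproves Fact~\ref{fact:wortdown} rather than citing it, so the factored version above is cleaner.)
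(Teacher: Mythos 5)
Your proof is correct and follows essentially the same route as the paper's: the core of the argument is that the small domination witness $r$ and the $A$-invariance of $p_0,p_1$ make the entailment $p_0\cup r\proves p_1$ persist after base-changing to any $B\supseteq\monster$, after which weak orthogonality of $p_0\invext B$ and $q\invext B$ pins down the $p_1$-part. The paper takes the ``direct'' route you sketch in your parenthetical at the end: it cites \cite[Lemma~1.13]{invbartheory} for the base-change claim (both over $B$ and, crucially, again over $Bc$ for a realisation $c$ of $q\invext B$) rather than packaging the second half as ``Fact~\ref{fact:wortdown} with $\monster$ replaced by $B$''. Your packaged version is not wrong, but note that Fact~\ref{fact:wortdown} as stated is about global types, so invoking it over $B$ is itself a relativisation that one would have to justify by unwinding its proof — which is exactly the realisation-plus-Lemma~1.13 argument the paper spells out — so the factoring doesn't actually save work; in particular, the step ``pin down the $xz$-type, hence the $yz$-type'' in your alternative sketch is where the second application of Lemma~1.13 (over $Bc$) is hiding, and it is worth making that explicit rather than treating it as immediate.
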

 \begin{proof}
Fix $r$ witnessing $p_0\doms p_1$ and  let $B\supseteq \monster$. 
    Let $b\models p_1\invext B$ and $c\models q\invext B$. By~\cite[Lemma~1.13]{invbartheory} $(p_0\invext B)\cup r \proves (p_1\invext B)$. Let $a$ be such that $ab\models (p_0\invext B)\cup r$. Since $p_0\perp q$, we have $(p_0\invext B)\wort (q\invext B)$, hence $a\models p_0\invext Bc$. Again by~\cite[Lemma~1.13]{invbartheory} we have $(p_0\invext Bc)\cup r\proves (p_1\invext Bc)$, therefore $b\models p_1\invext Bc$.
  \end{proof}
   \begin{pr}\label{pr:perphom}
In the setting of Fact~\ref{fact:fullembemb},  $\iota\restr\invtypes _{\mathcal A^{<\omega}}(\mathcal A(\monster))$ is  a $\perp$-homomorphism and a $\centernot \perp$-homomorphism, and so is the induced map $\tilde \iota\from \invtildeof{\operatorname{\mathcal A}(\monster)}\into \invtilde$.
\end{pr}
\begin{proof}
Let $p,q\in \invtypes_{\mathcal A^{<\omega}}(\mathcal A(\monster))$ be orthogonal and let $\monster_1\succ \monster$ be $\abs{\monster}^+$-saturated and $\abs{\monster}^+$-strongly homogeneous. We show that, for $\phi(x,y,z)\in L(\monster)$ and $d\in \monster_1$, if  $(\iota p (x)\otimes \iota q(y))\invext \monster_1\proves \phi(x,y,d)$ then 
 $(\iota p\invext \monster d)(x)\cup (\iota q\invext \monster d)(y)\proves \phi(x,y,d)$. By full embeddedness, there are $\chi(x,y,w)\in L_{\mathcal A}(\operatorname{\mathcal A}(\monster))$ and $e\in \operatorname{\mathcal A}(\monster_1)$ such that $\monster_1\models \forall x,y\; (\chi(x,y,e)\coimplica \phi(x,y,d))$. 
Because  $(p\invext \operatorname{\mathcal A}(\monster) e) \wort (q\invext \operatorname{\mathcal A}(\monster) e)$,  there are $\theta_p(x,w), \theta_q(y,w)\in L_{\mathcal A}(\operatorname{\mathcal A}(\monster))$ such that $(p\invext \operatorname{\mathcal A}(\monster) e)\proves \theta_p(x,e)$, $(q\invext \operatorname{\mathcal A}(\monster) e)\proves \theta_q(y,e)$, and $\operatorname{\mathcal A}(\monster_1)\models \forall x,y\; ((\theta_p(x,e)\land \theta_q(y,e))\implica \chi(x,y,e))$. By invariance of $p, q$,
\begin{gather*}
  \pi_p(x)\coloneqq\set{\theta_p(x, e')\mid e'\in \monster_1, e\equiv_{\monster d} e'}\subseteq \iota p\invext \monster_1\\
  \pi_q(y)\coloneqq\set{\theta_q(y, e')\mid e'\in \monster_1, e\equiv_{\monster d} e'}\subseteq \iota q\invext \monster_1
\end{gather*}
So $\pi_p$, $\pi_q$ are consistent. As $\aut(\monster_1/\monster d)$ fixes them, they are equivalent to partial types $\sigma_p$, $\sigma_q$ over $\monster d$. 
But  $\sigma_p\subseteq \iota p\invext \monster d$, $\sigma_q\subseteq \iota q\invext \monster d$, and $\sigma_p(x)\cup \sigma_q(y)\proves \phi(x,y,d)$, proving that $\perp$ is preserved.

Suppose  there is $B$ with $\mathcal A(\monster)\subseteq B\subseteq \mathcal A(\monster_1)$ such that $(p\invext B)\nwort (q\invext B)$. By Remark~\ref{Rem:invimply}, this yields $(\iota p\invext \monster B)\nwort (\iota q\invext \monster B)$, proving that $\centernot \perp$ is preserved as well.

The statement for $\tilde\iota$ follows from Proposition~\ref{pr:perpdoms}.
\end{proof}

\begin{lemma}\label{lemma:opd}
  Let $p,q_0,q_1\in S(\monster)$, with  $p\wort q_0$ and  $(p(x)\cup q_0(y))\doms q_1(z)$, witnessed by $r\in S_{p\otimes q_0, q_1}(M)$. If $(r\restr x)\wort (r\restr yz)$,  then $q_0\doms q_1$, witnessed by $r\restr yz$. Hence, if $\mathcal{A}$, $\mathcal{C}$ are orthogonal families of sorts, $p\in \invtypes_{\mathcal{A}^{<\omega}}(\monster)$, and $q_0, q_1\in \invtypes_{\mathcal{C}^{<\omega}}(\monster)$, if $(p\cup q_0)\doms q_1$, then $q_0\doms q_1$.
  \end{lemma}
\begin{proof}Routine, left to the reader.
\end{proof}
Recall that the product $\prod_{i\in I}P_i$ of a family of posets $(P_i, \le_i)_{i\in I}$  is the cartesian product of the $P_i$ partially ordered by $(p_i)_{i\in I}\le (q_i)_{i\in I}$ iff $\forall i\in I\; p_i\le_i q_i$.
\begin{co}\label{co:orthsortsiso}
  Suppose that $\mathcal{A}$, $\mathcal{C}$ are orthogonal, fully embedded families of sorts. 
  Assume that for every  $p\in \invtypes(\monster)$ there are some $p_\mathcal{A}\in \invtypes_{\mathcal{A^{<\omega}}}(\monster)$ and $p_\mathcal{C}\in \invtypes_{\mathcal{C^{<\omega}}}(\monster)$ such that $p\domeq p_\mathcal{A}\cup p_\mathcal{C}$. Then the map $\class p\mapsto(\class {p_\mathcal{A}}, \class{p_\mathcal{C}})$ is an isomorphism of posets $\invtilde\to \invtildeof{\operatorname{\mathcal A}(\monster)}\times \invtildeof{\operatorname{\mathcal C}(\monster)}$. Moreover, if $\otimes$ respects $\doms$ in $\monster$ (hence also in $\operatorname{\mathcal A}(\monster)$, $\operatorname{\mathcal C}(\monster)$), then this is also an isomorphism of monoids.
\end{co}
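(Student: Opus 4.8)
The plan is to show that the map $\Phi\from\invtilde\to \invtildeof{\operatorname{\mathcal A}(\monster)}\times \invtildeof{\operatorname{\mathcal C}(\monster)}$ given by $\class p\mapsto(\class{p_\mathcal{A}},\class{p_\mathcal{C}})$ is well-defined, order-preserving, order-reflecting, bijective, and finally a monoid homomorphism under the extra hypothesis. First I would observe that by Remark~\ref{rem:wort}\eqref{point:wortanymodel} orthogonality of $\mathcal A$ and $\mathcal C$ passes to any model, so that for invariant types $a\in\invtypes_{\mathcal{A}^{<\omega}}(\monster)$ and $c\in\invtypes_{\mathcal{C}^{<\omega}}(\monster)$ we always have $a\wort c$; in particular $a\cup c$ is already a complete type and, by Remark~\ref{rem:wort}, $a\otimes c=c\otimes a=a\cup c$. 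This is the workhorse fact. Well-definedness of $\Phi$ then follows because a decomposition $p\domeq p_\mathcal A\cup p_\mathcal C$ with $p_\mathcal A$, $p_\mathcal C$ in the respective sorts is unique up to $\domeq$: if $p_\mathcal A\cup p_\mathcal C\domeq p'_\mathcal A\cup p'_\mathcal C$ then pushing forward along the coordinate projections (which are $\mathcal A$- resp.\ $\mathcal C$-definable) and using that domination is preserved by pushforwards together with Corollary~\ref{co:orthsorts} yields $p_\mathcal A\domeq p'_\mathcal A$ and $p_\mathcal C\domeq p'_\mathcal C$. (One must check the pushforward of $p_\mathcal A\cup p_\mathcal C$ onto the $\mathcal A$-coordinates is exactly $p_\mathcal A$, which is immediate since the two types are disjoint in their variables.)

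For the order-theoretic content: $\Phi$ preserves $\doms$ because if $p\doms q$ then, writing $p\domeq p_\mathcal A\cup p_\mathcal C$ and $q\domeq q_\mathcal A\cup q_\mathcal C$, we get $p_\mathcal A\cup p_\mathcal C\doms q_\mathcal A\cup q_\mathcal C\doms q_\mathcal A$ (the last step by pushing forward along the $\mathcal A$-projection), and then Corollary~\ref{co:orthsorts}, applied with the roles of $\mathcal A$ and $\mathcal C$ as in its statement, gives $p_\mathcal A\doms q_\mathcal A$; symmetrically $p_\mathcal C\doms q_\mathcal C$, so $\Phi(\class p)\le\Phi(\class q)$ in the product order. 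Conversely, if $p_\mathcal A\doms q_\mathcal A$ and $p_\mathcal C\doms q_\mathcal C$, then since $p_\mathcal A$ is invariant Fact~\ref{fact:rightenough} gives $p_\mathcal A\otimes p_\mathcal C\doms q_\mathcal A\otimes p_\mathcal C$ and then $q_\mathcal A\otimes p_\mathcal C\doms q_\mathcal A\otimes q_\mathcal C$ by Fact~\ref{fact:rightenough} again after commuting (legitimate by weak orthogonality, using Remark~\ref{rem:wort}); since $p_\mathcal A\otimes p_\mathcal C=p_\mathcal A\cup p_\mathcal C\domeq p$ and likewise on the $q$ side, we conclude $p\doms q$, so $\Phi$ reflects the order. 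Surjectivity is clear: given $(\class a,\class c)$ with $a$ in $\mathcal A$-sorts and $c$ in $\mathcal C$-sorts, the type $a\cup c$ maps to it, and it lies in $\invtypes(\monster)$ because $a\otimes c$ is invariant. Injectivity is the content of order-reflection together with well-definedness.

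For the monoid statement, assume $\otimes$ respects $\doms$. It suffices to check $\Phi$ is multiplicative, i.e.\ that $(p\otimes q)_\mathcal A\domeq p_\mathcal A\otimes q_\mathcal A$ and similarly for $\mathcal C$. Using $p\domeq p_\mathcal A\cup p_\mathcal C$, $q\domeq q_\mathcal A\cup q_\mathcal C$ and that $\otimes$ respects $\doms$, we have $p\otimes q\domeq(p_\mathcal A\cup p_\mathcal C)\otimes(q_\mathcal A\cup q_\mathcal C)$; now expand using associativity and the commutation $p_\mathcal C\otimes q_\mathcal A=q_\mathcal A\otimes p_\mathcal C$ (weak orthogonality) to rearrange this global type as $(p_\mathcal A\otimes q_\mathcal A)\cup(p_\mathcal C\otimes q_\mathcal C)$, which exhibits a decomposition of $p\otimes q$ into $\mathcal A$- and $\mathcal C$-parts; by the uniqueness of such decompositions up to $\domeq$ established above, $(p\otimes q)_\mathcal A\domeq p_\mathcal A\otimes q_\mathcal A$ and $(p\otimes q)_\mathcal C\domeq p_\mathcal C\otimes q_\mathcal C$, so $\Phi(\class p\otimes\class q)=\Phi(\class p)\otimes\Phi(\class q)$. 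The main obstacle I anticipate is the careful bookkeeping in this last rearrangement: one has to verify that the product of two weakly orthogonal types, restricted/pushed forward onto each group of variables, really does recover the factors, and that associativity plus the commutation identity of Remark~\ref{rem:wort} suffice to identify $(p_\mathcal A\cup p_\mathcal C)\otimes(q_\mathcal A\cup q_\mathcal C)$ with $(p_\mathcal A\otimes q_\mathcal A)\cup(p_\mathcal C\otimes q_\mathcal C)$ on the nose — this needs $p_\mathcal A$ (or $q_\mathcal A$) invariant so that $\otimes$ is defined, and a short argument that tensoring respects the partition of variables into orthogonal blocks.
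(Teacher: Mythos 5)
Your proof is correct, and the underlying ingredients are the same as the paper's (Corollary~\ref{co:orthsorts}, the commutation $a\otimes c=c\otimes a=a\cup c$ for weakly orthogonal types, Fact~\ref{fact:rightenough}), but you take a more hands-on route. The paper first invokes Fact~\ref{fact:fullembemb} to obtain an embedding of posets $\invtildeof{\operatorname{\mathcal A}(\monster)}\times \invtildeof{\operatorname{\mathcal C}(\monster)}\into\invtilde$, and then only needs to verify that the candidate inverse $\class p\mapsto(\class{p_\mathcal A},\class{p_\mathcal C})$ is well-defined and order-preserving; order-reflection and injectivity come for free from the existence of that embedding. For the ``moreover'' part the paper is essentially a one-liner: it cites Fact~\ref{fact:fullembemb}\eqref{point:embmon} applied to the fully embedded family $\mathcal{AC}$, so that the embedding is automatically a monoid embedding once $\otimes$ respects $\doms$. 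You instead verify by hand that $\Phi$ is order-preserving, order-reflecting, bijective, and multiplicative, with the multiplicativity established by the rearrangement $(p_\mathcal A\cup p_\mathcal C)\otimes(q_\mathcal A\cup q_\mathcal C)=(p_\mathcal A\otimes q_\mathcal A)\cup(p_\mathcal C\otimes q_\mathcal C)$ followed by uniqueness of decompositions. That rearrangement is legitimate (associativity plus the fact that weakly orthogonal invariant types commute under $\otimes$), and the uniqueness argument is exactly the well-definedness you established earlier, so the proof closes. The trade-off: your version is more self-contained and makes the multiplicativity mechanism explicit, whereas the paper's is shorter because it delegates the monoid claim to the general machinery of Fact~\ref{fact:fullembemb}. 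One small cosmetic point: in your well-definedness step the appeal to ``pushforwards along coordinate projections'' is unnecessary — the inequality $p'_\mathcal A\cup p'_\mathcal C\doms p'_\mathcal A$ is immediate from the definition of domination (take $r$ to be the identity type on the $\mathcal A$-variables).
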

\begin{proof}
  Fact~\ref{fact:fullembemb} yields embeddings of posets $\invtildeof{\operatorname{\mathcal A}(\monster)}\into \invtilde$ and $\invtildeof{\operatorname{\mathcal C}(\monster)}\into \invtilde$, yielding a morphism of posets  $\invtildeof{\operatorname{\mathcal A}(\monster)}\times \invtildeof{\operatorname{\mathcal C}(\monster)}\into\invtilde$, which is injective by orthogonality. It is therefore enough to show that the natural candidate for its inverse,  $\class p\mapsto(\class {p_\mathcal{A}}, \class{p_\mathcal{C}})$, is well-defined and a morphism of posets.
Both these statements follow from the observation that, if $(p_\mathcal{A}\cup p_\mathcal{C})\domeq p\doms  q\domeq (q_\mathcal{A}\cup q_\mathcal{C})$, then by Lemma~\ref{lemma:opd} we must have $p_\mathcal{A}\doms q_\mathcal{A}$ and $p_\mathcal{C}\doms q_\mathcal{C}$ The ``moreover'' part follows from Fact~\ref{fact:fullembemb}, and the fact that $\mathcal{AC}$ is fully embedded.
\end{proof}
 \begin{eg}\label{eg:disjunion}
   Let $A$, $C$ be structures in disjoint languages,  $T$ the theory of their disjoint union, in families of sorts $\mathcal{A},\mathcal{C}$. Then $\mathcal{A}$ and $\mathcal{C}$ are orthogonal, and invariant types in $\mathcal A$ are orthogonal to those in $\mathcal C$. So,  $\invtilde$ is isomorphic to $\invtildeof{\operatorname{\mathcal A}(\monster)}\times \invtildeof{\operatorname{\mathcal C}(\monster)}$, and is well-defined as a monoid if and only if both factors are.  
 \end{eg}
  Orthogonality is preserved by the Morley product. The proof is folklore, and essentially the same as in the stable case, but we record it here for convenience.
 \begin{pr}\label{pr:otimesperp}
If $p_0,p_1\in \invtypes(\monster)$ are orthogonal to $q$, then so is $p_0\otimes p_1$. 
 \end{pr}
 \begin{proof}
   Let $ab\models p_0\otimes p_1$ and $c\models q$. Because $p_1\perp q$ we have $c\models q\invext \monster b$, and by definition of $\otimes$ we have $a\models p_0\invext \monster b$. Since $p_0\perp q$, this entails $c\models q\invext \monster ab$.
 \end{proof}

 \section{Regular ordered abelian groups}\label{sec:ro}
 In this section we study the domination monoid in certain theories of (linearly) ordered abelian groups, henceforth \emph{oags}. Model-theoretically, the simplest oags are the (nontrivial) divisible ones. Their theory is o-minimal and  their domination monoid was one of the first ones to be computed~\cite{hhm, dominomin}. It is isomorphic to the finite powerset semilattice $(\pfin(\icsg(\monster)), \cup,  \subseteq)$,  of the set of invariant convex subgroups of $\monster$, and weakly orthogonal classes of types correspond to disjoint finite sets. 
Divisible oags  eliminate quantifiers in the language $L_\mathrm{oag}\coloneqq\set{+,0,-,<}$. In this section we compute the domination monoid in the next simplest case.
 
\begin{defin}
A (nontrivial) oag is \emph{discrete} iff it has a minimum positive element, and \emph{dense} otherwise. We view an oag $M$ as a structure in the \emph{Presburger language} 
     $L_\mathrm{Pres}\coloneqq \set{+,0,-,<,1,\equiv_n\mid
       n\in\omega}$ by interpreting $+,0,-,<$ in the
     natural way, $1$ as the minimum positive element if $M$ is discrete and as $0$ otherwise, and $\equiv_n$ as congruence modulo $nM$.
An oag is \emph{regular} iff it eliminates quantifiers in $L_\mathrm{Pres}$.
 \end{defin}
 
 \begin{fact}[{\cite{robzak, zakon, conrad, weispfenning, choagqe}}]\label{fact:regoag}
   For an oag $M$, the following are equivalent.
   \begin{enumerate}
   \item $M$ is regular.
   \item The only definable convex subgroups of $M$ are $\set 0$ and $M$.
   \item The theory of $M$ has an archimedean model.
   \item \label{point:intdiv} For every $n>1$, if the interval $[a,b]$ contains at least $n$ elements, then it contains an element divisible by $n$.
   \item Every quotient of $M$ by a nontrivial convex subgroup is divisible.
   \end{enumerate}
 \end{fact}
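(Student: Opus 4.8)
All the equivalences are classical (Robinson--Zakon, Zakon and Conrad for (2)--(5); the equivalence with (1) belongs to the quantifier-elimination theory of ordered abelian groups, due to Weispfenning and refined later), so the plan is really just to reassemble them. I would close the cycle $(1)\Rightarrow(2)\Rightarrow(5)\Rightarrow(4)\Rightarrow(1)$ and then locate (3) via the cheap $(3)\Rightarrow(2)$ together with $(1)\Rightarrow(3)$. Three tools recur: H\"older's theorem (an archimedean oag is $L_\mathrm{oag}$-embeddable in $(\mathbb R,+,0,-,<)$, hence has no proper nontrivial convex subgroup whatsoever); elementary manipulation of convex subgroups; and a back-and-forth argument for quantifier elimination.

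The transfers off (1) and (3) are quick. For $(3)\Rightarrow(2)$: for each fixed formula $\phi(x,y)$ the assertion ``for all $b$, if $\phi(M,b)$ defines a convex subgroup then it is $\set{0}$ or $M$'' is first-order; it holds in any archimedean model by H\"older, hence in $M$, which is elementarily equivalent to one; ranging over $\phi$ gives (2). For $(1)\Rightarrow(2)$: by quantifier elimination a definable convex subgroup is defined by a quantifier-free $L_\mathrm{Pres}$-formula with parameters, and a short case analysis shows that such a set, being convex and containing $0$, is cut out by linear inequalities alone (a genuine congruence condition either breaks convexity or, using $0$ in the set, reduces to a vacuously true $x\equiv_n 0$); it is therefore an interval symmetric about $0$, and such an interval that happens to be a subgroup is $\set{0}$ if bounded and $M$ if unbounded.

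The block $(2)\Leftrightarrow(4)\Leftrightarrow(5)$ is pure, if slightly fiddly, group theory, which I would treat largely as a black box. The implication $(4)\Rightarrow(5)$ is immediate: given a nontrivial convex $C$, a prime $p$ and $g\in M$, choose $0<c_0\in C$, set $c:=(p-1)c_0\in C$, note $[g-c,g+c]$ has at least $p$ elements, so by (4) contains some $ph\in pM$, and then $ph-g\in[-c,c]\subseteq C$, so $g+C$ is $p$-divisible in $M/C$. For $(5)\Rightarrow(4)$ I would induct along the chain of convex subgroups, the archimedean base case being handled through H\"older and the fact that $nM$ is a non-cyclic subgroup of $\mathbb R$, hence dense, hence meets every nondegenerate interval. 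And $(2)\Leftrightarrow(4)$ is the Robinson--Zakon argument: from a failure of (4) for some $n$ one reads off the definable set of widths of $nM$-free intervals and argues that it is a proper nontrivial convex subgroup, contradicting (2) --- the one point genuinely needing care being that this convex set is in fact closed under addition and is not all of $M$.

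The substantial part, and what I expect to be the main obstacle, is $(4)\Rightarrow(1)$, with $(1)\Rightarrow(3)$ riding on it. For $(4)\Rightarrow(1)$ I would prove quantifier elimination by a back-and-forth between $\abs{L_\mathrm{Pres}}^+$-saturated models $M_1\equiv M_2$ of the regular-oag axioms, extending a small partial $L_\mathrm{Pres}$-isomorphism one element at a time: to adjoin a new $a$ one must realise in $M_2$ the cut that $a$ determines over the current small domain together with the congruences $a\equiv_n r_n$ it satisfies, and condition (4) --- read as the first-order schema ``between sufficiently many consecutive candidates there is a solution of a prescribed congruence'' --- is exactly what makes this system finitely satisfiable, hence realised by saturation. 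The work lies in the discrete case, where $1$ is a genuine element and cosets mod $n$ must be tracked around it, and in reconciling congruences at different moduli via the Chinese Remainder Theorem inside $M_2$. Finally $(1)\Rightarrow(3)$: quantifier elimination pins $\Th(M)$ down by whether $M$ is discrete and by the invariants $\dim_{\mathbb F_p}(M/pM)\in\set{0,1,2,\dots}\cup\set{\infty}$, and each admissible such datum is realised by a subgroup of $(\mathbb R,+,0,-,<)$ (every torsion-free abelian group admits an archimedean order, and one prescribes density and the $p$-ranks by the choice of group --- e.g.\ realising a dense group with all $p$-ranks equal to $1$ as $\mathbb Z\sqrt2+\mathbb Q\subseteq\mathbb R$, and taking direct sums or inverting primes to adjust the invariants); this produces an archimedean model of $\Th(M)$.
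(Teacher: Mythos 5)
The paper states this as a Fact cited to the literature without supplying a proof, so there is no argument of its own to compare with; the question is simply whether your reconstruction from the classical sources is sound. Your cycle of implications does close, and the places you flag as carrying the real weight --- the Robinson--Zakon construction of a definable convex subgroup from a failure of (4), with its non-obvious closure under addition, and the saturated back-and-forth for $(4)\Rightarrow(1)$, with the discrete case and the Chinese Remainder Theorem --- are indeed the technical core; you identify the right pain points there.

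Two small corrections. For $(5)\Rightarrow(4)$: the chain of convex subgroups is in general not well-ordered, so ``inducting along'' it is not literally available; the argument is rather a one-step reduction. Let $C'$ be the largest convex subgroup not containing $b-a$ and $C$ the smallest containing it, so $C/C'$ is archimedean. If $C'\neq\set{0}$, then $M/C'$ is divisible, hence dense, and any $\bar g=n\bar h\in(\bar a,\bar b)$ lifts to $nh\in(a,b)$. If $C'=\set{0}$, then (when $C\subsetneq M$) divisibility of $M/C$ locates the coset $nM\cap(a+C)$, after which one works inside the archimedean $C$. And in that archimedean step, $nM$ need not be non-cyclic: the discrete archimedean case $M\cong\mathbb Z$ must be handled separately by pigeonhole among $n$ consecutive integers. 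Finally, ``every torsion-free abelian group admits an archimedean order'' is false as stated: by H\"older an archimedean oag embeds in $\mathbb R$ and so has size at most $2^{\aleph_0}$. This does not damage $(1)\Rightarrow(3)$, since each admissible sequence of invariants is realized by a countable subgroup of $\mathbb R$ of the kind you describe, but the general claim should not be invoked.
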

 \begin{fact}[{\cite{robzak, zakon}}]\label{fact:complregoag}
Every discrete regular $M$ is a model of \emph{Presburger Arithmetic}, i.e.\ $M\equiv \mathbb Z$. If $M, N$ are dense regular, then $M\equiv N$ if and only if, for each $\primo\in \mathbb P$, either $M/\primo M$ and $N/\primo N$ are  both infinite or they have the same finite size.
 \end{fact}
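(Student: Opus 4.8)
Both assertions are classical, due to \cite{robzak,zakon}; granting the characterisations collected in Fact~\ref{fact:regoag}, the plan is as follows. The observation underpinning everything is that, for an oag $M$ and a prime $\primo$, the condition ``$M/\primo M$ has at least $m$ elements'' is expressed by the $L_\mathrm{Pres}$-sentence $\exists x_1\ldots x_m\,\bigwedge_{i<j}\neg(x_i\equiv_\primo x_j)$; hence whether $M/\primo M$ is infinite, and if not its (finite) size, is determined by $\Th(M)$. Since discreteness and density are likewise elementary, this at once gives the ``only if'' direction of the dense case, and shows that the discrete/dense dichotomy is respected by $\equiv$.

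For the discrete case I would check directly that a discrete regular oag $M$ satisfies Presburger's axiom system for $\Th(\mathbb Z)$: the axioms of a discretely ordered abelian group whose least positive element is $1$, the defining axioms $\forall x\,\forall y\,(x\equiv_n y\leftrightarrow\exists z\,(x=y+nz))$, and, for each $n\ge2$, the coset-representative schema $\forall x\,\bigvee_{i=0}^{n-1}x\equiv_n i\cdot 1$. Only the last schema needs an argument: using that $1$ is the least positive element, every bounded interval of ``length'' $(n-1)\cdot 1$ in $M$ coincides with $\{a,\,a+1\cdot 1,\ldots,a+(n-1)\cdot 1\}$ for its left endpoint $a$; so, given $x$, the interval $[x-(n-1)\cdot 1,\,x]$ has exactly $n$ elements, hence by Fact~\ref{fact:regoag}(\ref{point:intdiv}) it contains a multiple $n\cdot d$ of $n$, and writing $x-n\cdot d=i\cdot 1$ with $0\le i<n$ gives $x\equiv_n i\cdot1$. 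Therefore $M\models\Th(\mathbb Z)$.

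For the ``if'' direction of the dense case, suppose $M$ and $N$ are dense regular oags with the same invariants. Let $T$ be the $L_\mathrm{Pres}$-theory axiomatised by the (elementary) axioms of dense regular oags together with, for each prime $\primo$, the sentence fixing $|M/\primo M|$ when it is finite, and the schema forcing $|M/\primo M|\ge m$ for all $m$ when it is infinite, the choice dictated by the common invariants. Then $M,N\models T$, so it suffices to show $T$ is complete. Since $T\vdash 1=0$, every closed $L_\mathrm{Pres}$-term is provably $0$, so any quantifier-free $L_\mathrm{Pres}$-sentence is $T$-equivalent to $\top$ or to $\bot$; hence completeness of $T$ reduces to quantifier elimination for $T$. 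The step I expect to be the main obstacle is exactly this: establishing that $T$ itself---not merely each of its completions, which is all Fact~\ref{fact:regoag} delivers---eliminates quantifiers in $L_\mathrm{Pres}$, with the elimination procedure depending only on the prescribed sizes of the $M/\primo M$.

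That quantifier elimination follows the Presburger template. After normalising the coefficient of $x$, one reduces to eliminating $\exists x$ from a conjunction of order conditions $a\cdot x\lessgtr t_j$ and congruence conditions $a\cdot x\equiv_{n_j}t_j$ with the $t_j$ free of $x$, hence to deciding whether a prescribed interval contains a solution of a system of congruences. A nonempty interval with more than one point is infinite---by density, or trivially if it is unbounded---so Fact~\ref{fact:regoag}(\ref{point:intdiv}) together with the Chinese Remainder Theorem reduces solvability there to pairwise compatibility of the congruences, which is a quantifier-free condition on the $t_j$. When the interval is a single point, or there are no order constraints at all, solvability becomes a statement about the finite groups $M/\primo^{\ell}M$, which for a regular oag are governed by the $M/\primo M$ (they are $(\mathbb Z/\primo^{\ell})^{d}$ when $|M/\primo M|=\primo^{d}$, with the evident variant in the infinite case) and hence decided by $T$. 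Tracking leading coefficients and moduli exactly as over $\mathbb Z$ then produces the required quantifier-free equivalent; since this is classical, I would cite \cite{robzak,zakon} for the details rather than reproduce them.
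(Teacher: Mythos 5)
This Fact is stated in the paper with a citation to \cite{robzak,zakon} and no proof is given, so there is nothing in the paper itself to compare your argument against; your sketch is, in outline, the classical one. The discrete half is correct: in a discrete regular oag with least positive element $1$, the interval $[x-(n-1)\cdot 1,\,x]$ consists exactly of the $n$ points $x-i\cdot 1$ for $0\le i<n$, Fact~\ref{fact:regoag}(\ref{point:intdiv}) produces a multiple of $n$ among them, and the coset-representative schema follows; together with the remaining Presburger axioms this yields $M\equiv\mathbb Z$. For the dense half you correctly isolate the crux, namely that one needs quantifier elimination for the \emph{axiomatised} theory $T$, not merely for each of its completions. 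The one place your sketch slips is in the handling of \emph{negated} congruence atoms $y\not\equiv_m t$: in the dense case these cannot be rewritten as a finite disjunction of positive congruences (there are no distinguished coset representatives once $1=0$), so ``reduces solvability to pairwise compatibility of the congruences'' is not quite right even in the infinite-interval case. There one must decide whether the Boolean combination of cosets cut out by the positive and negated congruence constraints is nonempty, and that is settled by $T$ via the invariants $\abs{M/\primo M}$ and the induced structure of the $M/\primo^\ell M$ --- exactly the dependence you flag, but which you confine to the degenerate cases (singleton interval, or no order constraints), whereas it arises generically. With that accounted for the plan is sound, and the remaining bookkeeping is safely delegated to the cited references.
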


\begin{notation}For the rest of the section we adopt the following (not entirely standard) conventions. Let $M$ be an oag and $A\subseteq M$. 
We denote by $A_{>0}$ the set $\set{a\in A\mid a>0}$, by $\Span A$ the group generated by $A$, and by $\operatorname{div}(M)$  the divisible hull of $M$.
We allow intervals to have endpoints in the divisible hull. In other words, an \emph{interval} in $M$ is a set of the form $\set{x\in M\mid a \sqsubset_0 x \sqsubset_1 b}$, for suitable $a,b\in \div(M)\cup\set{\pm \infty}$ and $\set{\sqsubset_0, \sqsubset_1}\subseteq \set{<, \le}$.

A \emph{cut} $(L,R)$ is given by subsets $L, R\subseteq M$ such that $L\le R$ and $L\cup R=M$. We call such a cut \emph{realised} iff $L\cap R\ne \emptyset$, and \emph{nonrealised} otherwise. The \emph{cut} $(L,R)$ of $p\in S_1(M)$ is given by $L=\set{m\in M\mid p(x)\proves x\ge m}$ and $R=\set{m\in M\mid p(x)\proves x\le m}$. The \emph{cut} of $c\in N\succ M$ in $M$ is the cut of $\tp(c/M)$. We say that $c\in N\succ M$ \emph{fills} a cut $(L,R)$ if the latter equals the cut of $c$. For $a\in M$, we denote by $a^+$ the cut $(L,R)$ with $L=\set{m\in M\mid m\le a}$ and $R=\set{m\in M\mid a<m}$, and similarly for $a^-$. Analogous notions are defined for $a\in \div(M)$. 
\end{notation}

Every interval is definable: e.g., $(a/n, +\infty)$ is defined by $a<n\cdot x$. If $(L,R)$ is a cut then $\abs{L\cap R}\le 1$. A type is realised if and only if its cut is. Let $L_\mathrm{ab}\coloneqq\set{0,+,-}$.

\begin{rem}\label{rem:cutsandtypes} By  regularity, a $1$-type over $M\models T$ is determined by a cut in $M$ and a choice of cosets modulo each $nM$ (if $M/nM$ is infinite a type may say that the coset $x+nM$ is not represented in $M$) consistent with the $L_\mathrm{ab}$-theory of $M$.
\end{rem}
\begin{lemma}\label{lemma:anycosets2}
If $M$ is a dense regular oag then, for every $n>0$, every coset of $nM$ is dense in $M$. In particular, given any nonrealised $p\in S_1(M)$, and any nonrealised $q_0\in S_1(M\restr L_{\mathrm{ab}})$, there is $q\in S_1(M)$ restricting to $q_0$ and in the same cut as $p$.
\end{lemma}
\begin{proof}
 By density and point~\ref{point:intdiv} of Fact~\ref{fact:regoag}, every $nM$ is dense; as translations are homeomorphisms for the order topology, each coset of $nM$ is dense.
\end{proof}

\subsection{Imaginaries in regular ordered abelian groups}
The first step to compute $\invtildeof{\monster^\eq}$ is to take care of the reduct to a certain fully embedded family of imaginary sorts, that suffice for weak elimination of imaginaries by a result of Vicar\'ia~\cite{vicEI}.
Recall that $T$ has \emph{weak elimination of imaginaries} iff for every imaginary $e$ there is a real tuple $a$ such that $e\in \dcl^\eq(a)$ and $a\in \acl^\eq(e)$.
For $\primo\in \mathbb N$  and $n\geq1$, define $T_{\primo^n}$ as the $L_\mathrm{ab}$-theory of  $\bigoplus_{i\in\omega}\mathbb{Z}/\primo^n\mathbb{Z}$. The following is well known.

\begin{fact}\label{fact:ladder}
\begin{enumerate}
\item Let $A$ be an infinite abelian group. Then  $A\models T_{\primo^n}$ if and only if $\primo A=\set{a\in A\mid \primo^{n-1}a=0}$.
\item $T_{\primo^n}$ has quantifier elimination and is totally categorical.
\item If $A\models T_{\primo^n}$, then $\primo A$ is a model of $T_{\primo^{n-1}}$, and the induced structure on $\primo A$ is that of a pure abelian group.
\item $T_{\primo^n}$ has weak elimination of imaginaries.\label{WEI-Tpn}
\end{enumerate}
\end{fact}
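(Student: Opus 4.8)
The plan is to derive all four statements from two ingredients: Pr\"ufer's theorem that every abelian group of bounded exponent is a direct sum of cyclic groups, and the arithmetic identity $\primo^jA=\set{a\in A\mid\primo^{n-j}a=0}$, which I would first establish for the standard model $\bigoplus_{i\in\omega}\mathbb Z/\primo^n\mathbb Z$ by a componentwise computation in $\mathbb Z/\primo^n\mathbb Z$ (there $\primo^j(\mathbb Z/\primo^n\mathbb Z)=\primo^{\min(j,n)}\mathbb Z/\primo^n\mathbb Z$ visibly coincides with $\set{a\mid\primo^{n-j}a=0}$), and which then holds in every model once completeness is known. For (1), the forward direction is the case $j=1$ of this identity, using that ``$\primo A=\set{a\mid\primo^{n-1}a=0}$'' is a first-order property preserved under elementary equivalence. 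For the converse I would use Pr\"ufer's theorem to write an arbitrary model as $\bigoplus_{k\le n}(\mathbb Z/\primo^k\mathbb Z)^{(\lambda_k)}$ and observe, summand by summand, that the equality $\primo A=\set{a\mid\primo^{n-1}a=0}$ fails on any $\mathbb Z/\primo^k\mathbb Z$ with $k<n$; this forces $\lambda_k=0$ for $k<n$, so $A\cong(\mathbb Z/\primo^n\mathbb Z)^{(\lambda_n)}$ with $\lambda_n$ infinite, hence $A\models T_{\primo^n}$.

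Part (2) then follows quickly. Since a model of infinite cardinality $\kappa$ must be $(\mathbb Z/\primo^n\mathbb Z)^{(\kappa)}$, the theory is $\kappa$-categorical for every infinite $\kappa$, i.e.\ totally categorical, and hence complete by the {\L}o\'s--Vaught test (``infinite'' is in the axioms, so there are no finite models). For quantifier elimination, the identity above shows every divisibility condition $\exists y\,(\primo^ky=x)$ is equivalent to an equation $\primo^{n-\min(k,n)}x=0$; eliminating existential quantifiers from a linear system over the local principal ideal ring $\mathbb Z/\primo^n\mathbb Z$ (a Smith-type normal form computation in a free module) therefore lands in the quantifier-free fragment, so every positive primitive formula is quantifier-free, and the Baur--Monk theorem (every formula in a module is a Boolean combination of positive primitive ones) upgrades this to full quantifier elimination.

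For (3), I would first check that $B\coloneqq\primo A$ satisfies the level-$(n-1)$ axiom from (1), namely $\primo B=\primo^2A=\set{b\in B\mid\primo^{n-2}b=0}=B[\primo^{n-2}]$, which is immediate from the identity; so $B\models T_{\primo^{n-1}}$ (the case $n=1$ degenerates to $\primo A=0$). For the induced structure, quantifier elimination gives that every $A$-definable subset of $A^m$ is a Boolean combination of cosets of the $\emptyset$-definable subgroups $\primo^jA=A[\primo^{n-j}]$; for $j\ge1$ these are also $\emptyset$-definable subgroups of $B$ (they equal $\primo^{j-1}B$), and intersecting such a coset with $B^m$ yields either $\emptyset$ or a coset lying inside $B^m$, in both cases a set definable in the pure abelian group $B$.

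Part (4) is where the genuine work lies, and the obstacle I anticipate is that the canonical base of a definable type a priori involves cosets $a+\primo^j\monster$, i.e.\ elements of the imaginary quotient sorts $\monster/\primo^j\monster$, and such a coset is infinite and homogeneous, so no real element of it is algebraic over it. The key move is to use the identity once more: for each $0\le j\le n$ the term $x\mapsto\primo^{n-j}x$ induces a well-defined $\emptyset$-definable \emph{bijection} $\monster/\primo^j\monster\to\primo^{n-j}\monster$, well-definedness and injectivity being exactly the equality $\primo^j\monster=\monster[\primo^{n-j}]$; hence every such coset is after all coded by a real element. Combined with the Boolean-combination description of definable sets from (2), this shows that for any $\monster$-definable $D$ the finite set of cosets entering its canonical decomposition corresponds to a finite set of real elements, each lying in $\acl^\eq(\code D)$ and together determining $\code D$; listing them gives a real tuple $\bar a$ with $\code D\in\dcl^\eq(\bar a)$ and $\bar a\in\acl^\eq(\code D)$, i.e.\ weak (not full, since parallel cosets cannot be canonically ordered) elimination of imaginaries. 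Alternatively I would invoke Fact~\ref{fact:Criterion-WEI}: the unique sort is trivially a home sort, density of definable types follows from Fact~\ref{fact:Def-types-1-var} together with the coset description of definable subsets of $\monster^1$, and every global definable type is specified by a compatible choice of cosets of the $\primo^j\monster$ and hence admits a canonical base in the real sort via the bijections just produced.
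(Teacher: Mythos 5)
The paper proves only part~(4) of this Fact, treating (1)--(3) as well known, so those parts of your sketch have no counterpart in the paper to compare against; they are reasonable, modulo a small circularity between (1) and (2) (your converse of (1) ends ``hence $A\models T_{\primo^n}$'', which already needs completeness, which you derive in (2) by quoting (1); the clean order is to verify the equality in $\bigoplus_\omega\mathbb Z/\primo^n\mathbb Z$, apply Pr\"ufer to get that the class of infinite groups satisfying $\primo A=A[\primo^{n-1}]$ is totally categorical, deduce completeness of its theory by Vaught's test, and only then read off both directions of (1)).

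For part~(4), your second (``Alternatively'') route is the one the paper uses: Fact~\ref{fact:Criterion-WEI}, density of definable types from stability, and the definable bijection $\monster/\primo^j\monster\cong\primo^{n-j}\monster$, which is exactly the paper's identification $A/\primo^m A\cong\primo^{n-m}A$. But there is a gap at the heart of the argument. Fact~\ref{fact:Criterion-WEI} asks for real canonical bases of definable types in \emph{every} cartesian power $A^N$, and your sketch addresses only the one-variable picture, asserting that a definable type ``is specified by a compatible choice of cosets of the $\primo^j\monster$''. For $N>1$ this misses the $\acl^\eq(\emptyset)$-definable pp-subgroups of $A^N$ that are not products of the $\primo^j A$, e.g.\ $\set{(x,y):\primo^j x=\primo^k y}$. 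The paper bridges this in two steps you omit: first, $A$ is an $\omega$-stable one-based group, so every definable type in $A^N$ is the generic of a coset $a+B$ with $B$ a connected $\acl^\eq(\emptyset)$-definable subgroup; second, by QE one writes $B$ as the kernel of an integer matrix $C$, and the Elementary Divisor Theorem reduces $B$ (after a coordinate change) to the product form $\primo^{m_1}A\oplus\cdots\oplus\primo^{m_k}A\oplus(0)^{N-k}$, at which point your bijection finishes. Your first version --- extracting a real tuple from a ``canonical decomposition'' of an arbitrary definable set into pp-cosets --- has the further problem that such a decomposition of a Boolean combination of cosets is not canonical, which is precisely why the argument is routed through definable types and their canonical bases rather than through codes of definable sets directly.
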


\begin{proof}[Proof sketch]
For~\ref{WEI-Tpn}, as $T_{\primo^n}$ is stable, it suffices to show that  canonical bases of types over models are interdefinable with real tuples~\cite[Proposition~3]{EvPiPo90}. This is an application of the Elementary Divisor Theorem, and is left to the reader.
  \end{proof}
Let $T_{\primo^{\infty}}$ be the following multi-sorted theory:   
\begin{itemize}
\item for every $n>0$ there is a sort $\sort_{\primo^{n}}$, endowed with a copy of $L_\mathrm{ab}$;  
\item for every $n>0$ there is a function symbol $\rho_{\primo^{n+1}}\from \sort_{\primo^{n+1}}\rightarrow \sort_{\primo^{n}}$;
\item $M\models T_{\primo^{\infty}}$ if and only if, for all $n>0$,  $\sort_{\primo^n}(M)\models T_{\primo^n}$ and $\rho_{\primo^{n+1}}\from \sort_{\primo^{n+1}}(M)\to \sort_{\primo^{n}}(M)$ is a surjective group 
homomorphism with kernel $\primo^n\sort_{\primo^{n+1}}(M)$.
\end{itemize}

\begin{rem}\label{rem:gehretqe}
In an earlier version of this manuscript, we had claimed that $T_{\primo^{\infty}}$ has quantifier elimination. This does not hold. But one may show that it is enough to add function symbols $\lambda_n\from\sort_{\primo^{n}}\rightarrow\sort_{\primo^{n+1}}$ for all $n$, interpreted as the definable group isomorphism $\sort_{\primo^{n}}(A)\rightarrow \primo\sort_{\primo^{n+1}}(A)$ mapping $a$ to $\primo\tilde{a}$ where $\tilde{a}$ is any element with $\rho_{\primo^{n+1}}(\tilde{a})=a$. We thank Allen Gehret for having pointed this out to us.
\end{rem}
The quantifier elimination result above, which has been mentioned for the sake of completeness, will not be used below. Let  $\hat \kappa$ be the monoid of  cardinals not larger than $\kappa$, with the usual  sum and order.
\begin{co}\label{co:ladder}
  \begin{enumerate}
  \item\label{point:tpinfty1}   The theory $T_{\primo^{\infty}}$ is complete, totally categorical, 1-based, and has weak elimination of imaginaries.
  \item\label{point:tpinfty2}  In $T_{\primo^\infty}$, we have $\invtilde\cong \mathbb N$ and, for each infinite cardinal $\kappa$, the monoid $\invtildestarof\kappa\monster$ is (well-defined and) isomorphic to $\hat \kappa$.
  \item  \label{point:tpinfty3} More precisely, if $\tp(a/\monster)$ is $M$-invariant, then there is a basis  $b\in \dcl(M a)$ of the $\mathbb F_\primo$-vector space $\sort_\primo(\dcl(\monster a))$ over $\monster$, and $\tp(a/\monster)$ is domination-equivalent to $\tp(b/\monster)$, witnessed by $\tp(ab/M)$ in both directions, and  the isomorphism above sends its domination-equivalence class to the cardinality of $b$.  \end{enumerate}
\end{co}

\begin{proof}
\ref{point:tpinfty1} is immediate from  Fact~\ref{fact:ladder} and the fact that abelian groups are 1-based.
As for~\ref{point:tpinfty2}, each of the sorts $\sort_{\primo^n}$ is stable unidimensional, that is, if $p\perp q$ then one of $p,q$ is algebraic, and it follows easily that so is $T_{\primo^\infty}$. The conclusion for finitary types then follows from~\cite[Corollary~5.19]{invbartheory}, and the version for \textasteriskcentered-types is similar.

To prove~\ref{point:tpinfty3}, if $b\in \dcl(Ma)$ is a basis of $\sort_\primo(\dcl(M a))$ over $M$,  by $M$-invariance it is also a basis of $\sort_\primo(\dcl(\monster a))$ over $\monster$. Because in unidimensional theories the domination-equivalence class of a tuple is determined by its weight~\cite[Remark~5.12]{invbartheory}, it suffices to show that the cardinality $\kappa$ of $b$ equals the weight  $w(\tp(a/\monster))$. For $T_{\primo^n}$ this is well known, and as $\sort_{\primo^n}(\monster)$ is a fully embedded model of $T_{\primo^n}$, the result is easily seen to transfer to $T_{\primo^{\infty}}$.
\end{proof}

We now consider a regular oag $M$. Since it is well known that Presburger Arithmetic   eliminates imaginaries (by definable choice), we may assume that $M$ is dense.

We view $M$ as a structure in the language with one sort for the oag itself, endowed with $L_\mathrm{oag}$, one sort $\sort_{\primo^n}$ for each prime $\primo$ and each $n>0$, endowed with $L_\mathrm{ab}$ and interpreted as the group $M/\primo^nM$, functions $\pi_{\primo^n}$ for the quotient map from $M$ to $M/\primo^nM$ and functions $\rho_{\primo^{n+1}}$ for the canonical surjections $M/\primo^{n+1}M\rightarrow M/\primo^nM$. Moreover, for every prime $\primo$ we definably expand the  language on $\left(\sort_{\primo^n}\right)_{n>0}$ so that the multi-sorted structure  $\left(\sort_{\primo^n}(M)\right)_{n>0}$ has quantifier elimination. 

For every $\primo\in \mathbb P$, let $d_\primo\in\mathbb{N}\cup\set{\infty}$ be such that $(M: \primo M)=\primo^{d_\primo}$. Set $T\coloneqq \Th(M)$.  The proof of the following lemma  is straightforward from Lemma~\ref{lemma:anycosets2} and quantifier elimination for the one-sorted theory of $M$ in $L_\mathrm{Pres}$, and we leave it to the reader.

\begin{lemma}\label{lemma:roag-multisorted}
The theory $T$ eliminates quantifiers. For $\monster\models T$, the following holds.
 For every $\primo$ prime and $n>0$, the sort $\sort_{\primo^n}(\monster)$ equipped with the natural $L_\mathrm{ab}$-structure is fully embedded.
 If $d_\primo=\infty$, the structure given by $(\sort_{\primo^n}(\monster))_{n>0}$, together with the maps $\rho_{\primo^{n+1}}$ and the natural $L_\mathrm{ab}$-structure on each sort, is fully embedded and a model of $T_{\primo^{\infty}}$. If $d_\primo$ is finite, every sort $\sort_{\primo^n}(\monster)$ is finite.
 If $\mathfrak p, \mathfrak q$ are distinct primes, then $\sort_{\primo^n}(\monster)_{n>0}$ and $\sort_{\mathfrak q^n}(\monster)_{n>0}$ are orthogonal.
\end{lemma}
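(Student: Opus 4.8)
The plan is to deduce everything from the relative quantifier elimination for regular ordered abelian groups in $L_{\mathrm{Pres}}$ (Fact~\ref{fact:regoag}), bearing in mind that here $M$ is dense, so $1$ is interpreted as $0$ and $L_{\mathrm{Pres}}$ is $L_{\mathrm{oag}}$ together with the congruences $\equiv_n$, each of which, by the Chinese Remainder Theorem, is equivalent to a conjunction of finitely many conditions of the form $\pi_{\primo^k}(\,\cdot\,)=\pi_{\primo^k}(\,\cdot\,)$ and is thus quantifier-free in the expanded language.

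\emph{Quantifier elimination.} I would first make two reductions. Since the maps $\pi_{\primo^k}$ are surjective in $T$, a quantifier over a variable of sort $\sort_{\primo^k}$ can be traded for one over an oag variable (substituting $\pi_{\primo^k}(z)$ for it), so it is enough to eliminate oag quantifiers. Next, commuting the $\rho$'s past $+$ and past the $\pi$'s, every term of sort $\sort_{\primo^k}$ is an $L_\mathrm{ab}$-combination of terms $\pi_{\primo^k}(t)$ with $t$ an oag term and of $\rho$-composites of variables of sorts $\sort_{\primo^j}$, $j\ge k$. Hence in a quantifier-free $\phi(v,\bar x)$ with $v$ an oag variable, the atoms containing $v$ are either ordinary order/equality atoms among oag terms, or atoms of the form $a\,\pi_{\primo^k}(v)=w(\bar x)$ with $a\in\mathbb Z$ and $w(\bar x)$ a $\sort_{\primo^k}$-term over $\bar x$. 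Treating the $w(\bar x)$ as opaque parameters, the latter atoms impose on $v$ a union of cosets modulo a single $nM$, with the relevant cosets picked out by a quantifier-free condition on the $w(\bar x)$; so $\exists v\,\phi$ becomes a Presburger-style elimination over the dense regular oag, run using point~\ref{point:intdiv} of Fact~\ref{fact:regoag} together with the density of cosets from Lemma~\ref{lemma:anycosets2} to decide when an interval meets a coset. The output is quantifier-free in $L$. This is the only step demanding genuine care.

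\emph{Full embeddedness (points 1 and 2).} With QE available, fix a tuple $\bar v$ of variables with sorts among the $\sort_{\primo^m}$, parameters $\bar a$, and a quantifier-free $\phi(\bar v,\bar a)$. Each atom has a sort; an oag-sorted atom cannot involve $\bar v$, as no function maps a $\sort$-sort into the oag, so it is a truth constant. An atom of sort $\sort_{\primo^k}$ in which a variable $v_i$ of sort $\sort_{\primo^{m_i}}$ occurs forces $k\le m_i$, with $v_i$ entering through the unique $\rho$-composite $\sort_{\primo^{m_i}}\to\sort_{\primo^k}$, so the atom becomes an $L_\mathrm{ab}$-linear relation among these images and a constant in $\sort_{\primo^k}(\monster)$ — here one uses that parameters of sort $\sort_{\primo^m}$ with $m<k$ cannot occur in a $\sort_{\primo^k}$-atom and that oag parameters enter only via $\pi_{\primo^k}$. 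Thus $\phi(\bar v,\bar a)$ is equivalent to a quantifier-free formula in the $L_\mathrm{ab}$- and $\rho$-structure on $(\sort_{\primo^m}(\monster))_{m>0}$ with parameters there, giving full embeddedness of this family; point~1 is the special case where all $v_i$ have sort $\sort_{\primo^n}$, where one additionally absorbs the $\rho_{\primo^n}$-images using $\ker(\sort_{\primo^n}\to\sort_{\primo^k})=\primo^k\sort_{\primo^n}(\monster)=\set{x\mid\primo^{n-k}x=0}$, valid because ordered abelian groups are torsion-free.

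\emph{Identifying the structure.} Finally, writing $\monster_0$ for the oag sort of $\monster$, the map $\rho_{\primo^{n+1}}$ is the canonical projection $\monster_0/\primo^{n+1}\monster_0\to\monster_0/\primo^n\monster_0$, hence surjective with kernel $\primo^n\sort_{\primo^{n+1}}(\monster)$; and torsion-freeness gives $\primo(\monster_0/\primo^n\monster_0)=\set{x\mid\primo^{n-1}x\in\primo^n\monster_0}=\set{x\mid\primo^{n-1}x=0}$, so by Fact~\ref{fact:ladder} the group $\sort_{\primo^n}(\monster)$ models $T_{\primo^n}$ as soon as it is infinite. If $d_\primo=\infty$ it surjects onto the infinite group $\monster_0/\primo\monster_0$, hence is infinite, and the family is a model of $T_{\primo^\infty}$. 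If $d_\primo$ is finite, the filtration $\monster_0\supseteq\primo\monster_0\supseteq\cdots$ has successive quotients $\primo^i\monster_0/\primo^{i+1}\monster_0\cong\monster_0/\primo\monster_0$ (multiplication by $\primo^i$ being an isomorphism, again by torsion-freeness), so $(\monster_0:\primo^n\monster_0)=\primo^{n d_\primo}$ and each $\sort_{\primo^n}(\monster)$ is finite. The main obstacle is the quantifier-elimination step; the rest is bookkeeping with sorts, surjectivity of the $\pi$'s and $\rho$'s, and torsion-freeness.
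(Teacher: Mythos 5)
Your proof is correct. The paper actually does not supply a proof of this lemma (it says "We leave the easy proof of the following lemma to the reader"), so there is nothing to compare against; you have filled in the gap along the expected lines. The key points all check out: trading $\sort_{\primo^k}$-quantifiers for oag quantifiers via surjectivity of $\pi_{\primo^k}$; the term normal form using $\rho_{\primo^{k+1}}\circ\cdots\circ\pi_{\primo^j}=\pi_{\primo^k}$; the Presburger-style elimination, for which density of cosets (Lemma~\ref{lemma:anycosets2}) and the quantifier-free definability of $\primo^j\sort_{\primo^k}$ by the torsion condition $\primo^{k-j}x=0$ are exactly what make it go through without hidden quantifiers; the sort-tracking argument for full embeddedness; and the identification of the structure on $(\sort_{\primo^n})_n$ via torsion-freeness of $\monster_0$. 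One minor imprecision, with no effect on the argument: an atom $a\pi_{\primo^k}(v)=w$ constrains $v+\primo^k M$ to lie in a coset of the subgroup $\ker(a\cdot)\le\sort_{\primo^k}$ (when nonempty), not merely a "union of cosets," and whether this is nonempty is a quantifier-free condition on $w$ by the torsion description of $a\sort_{\primo^k}$ that you correctly invoke.
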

\begin{defin}
 Denote by  $\mathcal Q$ the family of sorts $\set{\sort_{\primo^n}\mid \primo\in \mathbb P, n>0}$. If $q=\tp(c/\monster)$ is a \textasteriskcentered-type, possibly with coordinates in the sorts in $\mathcal Q$, for each  $\primo\in \mathbb P$,  let $\kappa_\primo(q)$ be the dimension of the $\mathbb F_\primo$-vector space $\dcl(\monster c)/\primo(\dcl(\monster c))$ over $\monster/\primo \monster$.  Let $\mathbb P_T$ be the set of primes $\primo$ such that if $M\models T$ then $\primo M$ has infinite index, and denote by  $\prod_{\mathbb P_T}\hat \kappa$ the monoid of $\mathbb P_T$-indexed sequences of cardinals smaller or equal than $\kappa$ with pointwise cardinal sum, equipped with the  product (partial) order.
\end{defin}

\begin{co}\label{co:invQ}
  The family of sorts $\mathcal Q$, equipped with the $L_\mathrm{ab}$-structure on each sort and the maps $\rho_{\primo^{n+1}}$, is fully embedded. When viewed as a standalone structure, $\otimes$ respects $\doms$ and $\invtildestarof\kappa{\mathcal Q(\monster)}\cong \prod_{\mathbb P_T}\hat \kappa$.
\end{co}
\begin{proof}
This follows from Lemma~\ref{lemma:roag-multisorted}, Corollary~\ref{co:ladder}, and Fact~\ref{fact:fullembemb}. Compatibility of $\otimes$ with $\doms$ is a consequence of stability, see~\cite[Propositions~1.21 and 1.25]{invbartheory}.
\end{proof}
\begin{fact}[{\cite[Theorem~5.1]{vicEI}}]\label{thm:oagswei}
The theory $T$ has weak elimination of imaginaries.
\end{fact}

\begin{rem}\label{rem:ppenough}
In \cite{vicEI}, Vicar\'ia proves a more general result, of which Fact~\ref{thm:oagswei} is a special case. Note that she adds sorts for quotients of the form $M/nM$ for all $n>0$. As $M/nM$ is definably isomorphic to $\prod_{i=1}^mM/\primo_i^{n_i}M$, where $n=\prod_{i=1}^m\primo_i^{n_i}$ is the decomposition of $n$ into prime powers, it suffices to add the sorts $\sort_{\primo^n}$.

 Observe that, for the above to go through, we need to have in our language the sorts $\sort_{\primo^n}$ even when they are finite.  Alternatively, one may dispense with the finite $\sort_{\primo^n}$ by naming enough constants, e.g.~by naming a model.
\end{rem}
\subsection{Moving to the right of a convex subgroup}
 \begin{ass}
     Until the end of the section, $T$ is the complete $L_\mathrm{Pres}$-theory of a regular oag. Imaginary sorts are not in our language until further notice.
 \end{ass}

\begin{defin}\label{defin:invconvsbgrs}Let  $B\subseteq M$.
   A type $q(x)\in S_1(M)$ is \emph{right of $B$}  iff $q(x)\proves\set{x>d\mid d\in B}\cup \set{x<d\mid d\in M, d>B}$.
 An element of an elementary extension of $M$ is \emph{right of $B$} if its type over $M$ is.
 A convex subgroup $H$ of $\monster$ is called \emph{[$A$-]invariant} iff there is an [$A$-]invariant type to its right.
\end{defin}
\begin{rem}\label{rem:smallcof} Let $p\in S_1(\monster)$ be an $M$-invariant type. If its cut $(L,R)$ is definable, then it is $M$-definable. If not, then exactly one between the cofinality of $L$ and the coinitiality of $R$ is small, and $M$ contains a set cofinal in $L$ or coinitial in $R$.
\end{rem}

\begin{proof}
The case of a definable cut is clear, so let us assume $(L,R)$ is a non-definable cut of $\monster$.  In particular, $L\neq\emptyset\neq R$. If $L\cap M$ is not cofinal in $L$, there is $\ell\in L$ with $L\cap M<\ell$, so by regularity of $\monster$ and saturation there is $\ell_0\in L$ divisible by all $n\geq1$ such that $L\cap M<\ell_0<\ell$. Similarly, if $R\cap M$ is not coinitial in $M$ there is $r_0\in R$, which is divisible by all $n\geq1$, such that $r_0<R\cap M$. By Remark~\ref{rem:cutsandtypes} it follows that $\tp(\ell_0/M)=\tp(r_0/M)$, showing that $(L,R)$ is not $M$-invariant.
\end{proof}

 In particular, in a regular oag a nontrivial convex subgroup $H$ of $\monster$ is invariant if and only if the cofinality of $H$ or the coinitiality of $(\monster\setminus H)_{>0}$ is small, while the trivial subgroup $\set{0}$ is invariant if and only if $\monster$ is dense.
\begin{lemma}\label{lemma:movetocsg}
  In the theory of a regular oag, suppose that $p\in \invtypes_1(\monster)$ and $f$ is a definable function such that $f_*p$ is not realised. Then $p\domeq f_*p$.
\end{lemma}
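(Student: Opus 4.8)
The plan is to establish the two inequalities $p\doms f_*p$ and $f_*p\doms p$ separately; the first holds in full generality. Fix a small $A$ over which $f$ is definable and $p$ is invariant, and pick $b\models p$. Then $\tp(b,f(b)/A)\in S_{p,f_*p}(A)$, it contains the formula $y=f(x)$, and since $p(x)\proves\phi(f(x))$ for every $\phi(y)\in f_*p$ by definition of the pushforward, we obtain $p(x)\cup\tp(b,f(b)/A)\proves f_*p(y)$, that is, $p\doms f_*p$. The very same computation shows more generally that $q\doms h_*q$ for \emph{every} $q\in\invtypes(\monster)$ and every definable $h$, and this will be the only ingredient needed for the reverse inequality as well.

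For $f_*p\doms p$ we use that definable functions in oags are piecewise affine (Fact~\ref{fact:pwaff}). Enlarging $A$ to also contain the finitely many parameters appearing in such a decomposition of $f$, together with the constant term of the relevant affine branch, and using that the complete type $p$ concentrates on a single piece, we find $r,s\in\mathbb Z$ with $s>0$ and $a\in\dcl(A)$ such that $p(x)\proves sf(x)=a+rx$. If $r=0$, then $f$ takes on $p$ the constant value $a/s$, which lies in $\dcl(A)$, so $f_*p$ is realised, against the hypothesis; hence $r\ne 0$. Since oags are torsion-free, the prescription ``$z\mapsto$ the unique $w$ with $rw=sz-a$, and $z\mapsto 0$ if no such $w$ exists'' defines an $A$-definable function $g$, and the displayed identity gives $p(x)\proves g(f(x))=x$. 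Being complete, $p$ is then forced to equal $g_*(f_*p)$: for $\psi(x)\in p$ we have $p(x)\proves\psi(g(f(x)))$, so, writing $\theta(z)$ for the $L(\monster)$-formula $\psi(g(z))$, we get $\theta\in f_*p$, whence $f_*p(z)\proves\psi(g(z))$ and $\psi\in g_*(f_*p)$; as both are complete types, $p=g_*(f_*p)$. Applying $q\doms h_*q$ with $q=f_*p$ and $h=g$ yields $f_*p\doms g_*(f_*p)=p$, which finishes the proof.

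There is no genuine obstacle; the content is the reduction to the affine case, and the only points needing attention are bookkeeping: absorbing into the small base the parameters of the piecewise-affine decomposition and the constant term, and checking that an affine map of nonzero slope is invertible in this setting, which works because $f(x)$ is an honest group element, so that $sf(x)-a$ is genuinely divisible by $r$. The hypothesis that $f_*p$ is not realised enters exactly once, to discard the slope-zero case; without it the conclusion can fail, for instance when $f$ is constant and $p$ is a non-realised invariant type admitting no small base (such as a type concentrating immediately to the right of a small model).
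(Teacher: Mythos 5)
Your proof is correct and follows the same route as the paper's: $p\doms f_*p$ by pushing along $y=f(x)$, then Fact~\ref{fact:pwaff} to reduce to an affine branch, rule out slope zero via non-realisation of $f_*p$, and invert the nonzero-slope affine map to get $f_*p\doms p$. The paper states this in one line ("$f$ is invertible at $p$"); you simply spell out the inverse explicitly, which is harmless — the only caveat is that the final parenthetical is a red herring, since $f$ constant already makes $f_*p$ realised and hence $\ndoms p$ for \emph{any} non-realised invariant $p$, regardless of whether $p$ is finitely satisfiable or has a "small base".
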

\begin{proof}  
   Clearly $p\doms f_*p$. By~\cite[Cor~1.10]{choagqe}, $f$ is piecewise affine. As $f_*p$ is not realised, $f$ cannot be constant at $p$, so it is invertible at $p$ and $f_*p\doms f_*\inverse(f_*p)=p$.
\end{proof}

\begin{pr}
  In Presburger Arithmetic, every invariant $1$-type is domination-equivalent to a type right of an invariant convex subgroup.
\end{pr}
\begin{proof}
By Lemma~\ref{lemma:movetocsg} it suffices to show that, for every nonrealised  $p\in \invtypes_1(\monster)$ there is a definable  $f$ such that $f_*p$ is right of an invariant convex subgroup. By Fact~\ref{fact:regoag},  $\monster/\mathbb Z$ is divisible, and it is easy to see that $\monster/\mathbb Z$ inherits saturation and strong homogeneity from $\monster$. The conclusion follows by lifting the analogous result~\cite[Corollary~13.11]{hhm} (see also~\cite[Proposition~4.8]{dominomin}) from $\monster/\mathbb Z$.  
\end{proof}
In the rest of the subsection we generalise the above to the regular case.
\begin{ass}
   Until the end of the subsection, $M$  denotes  a dense regular oag, and $\monster$ a monster model of $T\coloneqq \Th(M)$. 
\end{ass}

\begin{pr}\label{pr:coinelext}
  Let $b\in \monster \setminus M$ be divisible by every $n>1$ and let $B\coloneqq \Span{M b}= M+ \mathbb Q b$. If $M_{>0}$ is coinitial in $B_{>0}$, then $M\prec B\prec \monster$.
\end{pr}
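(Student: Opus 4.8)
The statement asks to show that, under the stated hypotheses, $B=M+\mathbb Q b$ is an elementary extension of $M$ and an elementary substructure of $\monster$. Since $T$ is the theory of a dense regular oag and hence eliminates quantifiers in $L_{\mathrm{Pres}}$ (where, $M$ being dense, the constant $1$ is interpreted as $0$), it suffices to verify that $B$, as an $L_{\mathrm{Pres}}$-substructure of $\monster$ sitting over $M$, is again a model of $T$: this will simultaneously give $M\prec B$ and $B\prec\monster$ by the Tarski–Vaught test applied via quantifier elimination. The plan is therefore to check that $B$ is a dense regular oag with the same Presburger invariants as $M$, using Fact~\ref{fact:complregoag}.

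First I would observe that $B$ is a subgroup of $\monster$ containing $M$, hence an ordered abelian group, and it is dense since $M$ is; nontriviality is clear. For regularity, by Fact~\ref{fact:regoag} it is enough to show that $B$ has no nontrivial proper definable (equivalently, first-order-definable-in-$L_{\mathrm{oag}}$) convex subgroup, or, more conveniently, to exhibit an archimedean model in $\Th(B)$, or to show directly that every quotient of $B$ by a nontrivial convex subgroup is divisible. Here the coinitiality hypothesis is what does the work: since $M_{>0}$ is coinitial in $B_{>0}$, the group $B$ is archimedean-equivalent to $M$ over small convex subgroups, so any convex subgroup of $B$ meets $M$ in a cofinal-coinitial way; combined with regularity of $M$ one concludes $B$ is regular. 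Then, to pin down the completion, I would compute $B/\primo B$ for each prime $\primo$: since $b$ is divisible by every $n>1$ in $\monster$, the element $b$ (and all of $\mathbb Q b$) lies in $\primo\monster\cap B$, and one checks $\primo B = \primo M + \mathbb Q b$, so that the inclusion $M\hookrightarrow B$ induces an isomorphism $M/(\primo M)\xrightarrow{\ \sim\ } B/(\primo B)$; hence $B$ and $M$ have the same Presburger invariants and $B\equiv M$ by Fact~\ref{fact:complregoag}. This yields $B\models T$.

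Finally, since $M\subseteq B\subseteq\monster$ are all models of the model-complete-modulo-$L_{\mathrm{Pres}}$-QE theory $T$ and the inclusions are $L_{\mathrm{Pres}}$-embeddings, quantifier elimination upgrades them to elementary embeddings, giving $M\prec B\prec\monster$ as required.

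I expect the main obstacle to be the density/coinitiality bookkeeping in the regularity step: one must argue carefully that, although $B$ may introduce new elements below every positive element of $M$ only in the presence of a realised infimum, the coinitiality assumption $M_{>0}$ coinitial in $B_{>0}$ rules out $B$ acquiring a nontrivial convex subgroup disjoint from $M_{>0}$, and hence—using that $M/H$ is divisible for every nontrivial convex $H\subseteq M$—that every nontrivial convex subgroup of $B$ still has divisible quotient. The computation $\primo B=\primo M+\mathbb Q b$ and the induced isomorphism on $\primo$-quotients should be routine once one notes $\mathbb Q b\subseteq\bigcap_{n>1}n\monster$, so that $\mathbb Q b\subseteq\primo B$ and a general element $m+qb$ of $B$ is in $\primo B$ iff $m\in\primo\monster\cap M=\primo M$ (using that $M$ is pure in $\monster$, itself a consequence of $M\prec\monster$).
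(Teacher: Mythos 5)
Your proposal is essentially correct and follows the same overall strategy as the paper's proof: show that $B$ is a dense regular oag, verify that the inclusion $M\hookrightarrow B$ is pure and induces isomorphisms $M/nM\cong B/nB$ (so $B\equiv M$ by Fact~\ref{fact:complregoag}), and then conclude $M\prec B\prec\monster$ by quantifier elimination in $L_\mathrm{Pres}$.

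The one genuine divergence is the regularity step. The paper verifies item~\ref{point:intdiv} of Fact~\ref{fact:regoag}: coinitiality of $M_{>0}$ in $B_{>0}$ means any nonempty $(c,d)\subseteq B$ contains an interval $I$ of $M$, which by Lemma~\ref{lemma:anycosets2} hits every coset of $nM=nB\cap M$, so one finds $e\in I$ with $c+e\in nB$. You instead aim at criterion (5): every quotient of $B$ by a nontrivial convex subgroup is divisible. Spelled out, this works cleanly: if $H\le B$ is nontrivial and convex, coinitiality forces $H\cap M$ to be nontrivial (pick $h\in H_{>0}$ and $m\in M_{>0}$ with $m\le h$; then $m\in H$), so $M/(H\cap M)$ is divisible by regularity of $M$; given $m+qb\in B$ and $n>0$, take $m'\in M$ with $m-nm'\in H\cap M$ and $q'=q/n$, so $n(m'+q'b)-(m+qb)\in H$. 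Be aware, though, that your intermediate assertion ``any convex subgroup of $B$ meets $M$ in a cofinal-coinitial way'' is false as stated: taking $H=B$, it would imply $M$ is cofinal in $B$, which need not hold (e.g.\ if $b>M$, a configuration the coinitiality hypothesis does permit). The statement you actually need and use is only the weaker one that every \emph{nontrivial} convex subgroup of $B$ meets $M_{>0}$. Your purity computation (using $\mathbb Q b\subseteq\bigcap_n n\monster$ together with $n\monster\cap M=nM$ from $M\prec\monster$) is a neat shortcut that avoids a separate check that $M\cap\mathbb Q b=\set 0$.
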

\begin{proof}
    The inclusion $M\subseteq B$ is \emph{pure}, i.e.~for every $n>1$ we have $nB\cap M=nM$.
Moreover, if $c=a+\gamma b$, with $a\in M$ and $\gamma\in \mathbb Q$, then for every $n$ we clearly have $c-a\in nB$, hence $B/nB$ may be naturally identified with $M/nM$.

Because $M$ is dense and $M_{>0}$ is coinitial in $B_{>0}$, it follows that $B$ is as well dense. Let $c<d\in B$ and $n>1$. By assumption, $(0,d-c)$ intersects $M$, hence contains an interval $I$ of $M$, hence represents all elements of $M/nM$ by Lemma~\ref{lemma:anycosets2}. These can be identified with the elements of $B/nB$, as observed above, so there is $e\in I$ such that $c+e\in nB$. Clearly, $c+e \in (c,d)$, hence $B$ is regular by Fact~\ref{fact:regoag}.

 By Fact~\ref{fact:complregoag} and the identification of $M/nM$ with $B/nB$, we obtain  $B\equiv M$.  Since  $M$  is pure in $B$, it is an $L_\mathrm{Pres}$-substructure of $B$, and the conclusion follows by quantifier elimination in $L_\mathrm{Pres}$.
\end{proof}
Recall that an extension $A<B$ of oags is an \emph{i-extension} iff there is no $b\in B_{>0}$ such that the set $\set{a\in A\mid a<b}$ is closed under sum. \begin{lemma}\label{lemma:clearden}
Let $H<M<N$, with $M$ dense regular and $H$ convex.  The set of elements of $N$  right of $H$ is closed under sum. In particular, $N$ is an i-extension if and only if $H\mapsto H\cap M$ is a bijection between the convex subgroups of $N$ and $M$.
\end{lemma}
\begin{proof}
  If $H=M$, the statement is trivial. If $H=\set{0}$,   let $0<c,d<M_{>0}$ and pick $a\in M_{>0}$. By density, there is $b\in M$ with $0<b<a$, and since $b$ and $a-b$ are both in $M_{>0}$ we conclude $c+d<b+a-b=a$. If $H$ is proper nontrivial, by Fact~\ref{fact:regoag} the quotient $M/H$ is divisible, and the conclusion follows from the previous case applied to $M/H$ as a subgroup of the quotient of  $N$ by the convex hull of $H$.
\end{proof}

\begin{pr}
Every $M\models T$ has a maximal elementary i-extension.
\end{pr}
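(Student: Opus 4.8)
The plan is to build the maximal elementary i-extension by a standard chain argument: show that i-extensions can be amalgamated / union'd without destroying the i-property, and then invoke a cardinality bound to stop the construction. Concretely, starting from $M\models T$, I would attempt to produce an increasing elementary chain $M = N_0 \prec N_1 \prec \cdots \prec N_\alpha \prec \cdots$ of i-extensions of $M$ such that at each stage, if $N_\alpha$ is not yet a maximal i-extension, one properly enlarges it to $N_{\alpha+1}$, still an elementary i-extension of $M$; take unions at limits; and argue the process must terminate.

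The two facts that make this work are the following. First, \emph{unions of chains of elementary i-extensions are elementary i-extensions}: if $(N_\alpha)_{\alpha<\lambda}$ is an elementary chain with each $M \prec N_\alpha$ an i-extension, then $N \coloneqq \bigcup_\alpha N_\alpha$ is an elementary extension of $M$, and if some $b \in N_{>0}$ had $\set{m\in M\mid m<b}$ closed under sum, then $b$ already lies in some $N_\alpha$, contradicting that $M \prec N_\alpha$ is an i-extension. Second, \emph{``$M \prec N$ is an i-extension'' is preserved under taking a further elementary i-extension of $N$ over $M$}: this is where the previous corollary is useful, since being an i-extension over $M$ (with $M$ dense regular) is equivalent to the restriction map on convex subgroups $H \mapsto H\cap M$ being a bijection onto the convex subgroups of $M$; if $M \prec N \prec N'$ and both $N\to M$ and $N'\to N$ induce bijections on convex subgroups, then so does the composite $N' \to M$. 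So one really only needs to know that whenever $N$ is an elementary i-extension of $M$ which is \emph{not} maximal as such, there is a proper elementary i-extension $N \prec N'$ which is still an i-extension over $M$ — but ``not maximal'' means exactly that such an $N'$ exists.

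What remains is the \emph{cardinality bound} forcing termination: I would show there is a cardinal $\mu$ (depending only on $\abs M$, e.g.\ $\mu = \beth_2(\abs{M} + \abs{L})$ or something of that flavour) such that any elementary i-extension $N \succ M$ has $\abs N \le \mu$. The point is that in an i-extension the convex subgroups of $N$ biject with those of $M$, so the ``number of cuts realised over $M$'' is controlled: each element $b \in N$ determines its convex subgroup $H_b$, which restricts to a fixed convex subgroup of $M$, and by regularity/quantifier elimination in $L_\mathrm{Pres}$ (together with the earlier analysis of $1$-types over $M$ via cuts and cosets mod $nM$) the type of $b$ over $M$ — hence $b$ itself, up to the saturation we are working in — is pinned down by boundedly much data. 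Bounding $\abs N$ this way, the chain $(N_\alpha)$ must stabilise before stage $\mu^+$, and the union at that point is the desired maximal elementary i-extension.

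The main obstacle I expect is precisely this last step: making the cardinality bound on i-extensions clean. One has to be careful that an i-extension can still realise many $1$-types over $M$ (those right of a fixed invariant convex subgroup, with various coset data mod the $nM$), so the bound is not ``$\abs M$'' but a genuinely larger cardinal; the work is in checking, via the cut-plus-cosets description of $S_1(M)$ and Fact~\ref{fact:pwaff} for the definable functions, that finitely many such $1$-types generate (in the sense of definable closure inside $N$) only boundedly much, so that $\abs N$ cannot exceed a fixed function of $\abs M$. Everything else — closure under unions, transitivity of the i-property over $M$ via the convex-subgroup bijection — is routine given the corollary just proved.
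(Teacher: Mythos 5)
Your proposal is correct and follows essentially the same route as the paper: closure under unions of chains, transitivity of i-extensions (which you establish via the convex-subgroup bijection corollary), a cardinality bound on i-extensions, and then a Zorn's-Lemma/transfinite-chain argument. The paper outsources the cardinality bound to~\cite[proof of Lemma~13.9]{hhm} and the transitivity/union-closure to~\cite[Lemma~4.2.16]{mennuni_thesis}, which are precisely the two places you flag as needing work.
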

\begin{proof}
This is easy, see e.g.~\cite[Proposition~4.2.17]{mennuni_thesis}.
\end{proof}
\begin{pr}\label{pr:csgbij2}
  Suppose $M\models T$ has no proper elementary i-extension and let $p \in S_1(M)$ be nonrealised. Then there are  $a\in M$ and $\beta \in \mathbb Z\setminus\set 0$ such that, if $f(t)=a+\beta t$, then the pushforward $f_*p$ is right of a convex subgroup.
\end{pr}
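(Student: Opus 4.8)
The plan is to reduce to the case of a type right of a convex subgroup by an affine change of variable, mimicking the divisible case. First I would observe that, since $p\in S_1(M)$ is nonrealised, its cut $(L,R)$ in $M$ is nonrealised, and by regularity $p$ is determined by this cut together with a consistent choice of cosets modulo each $nM$ (Remark~\ref{rem:cutsandtypes}). The key geometric input is that $M$ has no proper elementary i-extension: by the preceding corollary, this means the map $H\mapsto H\cap M$ is a bijection between the convex subgroups of any elementary extension $N\succ M$ and those of $M$, equivalently, for every $b\in N_{>0}$ realising a nonrealised cut the set $\set{m\in M\mid m<b}$ is \emph{not} closed under sum.

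Next I would pick a realisation $c\models p$ in some $N\succ M$ and analyse the convex subgroup $H_c$ of $N$ generated by $|c|$. If $c$ is already right of $H_c\cap M$ modulo possibly subtracting an element of $M$, we are essentially done after clearing denominators; the work is to arrange this. The strategy is: because $M < Mc$ fails to be an i-extension (as $c$ fills a nonrealised cut, so $\set{m\in M\mid m<|c|}$ cannot be closed under sum — this is where maximality of $M$ among elementary i-extensions is used, via the corollary), there is some $d\in (Mc)_{>0}$ such that $\set{m\in M\mid m<d}$ is closed under sum; write $d = a + \beta c$ with $a\in M$, $\beta\in\mathbb Q\setminus\set0$, and after scaling we may take $\beta\in\mathbb Z\setminus\set0$ and $a\in M$. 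Then $H\coloneqq\set{m\in M\mid m<d}$ is a convex subgroup of $M$ (convexity is clear; closure under sum and under negation gives the subgroup property, using regularity/density to handle the sign), and I claim $f_*p$, for $f(t)=a+\beta t$, is right of $H$: indeed $f(c)=d$ is by construction above every element of $H$, and it is below every element of $M$ that exceeds $H$ because $H=\set{m\in M\mid m<d}$ says exactly that. One still has to check $f_*p$ is nonrealised — this is immediate since $f$ is affine and invertible on the $1$-type $p$, as in Lemma~\ref{lemma:movetocsg} — and that no cut condition is violated, which follows because an affine map is an order-isomorphism (or anti-isomorphism) between $M$ and its image.

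The main obstacle I anticipate is the passage from "$\set{m\in M\mid m<d}$ is closed under sum" to "it is a convex subgroup right of which $f_*p$ sits", handled cleanly only once one knows such a $d$ exists, which is precisely the content of $M$ having no proper elementary i-extension combined with the corollary characterising i-extensions via convex subgroups: one must be careful that the $d$ extracted from the failure of the i-extension property lies in $Mc = M + \mathbb Q c$ and not merely in a larger extension, so the argument should be run with $N = Mc$ (or rather with the elementary extension it generates, controlled via Proposition~\ref{pr:coinelext} when $M_{>0}$ is coinitial, and via Lemma~\ref{lemma:clearden} otherwise to see that "right of $H$" is automatically closed under sum so no obstruction arises there). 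Cosets modulo $nM$ cause no trouble: $f$ being affine with integer coefficients pushes the coset data forward coherently, and one does not need to track it beyond noting $f_*p$ remains a well-defined complete type by Lemma~\ref{lemma:anycosets2}. I would then note the Presburger case is the earlier corollary and this proposition is its dense-regular analogue, so the eventual corollary generalising "every invariant $1$-type is domination-equivalent to one right of an invariant convex subgroup" follows by combining this with Lemma~\ref{lemma:movetocsg}.
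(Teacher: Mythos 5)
Your overall strategy matches the paper's: realise $p$ by some $c$, pass to $B = M + \mathbb Q c$, use that $M$ has no proper elementary i-extension to find $d\in B_{>0}$ with $\set{m\in M\mid m<d}$ closed under sum, write $d$ affinely in $c$, and clear denominators via Lemma~\ref{lemma:clearden}. However, as written the argument has a genuine gap in its main step, plus some confused reasoning along the way.

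The gap is the missing reduction to the divisible case. To get from "$M$ has no proper elementary i-extension'' to "there is $d\in B_{>0}$ with $\set{m\in M\mid m<d}$ closed under sum'', you need $B$ to \emph{be} a proper elementary extension of $M$. Proposition~\ref{pr:coinelext}, which delivers $M\prec B$ in the coinitial case, requires $c$ to be divisible by every $n>1$; without that, $B/nB$ need not identify with $M/nM$ and purity fails, so $B$ is typically not elementary (it isn't even an $L_\mathrm{Pres}$-substructure). You never arrange divisibility. The paper handles this by first proving the statement for $b$ divisible by all $n$, then invoking Lemma~\ref{lemma:anycosets2} to replace an arbitrary realisation by a divisible one in the \emph{same cut}, and finally transferring the conclusion back because an affine map sends cuts to cuts, so the cut of $f(b)$ equals that of $f(c)$. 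Your mention of Lemma~\ref{lemma:anycosets2} at the end ("$f_*p$ remains a well-defined complete type'') is a different, vacuous use of the lemma and does not repair the gap.

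A second issue: your parenthetical attempting to justify that $M<Mc$ fails to be an i-extension — "$c$ fills a nonrealised cut, so $\set{m\in M\mid m<|c|}$ cannot be closed under sum'' — is both false (if $c$ is right of a convex subgroup, that set \emph{is} closed under sum) and, even if it were true, would point in the wrong direction (non-closure under sum for the single element $c$ is \emph{consistent} with $Mc$ being an i-extension; the definition quantifies over all $b\in N_{>0}$). The correct route, as in the paper, is: once $c$ is divisible by all $n$, split on whether $M_{>0}$ is coinitial in $B_{>0}$. If not, some positive element of $B$ is smaller than $M_{>0}$ and you conclude directly with $H=\set 0$ (no i-extension argument needed). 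If so, $B\succ M$ properly, so by maximality $M<B$ is not an i-extension, and the corollary on convex subgroups produces the desired $d$. Your appeal to Lemma~\ref{lemma:clearden} "otherwise'' is not the right tool for the non-coinitial case; it is used only to clear the denominator of $\beta$. Finally, a small slip: $\set{m\in M\mid m<d}$ is not a subgroup (it contains all negatives); you mean $\set{m\in M\mid |m|<d}$.
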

\begin{proof}
  Let $b\models p$, and suppose first that $b$ is divisible by every $n$. Consider $B\coloneqq \Span{Mb}=M+ \mathbb Q b$. If  there are $a'\in M$ and $\beta'\in \mathbb Q$ such that $0<a'+ \beta' b< M_{>0}$, by Lemma~\ref{lemma:clearden}  multiplying by the denominator of $\beta'$ yields a positive element smaller than $M_{>0}$,  so we obtain the conclusion with the convex subgroup $\set 0$.  If instead there is no such $a'+\beta' b$, then $M_{>0}$ is coinitial in $B_{>0}$, and by Proposition~\ref{pr:coinelext} $B\succ M$. By maximality of $M$, there must be convex subgroups $H_0\subsetneq H_1$ of $B$ such that $H_0\cap M=H_1\cap M$. Hence any positive $a+\beta b\in H_1\setminus H_0$ is right of $H_0\cap M$. We conclude again by clearing the denominator of $\beta$ and using Lemma~\ref{lemma:clearden}.

This shows the conclusion when $b$ is divisible by all $n$.  In the general case, by Lemma~\ref{lemma:anycosets2}, there is $c\in \monster$ with the same cut in $M$ as $b$ which is divisible by every $n$. As we just proved, there is $f(t)\coloneqq a+\beta t$, with  $\beta\in \mathbb Z$ and $a\in M$, such that the cut of  $f(c)$ in $M$ is that of a convex subgroup. Because $f(t)$ sends intervals to intervals, it sends cuts to cuts, hence the cut of $f(b)$ equals that of $f(c)$.
\end{proof}

\begin{co}\label{co:1tpdeqsbgrp}
For  every nonrealised $p(x) \in \invtypes_1(\monster)$ there is a  definable function $f$ such that $(f_*p)(y)$ is right of an invariant convex subgroup, and domination-equivalent to $p$, witnessed by any small type containing $y=f(x)$.
\end{co}
\begin{proof}
  If $p$ is $M$-invariant, up to enlarging $M$ we may assume that it has no proper elementary i-extension. Let $f(t)$ be an $M$-definable function given by Proposition~\ref{pr:csgbij2} applied to $p\restr M$. Then $f_*p$ is $M$-invariant, and its cut is either the one to the left of $(f_*p\restr M)(\monster)$ or the one to its right, which are both cuts right of convex subgroups of $\monster$ by Lemma~\ref{lemma:clearden}. Now apply Lemma~\ref{lemma:movetocsg}.
\end{proof}

\subsection{Computing the domination monoid}
By Fact~\ref{thm:oagswei}, regular oags weakly eliminate imaginaries after adding the sorts $\sort_{\primo^n}$. 
As already  remarked, this implies that passing to $T^\eq$ does not affect the poset $\invtilde$, nor its well-definedness as a monoid. Hence, we will conflate the two settings, and refer to our theory in this language as $T^\eq$,  reserving $T$ for the 1-sorted $L_\mathrm{Pres}$-theory of a regular oag.
\begin{ass}
 Until the end of the section, we work in $T^\eq$.
\end{ass}
\begin{lemma}\label{lemma:wortcsg}
  Let $H_0\subsetneq H_1$ be convex subgroups of $M\models T$ and, for $i<2$, let $q_i(x^i)\in S_1(M)$ be right of $H_i$. Suppose that there is no prime $\primo\in \mathbb P$ such that both $q_i(x^i)$ prove that $x^i$ is in a new coset modulo some $\primo^{\ell_i}$ Then $q_0\wort q_1$.
\end{lemma}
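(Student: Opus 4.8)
The plan is to prove directly that $q_0(x^0)\cup q_1(x^1)$ axiomatises a complete type over $M$. Since $T$ is regular it eliminates quantifiers in $L_\mathrm{Pres}$, so a complete $x^0x^1$-type over $M$ is determined by deciding, for every $(k_0,k_1)\in\mathbb Z^2$, the cut of $k_0x^0+k_1x^1$ in $M$ together with, for every $n>0$, the coset of $k_0x^0+k_1x^1$ modulo $nM$; by the Chinese Remainder Theorem (Remark~\ref{rem:cutsandtypes}) the latter may be treated one prime at a time. I fix realisations $a_i\models q_i$ in a monster model $\monster\succeq M$, write $t\coloneqq k_0a_0+k_1a_1$, and show that $\tp(a_0a_1/M)$ is forced.

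\emph{The cut of $t$.} If $k_0=0$ or $k_1=0$ this is immediate from completeness of $q_1$, resp.\ $q_0$, so assume $k_0,k_1\ne0$. The hypothesis $H_0\subsetneq H_1$ enters as follows: every positive element of $H_1\setminus H_0$ lies above $H_0$, so $a_0$, being right of $H_0$, lies below all of them; hence $\abs{a_0}<g$ for any $g\in(H_1\setminus H_0)_{>0}$, so $a_0$ lies in the convex hull $\bar H_1$ of $H_1$ in $\monster$, whereas $a_1>H_1$ yields $a_1>\bar H_1$. Suppose first $H_1\subsetneq M$; then $M/H_1$ is a nontrivial divisible (hence densely ordered) group by Fact~\ref{fact:regoag}, and embeds as an ordered subgroup of $\monster/\bar H_1$. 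In $\monster/\bar H_1$ the image of $t$ equals that of $k_1a_1$, and divisibility of $M/H_1$ forces the image of $a_1$, hence of $k_1a_1$, to fill the cut $0^+$ or $0^-$ of $M/H_1$ according to the sign of $k_1$; as this cut is non-realised, unwinding shows the cut of $t$ in $M$ is determined (it is the cut right of $H_1$, reflected if $k_1<0$). The case $H_1=M$ is handled separately and more easily: then $a_1$ lies above all of $M$ while $a_0\in\bar H_1$ still, so $t$ fills the $\pm\infty$ cut of $M$ according to the sign of $k_1$.

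\emph{The congruences.} Fix a prime $\primo$ and an exponent $\ell$. By hypothesis at least one of $q_0,q_1$ does not place its variable in a new coset modulo any power of $\primo$; relabelling if necessary (this affects only the current prime and is harmless, the argument being symmetric in the two indices) say it is $q_1$, so $q_1\vdash x^1\equiv_{\primo^\ell}m_\ell$ for a fixed $m_\ell\in M$ and it remains to locate $k_0x^0$ modulo $\primo^\ell M$. Writing $k_0=\primo^au$ with $\gcd(u,\primo)=1$, multiplication by $u$ is an automorphism of $\monster/\primo^\ell\monster$ fixing the image of $M$ setwise, so it suffices to analyse $\primo^ax^0$ modulo $\primo^\ell$. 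A short computation using torsion-freeness of $\monster$ and purity of $M\prec\monster$ shows that $\primo^aa_0$ is congruent modulo $\primo^\ell\monster$ to an element of $M$ exactly when $a_0$ is congruent modulo $\primo^{\ell-a}\monster$ to an element of $M$, in which case the residue is determined; and since $q_0$ is complete it decides which alternative holds. Hence the coset, or new-ness, of $t$ modulo $\primo^\ell$ is forced, and combining this with the cut computation completes the proof.

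The step I expect to be the main obstacle is the cut computation: turning the heuristic ``$k_0a_0$ is negligible against $k_1a_1$'' into a rigorous argument requires passing to the quotient $\monster/\bar H_1$ and carefully tracking how the relevant cuts of $M/H_1$ survive in it, together with the degenerate case $H_1=M$. The congruence part is routine bookkeeping once the hypothesis on new cosets is exploited to reduce to the prime-by-prime dichotomy above, and the opening reduction via quantifier elimination and the Chinese Remainder Theorem is standard.
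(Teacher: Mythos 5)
Your proof is correct, and the high-level structure (reduce to cuts and congruences via quantifier elimination and the Chinese Remainder Theorem, then handle the two separately) is the same as the paper's. The difference is in the mechanism for the cut computation: the paper invokes Lemma~\ref{lemma:clearden} (the set of elements right of a convex subgroup is closed under sum), which gives in one line that for $k_1\neq 0$ the element $k_0a_0+k_1a_1$ is right of $H_1$, hence has the same cut as $k_1a_1$. You instead pass to the quotient $\monster/\bar H_1$ by the convex hull of $H_1$, use that $M\cap\bar H_1=H_1$ to embed $M/H_1$, and invoke divisibility of $M/H_1$ there. This is a valid alternative; it is essentially re-deriving the content of Lemma~\ref{lemma:clearden} inline, and it does force you to split off the degenerate case $H_1=M$, which the closure-under-sum formulation avoids. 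Note also that what you actually need is not merely that $k_0a_0$ is negligible but that $k_1a_1$ itself is right of $H_1$ (so that ``the cut right of $H_1$'' is indeed the cut of $k_1x^1$); this too follows directly from Lemma~\ref{lemma:clearden}.

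On the congruence side, your argument is correct but considerably longer than necessary: the detour through $k_0=\primo^a u$, purity and torsion-freeness is not needed, since $q_0$ is already a complete $1$-type over $M$ in $L_\mathrm{Pres}$, hence decides all formulas $k_0x^0\equiv_{\primo^\ell} m$ for $m\in M$ directly; combined with the relabelled assumption that $q_1$ places $x^1$ in a realised coset modulo $\primo^\ell$, the coset of $k_0x^0+k_1x^1$ is forced immediately. The paper compresses this entire observation into ``by assumption,'' which is the intended one-line argument.
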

\begin{proof}
 By  Lemma~\ref{lemma:clearden} the cut of every $k_0x^0+k_1x^1$ is determined by $q_0(x^0)\cup q_1(x^1)$, and we conclude by assumption and quantifier elimination.
\end{proof}
\begin{pr}\label{pr:wortto1tp}
  Suppose that $q_H(x)\in\invtypes_1(\monster)$ is right of the convex subgroup $H$ and prescribes realised cosets modulo every $n$ for $x$. For an invariant \textasteriskcentered-type $q$ with all coordinates in the home sort, the following are equivalent.
  \begin{enumerate}
  \item\label{point:qhfun} For every (equivalently, some) $b\models q$, no type right of $H$ is realised in $\Span{\monster b}$.
  \item\label{point:qhwort} $q_H\wort q$.
  \item \label{point:qhcomm} $q_H$ commutes with $q$.
  \item \label{point:qhperp} $q_H\perp q$.
  \end{enumerate}
Moreover, if $q'$ is a \textasteriskcentered-type with no coordinates in the home sort, then $q_H\perp q'$.
\end{pr}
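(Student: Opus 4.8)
The plan is to establish the cycle of implications $(1)\Rightarrow(4)\Rightarrow(2)\Rightarrow(3)\Rightarrow(1)$. Two links are immediate: $(4)\Rightarrow(2)$ is the case $B=\monster$ of the definition of $\perp$, and $(2)\Rightarrow(3)$ is point~(2) of Remark~\ref{rem:wort}, since $q_H$ and $q$ are invariant. So the real content lies in $(3)\Rightarrow(1)$ and $(1)\Rightarrow(4)$. Throughout, fix a small model $M$ such that both $q_H$ and $q$ are $M$-invariant and, moreover, $M$ contains a subset cofinal in $H$ or one coinitial in $(\monster\setminus H)_{>0}$; one of these is available because $H$ is an invariant convex subgroup. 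Note that $\aut(\monster/M)$ fixes $q_H$, hence the cut of $q_H$, hence $H$ setwise; consequently the condition ``$z$ is right of $H$'' is $M$-invariant.

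For $(3)\Rightarrow(1)$ I argue by contraposition. Suppose $(1)$ fails: for some $b\models q$ there is an element $e=m+\bar k b\in\Span{\monster b}$, with $m\in\monster$ and a finitely supported $\bar k\in\mathbb Z^{<\omega}$ (necessarily $\bar k\neq 0$, since the cut ``right of $H$'' is non-realised in $\monster$), which is right of $H$ over $\monster$. Let $\theta(x,\bar y)$ be the $\monster$-formula ``$x>m+\bar k\bar y$''. By the definition of $\otimes$, $\theta\in q_H\otimes q$ iff $(q_H\invext\monster b)\proves x>e$, whereas $\theta\in q\otimes q_H$ iff $(q\invext\monster a)\proves \bar k\bar y<a-m$ for $a\models q_H$. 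In the cofinality case, an $M$-conjugate of $e$ inside $\monster$ lies in $(\monster\setminus H)_{>0}$, so $q_H$ forces $x<e$ and hence $\theta\notin q_H\otimes q$; on the other hand an $M$-conjugate $a''\in\monster$ of $a$ lies in $(\monster\setminus H)_{>0}$, which is above $e$ because $e$ is right of $H$, so $\bar k b<a''-m$ is forced and $\theta\in q\otimes q_H$. Thus the two products differ. In the coinitiality case both $M$-conjugates land inside $H$ and the two conclusions reverse, but the products still differ. Either way $q_H$ does not commute with $q$, i.e.\ $(3)$ fails.

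For $(1)\Rightarrow(4)$, fix an arbitrary $B\supseteq\monster$, let $a\models q_H\invext B$ and $\bar b\models q\invext B$; I must show that $\tp(a\bar b/B)$ is determined by $\tp(a/B)$ and $\tp(\bar b/B)$. By quantifier elimination in $L_\mathrm{Pres}$ it suffices to show that the congruence classes modulo $n$ and the cut over $B$ of every $\mathbb Z$-combination $k_0 a+\bar k\bar b+c$ ($c\in\Span B$) are so determined. The congruences never obstruct: $q_H$ prescribes only realised cosets, so the hypothesis of the lemma preceding the Proposition (that no prime places both types into new cosets) holds vacuously. For the cut, the case $k_0=0$ is trivial; if $k_0\neq 0$ then $k_0 a$ fills the cut of $(k_0)_*(q_H\invext B)$, whose restriction to $\monster$ is again ``right of $H$'' (or ``left of $-H$''), by divisibility of $M/H$ (Fact~\ref{fact:regoag}). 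By the elementary description of the sum of two cuts in a dense ordered group, $\tp(k_0 a+\bar k\bar b+c/B)$ is undetermined only when $k_0 a+\bar k\bar b+c$ is infinitely close, over $B$, to some $d\in\Span B$ — equivalently, when $\bar k\bar b-(d-c)$ fills over $B$, hence over $\monster$, the cut ``left of $-H$'' (or ``right of $H$''). Using $M$-invariance of $q$ together with the $M$-invariance of the predicate ``right of $H$'' established above, I replace the parameter $d-c\in\Span B$ by an $M$-conjugate in $\monster$, obtaining an element of $\Span{\monster b}$ that is right of $H$ over $\monster$ (clearing the coefficient $\bar k$ with Lemma~\ref{lemma:clearden} if desired), contradicting $(1)$. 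Hence no such obstruction exists, $(q_H\invext B)\wort(q\invext B)$ for every $B$, and $q_H\perp q$.

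The step I expect to be the main obstacle is $(1)\Rightarrow(4)$. Weak orthogonality over $\monster$ does not imply orthogonality in general (the Counterexample recorded above makes this explicit), so one cannot avoid controlling the extensions $q\invext B$ for all $B\supseteq\monster$, not merely for a saturated $\monster_1$. The device that makes this possible is pulling a parameter of $B$ back to an $M$-conjugate inside $\monster$, which is legitimate precisely because $q$ is $M$-invariant and — thanks to $H$ being an invariant convex subgroup — the predicate ``right of $H$'' is itself $M$-invariant; this is what allows the hypothesis $(1)$, a statement purely about $\Span{\monster b}$, to do its work over the whole of $B$.
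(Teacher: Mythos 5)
Your cycle of implications, $(1)\Rightarrow(4)\Rightarrow(2)\Rightarrow(3)\Rightarrow(1)$, differs from the paper's, which proves $(1)\Rightarrow(2)$, $(2)\Rightarrow(3)$, $(3)\Rightarrow(1)$, and then the remaining hard link $(2)\Rightarrow(4)$ (with $(4)\Rightarrow(2)$ trivial). Your $(3)\Rightarrow(1)$ is essentially the paper's argument, and $(4)\Rightarrow(2)\Rightarrow(3)$ are the same immediate observations. The difficulty is therefore concentrated in your $(1)\Rightarrow(4)$, which is not the route the paper takes, and there is a genuine gap in it.

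The problematic step is ``replace the parameter $d-c\in\Span B$ by an $M$-conjugate $d''$ in $\monster$''. The condition you want to transfer, that $\bar k\bar b-(d-c)$ realises the cut right of $H$ over $\monster$, is not a condition over $M\cup\set{d-c}$: it is the partial type $\set{\bar k\bar b-(d-c)>h : h\in H}\cup\set{\bar k\bar b-(d-c)<e : e\in\monster,\ e>H}$, whose parameters $h,e$ range over all of $\monster$. For $m\in M$, $M$-invariance of $q\invext B$ does give $\bar k\bar b<m+(d-c)\Leftrightarrow\bar k\bar b<m+d''$, since $m+(d-c)\equiv_M m+d''$; but for a general $e\in\monster$, $\tp(d-c/M)=\tp(d''/M)$ does not imply $\tp(e+(d-c)/M)=\tp(e+d''/M)$, so the transfer fails formula by formula. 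What invariance actually yields is only that $\bar k\bar b-d''$ is in the correct cut \emph{over $M$} (right of $H\cap M$), and passing from there to ``right of $H$ over $\monster$'' is precisely what is missing: there may be $e\in\monster$ with $e>H$ but $e$ below everything of $M$ that is $>H$, and such $e$ can land on the wrong side of $\bar k\bar b-d''$. The paper sidesteps this by proving $(2)\Rightarrow(4)$ instead: Corollary~\ref{co:1tpdeqsbgrp} together with Proposition~\ref{pr:perpdoms} and Fact~\ref{fact:wortdown} allow one to replace $q$ (after pushing forward by the relevant $\mathbb Z$-linear form, up to $\domeq$) by a $1$-type right of an invariant convex subgroup $H_1\neq H$; the cut of $k_xx+k_yy$ over $B$ is then controlled by the \emph{single} formula $kx<d<(k+1)x$ of Lemma~\ref{lemma:clearden}, which transfers from $\monster$ to $B$ by a one-parameter invariance argument. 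That reduction, which requires already having $(2)$ in hand, is exactly the mechanism your direct route from $(1)$ lacks.
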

\begin{proof}
  To show  $\ref{point:qhfun}\allora \ref{point:qhwort}$, consider $q_H(x)\cup q(y)$. By assumption on $q_H$ we only need to deal with inequalities of the form $kx+\sum_{i<\abs y} k_i y_i+d\ge 0$, but $\ref{point:qhfun}$ gives immediately that the cut of $kx$ in $\Span{\monster b}$ is determined.   If~\ref{point:qhfun} fails, as witnessed by $f(b)$, say, then $q_H(x)\otimes q(y)$ and $q(y)\otimes q_H(x)$ disagree on the formula $f(y)<x$, proving $\ref{point:qhcomm}\allora \ref{point:qhfun}$, and $\ref{point:qhwort}\allora \ref{point:qhcomm}$ holds for every type in every theory.

  We prove $\ref{point:qhwort}\allora\ref{point:qhperp}$, the converse being trivial. Suppose  that $B\supseteq \monster$ is such that $(q_H\invext B)\nwort (q\invext B)$.  The cosets modulo every $n$ of a realisation of $q_H$ are all realised in $\monster$, so there must be some inequality of the form $kx+\sum_{i<\abs y} k_i y_i+d\ge 0$, with $k_i\in \mathbb Z$ and $d\in \Span B$, that is not decided. Hence, if~\ref{point:qhperp} fails, it fails for a $1$-type $\tilde q$, namely the pushforward of $q$ under the map  $y\mapsto\sum_{i<\abs y} k_i y_i$. By Corollary~\ref{co:1tpdeqsbgrp} and Proposition~\ref{pr:perpdoms}, we may  assume $\tilde q$ is right of a convex subgroup. Therefore $q_H(x)$ and $\tilde q(z)$ are weakly orthogonal by~\ref{point:qhwort} and~\cite[Proposition~3.13]{invbartheory}, to the right of distinct (by weak orthogonality) convex subgroups, but the cut in $\Span B$ of $kx+z$ is not determined by $(q_H\invext B)(x)\cup (\tilde q\invext B)(z)$. This contradicts Lemma~\ref{lemma:clearden}.

  For the ``moreover'' part, by Proposition~\ref{pr:perpdoms} we may replace $q'$ with any domination-equivalent type, so we may assume, using Corollary~\ref{co:invQ} and Proposition~\ref{pr:perphom}, that $q'(z)$ is the type of an independent tuple, with $z_i\in\sort_{\primo_i}$. Let $H'$ be any invariant convex subgroup different from $H$, let $p_i$ be the $1$-type right of $H'$ in a new coset modulo $\primo_i$ and congruent to $0$ modulo every other prime, and let $q$ be the tensor product, in any order, of the $p_i$. Clearly $q\doms q'$ and, by construction, if $b\models q$ then no type right of $H$ is realised in $\Span{\monster b}$, so we conclude by Proposition~\ref{pr:perpdoms}.
\end{proof}

\begin{defin}
  Let $q$ be an invariant global \textasteriskcentered-type, and  $c\models q$.  Let $\mathcal H(q)$ be the set of cuts of convex subgroups of $\monster$ filled in $\Span{\monster c}$.
\end{defin}

\begin{thm}\label{thm:regoaddec1tp}
If $p,q$ are invariant \textasteriskcentered-types, then $p\doms q$ if and only if
 $\mathcal H(p)\supseteq \mathcal H(q)$ and  $\forall\primo\in \mathbb P\;\kappa_\primo(p)\ge\kappa_\primo(q)$.
Hence, $\class q$ is determined by $\mathcal H(q)$ and  $\primo\mapsto \kappa_\primo(q)$.
\end{thm}
\begin{proof}  Let $c\models q$, and write $c=c^0c^1$, with $c^0$ a tuple in the home sort and $c^1$ a tuple from the sorts $\sort_{\primo^n}$. By enlarging $c_1$ with at most $\abs c$ points of $\dcl(\monster c)$ if necessary, we may assume that it contains bases of all $\mathbb F_\primo$-vector spaces $\sort_\primo(\dcl(\monster c))$ over $\monster$. Observe that this is harmless domination-wise, and that it does not impact compatibility of $\otimes$ with $\doms$  by \cite[Proposition~1.23]{invbartheory}.

  Index on a suitable cardinal $\kappa$, bounded by the cardinality of $c^0$, the (necessarily invariant) convex subgroups $H_j$ whose cuts are filled in $\Span{\monster c^0}$. Note that, by Corollary~\ref{co:1tpdeqsbgrp}, we have  $\kappa\ne 0$ unless  $c^0$ is realised.

  For $j<\kappa$, let $q_j(y_j)$ be the type right of $H_j$ divisible by every nonzero integer. By Lemma~\ref{lemma:wortcsg} and Proposition~\ref{pr:wortto1tp}, the $q_j$ are orthogonal, and it follows from Proposition~\ref{pr:otimesperp} and compactness that their union is a complete type; call it $q_{\mathrm{H}}(y)$. Let $q_{\mathrm{Q}}(z)\coloneqq\tp(c_1/\monster)$.  By Proposition~\ref{pr:wortto1tp} and Proposition~\ref{pr:otimesperp} $q_{\mathrm{H}}\perp q_{\mathrm{Q}}$.  We prove that $q(x)$ is domination-equivalent to $q'(yz)\coloneqq q_{\mathrm{H}}(y)\otimes q_{\mathrm{Q}}(z)$. If $c^0\in \monster$, equivalently if $q_{\mathrm{H}}$ is realised, this is trivial, so we assume this is not the case.

To show $q'(yz)\doms q(x)$, let $b\in \dcl(\monster c)$ be maximal amongst the tuples with
 each $b_k$ in the cut of an invariant convex subgroup, and such that  if $k<k'$ then  $\Span{b_k}_{>0}<\Span{b_{k'}}_{>0}$. A maximal such $b$ exists because the size of $b$ is at most that of $c^0$,  by looking at $\mathbb Q$-linear dimension over $\monster$ in the divisible hull.  Since $c^0\notin \monster$, by Corollary~\ref{co:1tpdeqsbgrp}  there is a point of $\dcl(\monster c)$ in the cut of an invariant  convex subgroup, hence $b$ is nonempty. By \cite[Corollary~1.10]{choagqe} definable functions are piecewise affine and, by clearing denominators using Lemma~\ref{lemma:clearden}, we may assume that $b\in \Span{\monster c^0}$. 

Write $b_k=f_k(c^0)$, for suitable affine functions $f_k$. Let $M\smallprec \monster$ be large enough to contain the parameters of the $f_k$,  such that $q$ and $q'$ are $M$-invariant, and such that $M$ has no proper elementary i-extension. Let $r\in S_{qq'}(M)$ contain the following.
\begin{enumerate}
\item For each $k$, by choice of $q'$ there is $j<\kappa$ such that $y_j$ is in the same cut as $b_k$ according to $q'$. If the cut of $b_k$ has small cofinality on the right, put in $r$ the formula $f_k(x)>y_j$; if it has small cofinality on the left, put in $r$ the formula $f_k(x)<y_j$.
\item \label{point:gluecosets} For each $j<\abs{c^1}$, the formula $x_{\abs{c^0}+j}=z_{j}$.
\end{enumerate}
By Lemma~\ref{lemma:roag-multisorted},  point~\ref{point:gluecosets} above, the fact that $c^1$ contains bases of all $\mathbb F_\primo$-vector spaces $\sort_\primo(\dcl(\monster c))$ over $\monster$, and Corollary~\ref{co:ladder},  to prove  $q'\doms q$ it suffices to show that $q'\cup r$ decides the cut in $\monster$ of every  $\sum_i \delta_ix_i$. We first prove a special case.
\begin{claim}
  $q'\cup r$ entails the quantifier-free $\set{+,0,-,<}$-type of the $f_k(x)$ over $\monster$.
\end{claim}
\begin{claimproof}
It is enough to show that the cut of every $\sum_k \beta_k f_k(x)$ in $\monster$ is decided, where only finitely many $\beta_k\in \mathbb Z$ are nonzero. By choice of $r$ and Remark~\ref{rem:smallcof}, $q'\cup r$ determines the cut of each $f_k(x)$ over $\monster$. Moreover, $r$ contains the information that $\Span{f_k(x)}_{>0}<\Span{f_{k'}(x)}_{>0}$ for $k<k'$. By this, the fact that the $f_k(x)$ are right of convex subgroups, and Lemma~\ref{lemma:clearden},  the cut of $\sum_k \beta_k f_k(x)$ must be that of $\operatorname{sign}(\beta_k)f_k(x)$, with $k$  the largest such that $\beta_k\ne 0$.
\end{claimproof}
As $M$ has no proper elementary i-extension,  given a term $\sum_i \delta_ix_i$, by Proposition~\ref{pr:csgbij2} we can compose with an $M$-definable injective affine function and reduce to a term $\sum_i \gamma_ix_i+d$, with $d\in M$ and $\gamma_i\in \mathbb Z$, with cut in $M$ right of a convex subgroup. As $\tp(\sum_i  \gamma_ix_i+d/\monster)$ is $M$-invariant, $\sum_i \gamma_ix_i+d$ is in the cut of an $M$-invariant convex subgroup of $\monster$.  By maximality of $b$, there must be $k$ and  positive integers $n,m$ such that   $n b_k\le m\left(\sum_i \gamma_ix_i+d\right) \le (n+1) b_k$. Thus $r\proves  n f_k(x)\le m\left(\sum_i \gamma_ix_i+d\right) \le (n+1) f_k(x)$, 
and by the Claim $q'\doms q$.

Similar arguments show $q\doms q'$ and that, if $\mathcal H(p)\supseteq \mathcal H(q)$ and $\forall\primo\in \mathbb P\;\kappa_\primo(p)\ge\kappa_\primo(q)$, and $p'$ is defined analogously to $q'$, then $p'\doms q'$.  That $\mathcal H(p)\supseteq \mathcal H(q)$ is necessary to have $p\doms q$ follows from Proposition~\ref{pr:wortto1tp} and \cite[Proposition~3.13]{invbartheory}. As for $\forall\primo\in \mathbb P\;\kappa_\primo(p)\ge\kappa_\primo(q)$, if for some $\primo \in \mathbb P$ we have $\kappa_\primo(q)>\kappa_\primo(p)$ then we easily find a type in the quotient sorts  dominated by $q$ but not by $p$, a contradiction.
\end{proof}

\begin{pr}\label{pr:regisomon}
For all invariant \textasteriskcentered-types $p,q$ and $\primo \in \mathbb P$, we have $\mathcal H(p\otimes q)= \mathcal H(p)\cup \mathcal H(q)$ and $\kappa_\primo(p\otimes q)=\kappa_\primo(p)+\kappa_\primo(q)$.
\end{pr}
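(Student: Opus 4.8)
The plan is to compute both invariants from realisations. Fix $b\models q$ and $a\models p\invext\monster b$, so $(a,b)\models p\otimes q$; fix a small model $M_0\smallprec\monster$ over which $p$ and $q$ are invariant and containing the finitely many parameters used below; and record that $\Span{\monster ab}=\Span{\monster a}+\Span b$ as ordered abelian groups over $\monster$, while $\dcl(\monster ab)=\div(\Span{\monster ab})\cap\monster_1$ and likewise for $\dcl(\monster a),\dcl(\monster b)$ (so the $\dcl$'s add no infinitesimal over $\monster$ and no new coset modulo $\primo$ beyond the corresponding $\Span$'s). The only use of the hypothesis $(a,b)\models p\otimes q$, as opposed to an arbitrary common extension, is that $\tp(a/\monster b)$ and all its $\monster$-definable pushforwards are then $M_0$-invariant.

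\emph{The equality $\mathcal H(p\otimes q)=\mathcal H(p)\cup\mathcal H(q)$.} The inclusion $\supseteq$ is immediate: $\Span{\monster b}\subseteq\Span{\monster ab}$ with $b\models q$ gives $\mathcal H(q)$, and since $a\models p\restr\monster$ an automorphism fixing $\monster$ carries a witness from some $\Span{\monster a'}$, $a'\models p$, into $\Span{\monster a}\subseteq\Span{\monster ab}$, giving $\mathcal H(p)$. For $\subseteq$, take $e\in\Span{\monster ab}$ filling the cut of a convex subgroup $H$ of $\monster$ and write $e=e_a+e_b$ with $e_a\in\Span{\monster a}$, $e_b\in\Span b$. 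If $e_b\in\monster$ then $e\in\Span{\monster a}$ and $H\in\mathcal H(p)$; symmetrically if $e_a\in\monster$; more generally, if $e_a$'s cut, resp. $e_b$'s cut, is a translate by an element of $\monster$ of the $H$-gap, then by Lemma~\ref{lemma:clearden} some element of $\Span{\monster a}$, resp. $\Span{\monster b}$, fills the $H$-gap and we are done. In the remaining pathological case $e_a,e_b\notin\monster$ and, again by Lemma~\ref{lemma:clearden}, $e_a$ and $-e_b$ fill a single common nonrealised cut $\gamma$ of $\monster$ while $e=e_a-(-e_b)$ fills the strictly finer $H$-gap. I would exclude this as follows. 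If $H\neq\set0$: by Remark~\ref{rem:smallcof} one side of $\gamma$ — say the left — has small cofinality witnessed by a set in $M_0$; since in a regular oag every nonrealised cut has width $0$, for each $\varepsilon\in\monster_{>0}$ there are $\ell$ in that set and $r$ to the right of $\gamma$ with $r-\ell<\varepsilon$, and from $e_a<r$, $-e_b>\ell$ we get $0<e<\varepsilon$; thus $e$ is infinitesimal over $\monster$, contradicting that $e$ lies above every element of $H_{>0}\neq\emptyset$. If $H=\set0$: if $\set0\in\mathcal H(p)\cup\mathcal H(q)$ we are done, and otherwise $\dcl(\monster a)$ and $\dcl(\monster b)$ contain no infinitesimal over $\monster$, so the cut of $e_a$ over $\monster b$ is exactly $(-e_b)^+$, which is definable over $\monster b$; by Remark~\ref{rem:smallcof} and $M_0$-invariance of $\tp(e_a/\monster b)$ it is then $M_0$-definable, forcing $-e_b\in\dcl(M_0)\subseteq\monster$, contradicting $e_b\notin\monster$. (In the discrete case one first pushes forward modulo the standard part, reducing to the divisible oag $\monster/\mathbb Z$, where the width-$0$ argument applies verbatim.)

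\emph{The equality $\kappa_\primo(p\otimes q)=\kappa_\primo(p)+\kappa_\primo(q)$.} Put $G=\dcl(\monster ab)$, $G_a=\dcl(\monster a)$, $G_b=\dcl(\monster b)$, torsion-free abelian groups, and work in their reductions modulo $\primo$. Since each is the divisible hull of the corresponding $\Span$ intersected with $\monster_1$, the maps $G_a/\primo G_a\to G/\primo G$ and $G_b/\primo G_b\to G/\primo G$ are injective, and clearing $\primo$-power denominators in $\Span{\monster ab}=\Span{\monster a}+\Span b$ gives $G/\primo G=(G_a/\primo G_a)+(G_b/\primo G_b)$ inside $G/\primo G$. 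Granting the further equality $(G_a/\primo G_a)\cap(G_b/\primo G_b)=\monster/\primo\monster$ of subspaces of $G/\primo G$, additivity is the vector-space identity $\dim\bigl((U+V)/(U\cap V)\bigr)=\dim\bigl(U/(U\cap V)\bigr)+\dim\bigl(V/(U\cap V)\bigr)$ with $U=G_a/\primo G_a$, $V=G_b/\primo G_b$. This intersection equality is the mod-$\primo$ counterpart of the pathological-case analysis: it says $a$ and $b$ share no new $\primo$-divisibility datum over $\monster$. I would prove it by transporting the statement, via the maps $\pi_{\primo^n}$, into the fully embedded pure abelian groups $\sort_{\primo^n}(\monster)$ of Lemma~\ref{lemma:roag-multisorted} — trivial when $d_\primo<\infty$, since there these groups are finite and $\kappa_\primo\equiv0$ — and, inside them, ruling out an unexpected collision from the $M_0$-invariance of the relevant invariant extension, the module-theoretic tameness of $\sort_{\primo^n}(\monster)$ (weak minimality, one-basedness) making this routine.

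The step I expect to be the real obstacle is precisely the exclusion of pathological cancellation, in both its order-theoretic and its $\primo$-divisibility form; everything else is bookkeeping around $\Span{\monster ab}=\Span{\monster a}+\Span b$, the shape of invariant cuts (Remark~\ref{rem:smallcof}), Lemma~\ref{lemma:clearden}, and the observation that passing to an $\otimes$-product keeps $\tp(a/\monster b)$ and its pushforwards invariant over the small model $M_0$, which is what forbids $a$'s contribution to $\Span{\monster ab}$ from interfering with $b$'s.
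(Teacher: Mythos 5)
Your route is genuinely different from the paper's. The paper obtains the first equality by characterising $\mathcal H(q)$ as the set of $H$ with $q_H\centernot\perp q$ (Proposition~\ref{pr:wortto1tp}) and then applying Proposition~\ref{pr:otimesperp} together with Proposition~\ref{pr:perpdoms}; the $\kappa_\primo$ equality is observed to follow from the definition of $\otimes$. You instead compute directly from a realisation $(a,b)\models p\otimes q$, decomposing a witness $e\in\Span{\monster ab}$ as $e_a+e_b$.

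There is, however, a genuine gap in the $\mathcal H$ argument: the assertion that in a regular oag every nonrealised cut has width $0$ is false. If $H'\ne\set 0$ is a proper convex subgroup of $\monster$ and $t>H'$, the cut $\gamma$ right of $t+H'$ (left side $\set{m\mid m\le t+h\text{ for some }h\in H'}$) is nonrealised, and for every $\ell<\gamma<r$ one has $r-\ell>H'$, so the width of $\gamma$ is exactly $H'$. Such $\gamma$ does arise as the cut of an invariant $1$-type (take $H'$ of small cofinality and translate); so in the pathological case the bound you extract is only $0<e<r-\ell$ with $r-\ell>H'$, which does not contradict $e>H_{>0}$ when $H\subsetneq H'$. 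What is actually missing is a case split on \emph{which} convex-subgroup gap $\gamma$ is a translate of — via Proposition~\ref{pr:csgbij2} and Lemma~\ref{lemma:clearden} it is always a $\monster$-translate of some $H'$-gap — followed by an explicit computation, using the shape of the $M_0$-invariant extension $(p')\invext\monster b$, showing that $e=e_a-(-e_b)$ must then fill precisely the $H'$-gap, so $H=H'$ and you land back in your already-handled translate case. This is exactly the work the paper's $\perp$-machinery packages up. Your $\kappa_\primo$ argument is essentially fine: the intersection equality $(G_a/\primo G_a)\cap(G_b/\primo G_b)=\monster/\primo\monster$ does hold, and is in fact most quickly established directly from the definition of $\otimes$ and $M_0$-invariance (choose $d'\in\monster$ with $d'\equiv_{M_0}d$ for the parameter $d\in\monster b$ occurring in a witnessing congruence), without routing through the sorts $\sort_{\primo^n}$.
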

\begin{proof}
  By Proposition~\ref{pr:wortto1tp} $\mathcal H(q)$ is precisely the set of convex invariant subgroups $H$ such that $q\centernot\perp q_H$. By Proposition~\ref{pr:otimesperp}, we therefore have the first statement. The second one is an easy consequence of the definition of $\otimes$.
\end{proof}

Note that if $q\in \invtypes_{<\kappa^+}(\monster)$ then $\abs{\mathcal H(q)}$ and each $\kappa_\primo(q)$ are at most $\kappa$.
\begin{defin}
We denote by $\icsg(\monster)$  the set of invariant convex subgroups of $\monster$, and by $\mathscr P_{\le\kappa}(\icsg(\monster))$ the  monoid of its subsets of size at most $\kappa$ with union, partially ordered by inclusion.
\end{defin}

\begin{co}[{Theorem~\ref{athm:oagsdom}}]\label{co:isro}
For $T$  the theory of a regular oag and $\kappa$ a small infinite cardinal,  $\invtildestarof\kappa{\monster^\eq}$ is well-defined, and $\invtildestarof\kappa{\monster^\eq}\cong \mathscr P_{\le\kappa}(\icsg(\monster))\times \prod_{\mathbb P_T} \hat \kappa$.
  \end{co}
  \begin{proof}
    Compatibility of $\otimes$ and $\doms$  follows from Theorem~\ref{thm:regoaddec1tp} and  Proposition~\ref{pr:regisomon}.
The same results show that the map $\class p$ to $(\mathcal H(p),\primo\mapsto \kappa_\primo(p))$ is well-defined, an embedding of posets, and a morphism of monoids.  Surjectivity is easily checked.
  \end{proof}
In general, the embedding  $\invtildestarof\kappa\monster\into \invtildestarof\kappa{\monster^\eq}$ is not surjective, although its image may be easily computed.  We state the result of this computation, which we leave to the reader, and of the analogous ones for finitary types. Denote by $\prod^\mathrm{bdd}_{\mathbb P_T}\omega$ the  submonoid of $\prod_{\mathbb P_T}\hat\omega$ consisting of bounded sequences of natural numbers.
\begin{co}\label{co:itro}
  The monoids  $\invtildestarof\kappa\monster,\invtilde,\invtildeof{\monster^\eq}$ are all well-defined, and
  \begin{align*}
    \invtildestarof\kappa\monster\cong& \biggl(\mathscr P_{\le\kappa}(\icsg(\monster))\times \prod_{\mathbb P_T} \hat \kappa\biggr)\setminus \set{(a,b)\mid a=\emptyset, b\ne 0}\\
    \invtildeof{\monster^\eq}\cong& \biggl(\mathscr P_{<\omega}(\icsg(\monster))\times \prod^\mathrm{bdd}_{\mathbb P_T} \omega\biggr)\setminus  \set{(a,b)\mid a=\emptyset,
      \operatorname{supp}(b)\textnormal{ infinite}}\\
        \invtilde\cong& \biggl(\mathscr P_{<\omega}(\icsg(\monster))\times \prod^\mathrm{bdd}_{\mathbb P_T} \omega\biggr)\setminus  \set{(a,b)\mid a=\emptyset, b\ne0 }
  \end{align*}
\end{co}

\section{Pure short exact sequences}\label{sec:ses}
 We study pure short exact sequences of abelian structures  $0\to \mathcal A\xrightarrow{\iota}\mathcal  B \xrightarrow{\nu} \mathcal C\to 0$, where $\mathcal A$ and $\mathcal C$ may be equipped with extra structure. 
We view them as multi-sorted structures, and  use the relative quantifier elimination results from~\cite{acgz20} to describe the domination poset in terms of $\mathcal{A}$ and $\mathcal{C}$.  A decomposition of the form $\invtildeof{\operatorname{\mathcal A}(\monster)}\times \invtildeof{\operatorname{\mathcal C}(\monster)}$ only holds in special cases; in general we will need to look at \textasteriskcentered-types and introduce a family of imaginaries of $\mathcal{A}$ which depends on $\mathcal{B}$.

We refer the reader to~\cite[Subsection~4.5]{acgz20} for definitions. We adopt almost identical notations, with the following differences.  We write $\mathcal A$ for an abelian structure and $L$ for its language.
We denote by $\mathcal F$ a fundamental family of pp formulas for $\mathcal B$. The corresponding family of quotient sorts of $\mathcal A$ is denoted by $\mathcal A_{\mathcal F}$. An \emph{$\mathcal{A}$-sort} is simply a sort in $\mathcal A$. We write e.g.~$t(x)$ for a tuple of terms, $0$ for a tuple of zeroes of the appropriate length, etc. Tuples of the same length may be added, and tuples of appropriate lengths used as arguments, as in  $f(t(x,0)-d)=0$. 
\begin{eg}\label{eg:abstr}
 In the simplest abelian structures, namely  abelian groups, $\mathcal F\coloneqq\set{\exists y\; x=n\cdot y\mid n\in \omega}$ is always fundamental. In an arbitrary abelian structure, one may always resort to taking as $\mathcal F$ the trivially fundamental set of all pp formulas.
\end{eg}
\begin{rem}\label{rem:termhom}
In an $L$-abelian structure, each $L$-term $t(x)$ is built from homomorphisms  $f_j$ of abelian groups by taking $\mathbb Z$-linear combinations and compositions. Hence, $t(x)$ is itself a homomorphism of abelian groups.
\end{rem}

A short exact sequence of abelian groups $0\to \mathrm{A}\to \mathrm{B}\to \mathrm{C}\to 0$ is pure if and only if, for each $n$, we have $n\mathrm{B}\cap \mathrm{A}= n\mathrm{A}$. This holds, e.g.,  if $\mathrm{C}$ is torsion-free, and in particular in the two examples below. We may take as $\mathcal F$ that of Example~\ref{eg:abstr}.

\begin{eg}\label{eg:ses1}
Suppose that the expansion $L_\mathrm{ac}^*$ endows $\mathrm{A}$, $\mathrm{C}$ with the structure of oags. Note that one then recovers, definably, an oag structure on $\mathrm{B}$, induced by declaring that $\iota(\mathrm{A})$ is convex. Because of this, and of fact that the kernel of a morphism of oags is convex, this setting is equivalent to that of a short exact sequence of oags.
This will be used in Section~\ref{sec:owfmig}, with $\mathrm{B}$ an oag  and $\mathrm{A}$ a suitably chosen convex subgroup. The sorts $\mathrm{A}_\phi$ coincide with the quotients $\mathrm{A}/n\mathrm{A}$.
\end{eg}

\begin{eg}\label{eg:ses2}
In the valued field context (Section~\ref{sec:bvf}) we will deal with the  sequence $1\to \key^\times\to \RV\setminus \set 0\to \Gam\to 0$, which is pure since $\Gamma$ is torsion-free. The extra structure in $L_\mathrm{ac}^*$ is induced by the field structure on $\key$ and the order on $\Gam$. The sorts $\mathrm{A}_\phi$ are in this case $\key^\times/(\key^\times)^n$.
\end{eg}

 We may and will assume that, for each variable $x$ from an $\mathcal{A}$-sort $\mathrm{A}_s$, the formula  $\phi\coloneqq x=0$ is in $\mathcal F$, and identify $\mathrm{A}_s$ with $\mathrm{A}_\phi= \mathrm{A}_s/0\mathrm{A}_s$. In other words,   $\mathcal{A}\subseteq\mathcal A_\mathcal F$.

\begin{rem}\label{rem:split}
As pp formulas commute with cartesian products, every split short exact sequence is pure. Since purity is first-order, a short exact sequence is pure in case some elementarily equivalent structure splits.
Note that, even if a short exact sequence splits, it need not do so definably,
and that the definition of expanded pure short exact sequence does \emph{not} allow to add  splitting maps. If we add one, then matters simplify considerably. E.g., if in $L_\mathrm{ac}^*$ there is no symbol involving $\mathcal A$ and $\mathcal C$ jointly,   a splitting map makes the short exact sequence interdefinable with the disjoint union of $\mathcal A$ and $\mathcal C$, where $\invtilde$ decomposes as a product (Example~\ref{eg:disjunion}).
  \end{rem}

\begin{fact}[{\cite[Remark~4.21]{acgz20}}]\label{fact:sesqe}
Let $\phi(x^\mathrm{a}, x^\mathrm{b}, x^\mathrm{c})$ be an $L_\mathrm{abcq}^*$-formula with $x^\mathrm{a},x^\mathrm{b}$, $x^\mathrm{c}$  tuple of variables from the $\mathcal A_\mathcal F$-sorts, $\mathcal{B}$-sorts and  $\mathcal{C}$-sorts respectively. There are an $L_\mathrm{acq}^*$-formula $\psi$ and special terms $\sigma_i$ such that, in the  $L_\mathrm{abcq}^*$-theory of all expanded pure short exact sequences, we have $\phi(x^\mathrm{a}, x^\mathrm{b}, x^\mathrm{c})\coimplica \psi(x^\mathrm{a}, \sigma_1(x^\mathrm{b}),\ldots,\sigma_m(x^\mathrm{b}), x^\mathrm{c})$.
\end{fact}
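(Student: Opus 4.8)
The plan is to prove this by relative quantifier elimination, inducting on the structure of $\varphi$, the guiding principle being that a $\mathcal{B}$-sort variable may reach $\psi$ only through a special term. Most cases are packaging. The atomic case is already substantive: a $\mathcal{B}$-valued atom reduces, since $\mathcal{F}$ is fundamental for $\mathcal{B}$, to a conjunction of conditions $\varphi_k(s_k)$ with $\varphi_k\in\mathcal{F}$ and $s_k$ an $L_\mathrm{b}$-term; writing $s_k$ as its $x^\mathrm{b}$-part plus $\iota$ applied to its $x^\mathrm{a}$-part (terms are homomorphisms, Remark~\ref{rem:termhom}) and invoking purity — conditions~\ref{point:iota} and~\ref{point:nu} of Definition~\ref{defin:ses}, which together yield $b\in\varphi_k(\mathcal{B})$ if and only if $b\in\nu^{-1}(\varphi_k(\mathcal{C}))$ and $\rho_{\varphi_k}(b)=0$ — turns $\varphi_k(s_k)$ into the conjunction of an $L_\mathrm{c}$-condition on the $\nu$-images of $x^\mathrm{b}$ and an equation $\rho_{\varphi_k}(\cdot)+\pi_{\varphi_k}(\cdot)=0$, both in special terms and $\mathcal{A}_\mathcal{F}$-variables; $\mathcal{C}$- and $\mathcal{A}_\mathcal{F}$-valued atoms are handled similarly, with a case split whenever a $\rho_\varphi$ is applied to a $\mathcal{B}$-term possibly outside $\nu^{-1}(\varphi(\mathcal{C}))$. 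The asserted shape is plainly stable under Boolean combinations (union the lists of special terms) and under $\exists$ over an $\mathcal{A}_\mathcal{F}$- or $\mathcal{C}$-sort variable, which is simply absorbed into $\psi$ (it is allowed to carry quantifiers). Hence everything reduces to eliminating one $\exists z$, with $z$ of a $\mathcal{B}$-sort $\mathrm{B}_s$, from $\exists z\,\psi'(x^\mathrm{a},\sigma_1(x^\mathrm{b},z),\dots,\sigma_m(x^\mathrm{b},z),x^\mathrm{c})$ with $\psi'$ an $L_\mathrm{acq}^*$-formula.

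To do that I would existentially quantify the \emph{values} of the special terms. Discarding the $\sigma_i$ not mentioning $z$, the surviving ones are $\nu_s(z)$ — call it $v$ — and terms $\rho_{\varphi_i}(t_i(x^\mathrm{b},z))$ with $\varphi_i\in\mathcal{F}$, so the formula becomes $\exists v\,\exists u_1\dots\exists u_m\,\bigl(\psi'(x^\mathrm{a},u_1,\dots,u_m,x^\mathrm{c})\wedge R(u_1,\dots,u_m,v,x^\mathrm{b})\bigr)$, where $R$ abbreviates $\exists z\,\bigl(\nu_s(z)=v\wedge\bigwedge_i\rho_{\varphi_i}(t_i(x^\mathrm{b},z))=u_i\bigr)$. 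Since $\exists v$ and the $\exists u_i$ range over $\mathcal{C}$- and $\mathcal{A}_\mathcal{F}$-sorts, it suffices to show $R$ equivalent to an $L_\mathrm{acq}^*$-formula in the $u_i$, $v$, and finitely many special terms in $x^\mathrm{b}$. Crucially, $R$ is a \emph{pp} $L_\mathrm{abcq}$-formula in $v,\bar u,x^\mathrm{b}$ (a conjunction of equations followed by $\exists z$), so no Baur--Monk-type reduction is needed and the whole content of the Fact is the decomposition of such a pp-formula.

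For that decomposition I would argue as in the atomic case, but now chasing the existential quantifier. Split $t_i(x^\mathrm{b},z)=t_i(x^\mathrm{b},0)+t_i^z(z)$. The membership condition $t_i(x^\mathrm{b},z)\in\nu^{-1}(\varphi_i(\mathcal{C}))$ becomes, after applying $\nu$, an $L_\mathrm{c}$-condition $\varphi_i\bigl(\bar t_i(\nu(x^\mathrm{b}),v)\bigr)$ on $v$ and the special terms $\nu(x^\mathrm{b})$, not otherwise involving $z$; so it can be pulled out of $\exists z$, splitting $R$ into finitely many cases. In the case where the relevant memberships hold, $\rho_{\varphi_i}(t_i(x^\mathrm{b},z))=u_i$ reads $\exists a_i\,\bigl(\varphi_i(t_i(x^\mathrm{b},z)-\iota(a_i))\wedge\pi_{\varphi_i}(a_i)=u_i\bigr)$ for a suitable $\mathcal{A}$-tuple $a_i$, well-defined by $\iota^{-1}(\varphi_i(\mathcal{B}))=\varphi_i(\mathcal{A})$; pulling the $\exists\bar a$ out of $\exists z$ — they may stay, landing in $\mathcal{A}_\mathcal{F}$-sorts — leaves the innermost $\exists z\in\mathrm{B}_s$ of a conjunction of $\nu_s(z)=v$ with cosets of pp-subgroups of $\mathcal{B}$. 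In an abelian structure this is non-empty exactly when a suitable finite ``Chinese remainder'' amalgamation condition on the parameters holds; applying $\nu$ and $\iota$ and invoking purity once more converts this into an $L_\mathrm{c}$-condition on $v$ and $\nu(x^\mathrm{b})$ together with an $L_\mathrm{a}$-condition on $\bar a$ and on the $\nu$-, $\rho_\varphi$- and $\pi_\varphi$-images of the $t_i(x^\mathrm{b},0)$; any auxiliary pp-formula produced en route that is not in $\mathcal{F}$ is rewritten via $\mathcal{F}$ and $L$-terms, using that $\mathcal{F}$ is fundamental for $\mathcal{B}$. Collecting the finitely many special terms in $x^\mathrm{b}$ that occur as $\tau_1,\dots,\tau_k$ and folding $\exists\bar a$ and $\exists v$ into $\psi$ produces the required formula.

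The hard part is this last step: the solvability criterion for a finite system of pp-congruences over $\mathcal{B}$, combined with the non-additivity of the maps $\rho_\varphi$ (which forces the case split on $\nu^{-1}(\varphi(\mathcal{C}))$-memberships), and the verification that everything descends to $\mathcal{A}$ and $\mathcal{C}$ with $x^\mathrm{b}$ entering only through $\nu$ and the designated quotient maps. This is exactly where all three standing hypotheses are indispensable: homomorphy of $L$-terms, to split off the $z$-part; fundamentality of $\mathcal{F}$ for $\mathcal{B}$, to keep every pp-condition encountered expressible through the $\rho_\varphi$ and $\pi_\varphi$; and, most of all, the two purity conditions, without which the $\rho_\varphi$ would not even be well-defined and the amalgamation conditions would not descend. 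The remaining work, the bookkeeping of the finitely many cases, is tedious but routine once the analysis above is in place.
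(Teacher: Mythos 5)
Your proposal takes a genuinely different route from the paper. The paper's own proof is essentially a one-step reduction: for $\phi$ an $L_\mathrm{abc}^*$-formula with $x^\mathrm{a}$ from the $\mathcal{A}$-sorts, the statement \emph{is} \cite[Corollary~4.20]{acgz20}, cited without re-proof; the general case (with $\mathcal{A}_\mathcal{F}$-sorted variables) is then handled by introducing $\mathcal{A}$-sorted variables $y$ with $\pi(y)=x^\mathrm{a}$, rewriting $\phi$ as $\exists y\,(\theta(y,x^\mathrm{b},x^\mathrm{c})\land\pi(y)=x^\mathrm{a})$ for an $L_\mathrm{abc}^*$-formula $\theta$ via the definable interpretation of the $\mathcal{A}_\mathcal{F}$-sorts in $L_\mathrm{abc}^*$, applying the citation to $\theta$, and absorbing $\exists y$ into $\psi$. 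You instead sketch the full relative quantifier elimination from scratch: induction on formulas, a reduction of the single $\exists z$ over a $\mathcal{B}$-sort to the satisfiability of a pp system $R$ (a clean and correct observation — this is indeed why no Baur--Monk reduction is needed), a case split on $\nu^{-1}(\varphi(\mathcal{C}))$-memberships to cope with the non-additivity of $\rho_\varphi$, and a final appeal to a ``Chinese remainder'' criterion pushed through $\nu$ and $\iota$ using purity and fundamentality. That outline is sound and is, in spirit, the proof in \cite{acgz20}. What the paper's route buys is brevity and the avoidance of precisely the step you flag as the ``hard part'': the solvability criterion for the pp system and its descent to $L_\mathrm{acq}^*$ is the entire content of \cite[Corollary~4.20]{acgz20}, and you leave it as an assertion rather than carrying it out. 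So you are re-deriving the cited result rather than invoking it; there is no error, but you should notice that the substantive amalgamation step in your last paragraph is already available in the literature, and the paper's proof is really just the bookkeeping needed to pass from $\mathcal{A}$-sorted to $\mathcal{A}_\mathcal{F}$-sorted variables.
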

\begin{co}\label{co:fullembses}
  The $L_\mathrm{acq}^*$-reduct is fully embedded. In particular,  $\mathcal{A}$ and $\mathcal{C}$ are orthogonal if and only if they are such in the $L_\mathrm{acq}^*$-reduct.
\end{co}

 We show that expanded pure short exact sequences are controlled, domination-wise, by their $L_\mathrm{acq}^*$-part, provided we pass to \textasteriskcentered-types.  This is a necessity since, in general, there are finite tuples from $\mathcal{B}$ that cannot be domination-equivalent to any finitary tuple from the $L_\mathrm{acq}^*$-reduct; see Remark~\ref{rem:wesast}. 

\begin{pr}\label{pr:esesred}
In an expanded pure short exact sequence of $L$-abelian structures, let $\mathcal F$ be a fundamental family for $\mathcal{B}$, and let $\kappa \ge \abs L$ be a small cardinal. There is a family of $\kappa$-tuples of definable functions $\set{\tau^p\mid p\in S_{\kappa}(\monster)}$ such that:
    \begin{enumerate}    \item   Each function in $\tau^p$  is defined at realisations of $p$.
    \item Each $\tau^p$ is partitioned as $(\rho^p, \nu^p)$, where   each function in $\rho^p$ is either the identity  on some $\mathrm{A}_\phi$, or has domain a cartesian product of  $\mathcal{B}$-sorts and codomain one of the $\mathrm{A}_\phi$, and each function in $\nu^p$ is either the identity on a $\mathcal{C}$-sort, or one of the $\nu_s$.
    \item For each $p\in S_{\kappa}(\monster)$ we have $p\domeq \tau^p_*p$.
    \item\label{point:okwithotimes} For each $p_0,p_1\in \invtypes_{\kappa}(\monster)$ we have $p_0\otimes p_1\domeq \tau^{p_0}_*p_0\otimes \tau^{p_1}_*p_1$.
    \end{enumerate}
\end{pr}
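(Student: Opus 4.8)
We plan to build $\tau^p$ so that, for $a\models p$, the tuple $\tau^p(a)$ records precisely the data that the relative quantifier elimination of Fact~\ref{fact:sesqe} attaches to $a$; from this we read off $p\domeq\tau^p_*p$, and item~\ref{point:okwithotimes} will then follow essentially formally.

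\emph{Construction of $\tau^p$.} Fix $p\in S_{\kappa}(\monster)$ with variable tuple $x$. In $\nu^p$ we put the identity of $\mathrm C_s$ for each $\mathcal C$-sort variable of $x$, and the map $\nu_s$ for each $\mathcal B$-sort variable of $x$ of sort $\mathrm B_s$. In $\rho^p$ we put the identity of $\mathrm A_\phi$ for each $\mathcal A_\mathcal F$-sort variable of $x$, together with the map $x'\mapsto\rho_\phi(t(x'))$ for every $\phi(w)\in\mathcal F$, every finite subtuple $x'$ of the $\mathcal B$-sort variables of $x$, and every tuple $t$ of $L_\mathrm b$-terms such that $x'\mapsto t(x')$ is a map into $\mathrm B_w$. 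Since $\mathcal F$ is a set of $L$-formulas there are at most $\abs L\le\kappa$ of them; there are at most $\kappa$ finite subtuples of $x$ and, for prescribed domain and codomain, at most $\abs L+\aleph_0\le\kappa$ tuples of $L_\mathrm b$-terms. Hence $\tau^p\coloneqq(\rho^p,\nu^p)$ has length $\le\kappa$, and after padding with repetitions we may regard it as a $\kappa$-tuple. Conditions~(1) and~(2) of the statement hold by construction.

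\emph{Domination.} That $p\doms\tau^p_*p$ is the standard fact that a global type dominates any pushforward of itself under a tuple of $\emptyset$-definable functions, witnessed by $\tp\bigl((a,\tau^p(a))/\emptyset\bigr)$ for $a\models p$. For the reverse $\tau^p_*p\doms p$, take as witness $r\in S_{\tau^p_*p,p}(\emptyset)$ the type $\tp\bigl((\tau^p(a),a)/\emptyset\bigr)$; since $r$ contains $z=\tau^p(x)$, it suffices to show that for every $L_\mathrm{abcq}^*$-formula $\theta(x,d)$ with $d\in\monster$ the condition $\theta(x,d)$ is, modulo $z=\tau^p(x)$, equivalent to an $L_\mathrm{acq}^*$-formula in $z$ with parameters in $\monster$ — for then $\tau^p_*p$ decides it. The $\mathcal A_\mathcal F$- and $\mathcal C$-sort coordinates of $x$ occur literally among those of $z$, so we may assume $\theta$ involves only $\mathcal B$-sort coordinates $x^\mathrm b$ of $x$. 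Turning the coordinates of $d$ into variables, applying Fact~\ref{fact:sesqe}, and substituting $d$ back, $\theta(x^\mathrm b,d)$ becomes an $L_\mathrm{acq}^*$-formula applied to terms $\nu_s(x^\mathrm b_i)$ (coordinates of $\nu^p$), to elements of $\mathcal C(\monster)$ of the form $\nu_s(d_j)$, and to terms $\rho_\phi(t(x^\mathrm b,d^\mathrm b))$. Since $L_\mathrm b$-terms are additive (Remark~\ref{rem:termhom}), $t(x^\mathrm b,d^\mathrm b)=t(x^\mathrm b,0)+t(0,d^\mathrm b)$, so $\rho_\phi(t(x^\mathrm b,d^\mathrm b))=\rho_\phi(s(x^\mathrm b)+e)$ with $s(x^\mathrm b)\coloneqq t(x^\mathrm b,0)$ an $L_\mathrm b$-term in $x^\mathrm b$ and $e\coloneqq t(0,d^\mathrm b)\in\mathcal B(\monster)$. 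The remaining, and main, point is that each $\rho_\phi(s(x^\mathrm b)+e)$ is an $L_\mathrm{acq}^*(\monster)$-definable function of the $\nu^p$- and $\rho^p$-coordinates of $z$, with auxiliary parameters lying in $\mathcal A_\mathcal F(\monster)\cup\mathcal C(\monster)$. This step — absorbing into the $\mathcal A_\mathcal F$- and $\mathcal C$-sorts the $\mathcal B$-sort parameters occurring inside the maps $\rho_\phi$ — is the technical heart and the main obstacle: it is precisely the statement that the $L_\mathrm{acq}^*$-reduct (Corollary~\ref{co:fullembses}) controls $\mathcal B$, and has to be extracted from the proof of the relative quantifier elimination in~\cite{acgz20}, using the additivity of terms and the explicit description of the $\rho_\phi$, rather than from the black-box Fact~\ref{fact:sesqe} alone.

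\emph{Compatibility with $\otimes$.} For $p,q\in\invtypes_{\kappa}(\monster)$, Fact~\ref{fact:rightenough} applied to the two inequalities constituting $p\domeq\tau^p_*p$ gives $p\otimes q\domeq\tau^p_*p\otimes q$. Writing $g\coloneqq\tau^q$, we have $(\mathrm{id}\times g)_*(\tau^p_*p\otimes q)=\tau^p_*p\otimes g_*q$ by uniqueness of invariant extensions, so $\tau^p_*p\otimes q\doms\tau^p_*p\otimes\tau^q_*q$ is again a pushforward; conversely, a small type witnessing $g_*q\doms q$ — which holds by the previous paragraph — extends along $\tau^p_*p$ via~\cite[Lemma~1.13]{invbartheory}, exactly as in the proof of Proposition~\ref{pr:perpdoms}, to a witness of $\tau^p_*p\otimes\tau^q_*q\doms\tau^p_*p\otimes q$. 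Chaining the equivalences yields $p\otimes q\domeq\tau^p_*p\otimes\tau^q_*q$, which is item~\ref{point:okwithotimes}.
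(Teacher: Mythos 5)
Your construction of $\tau^p$ differs from the paper's in a way that turns out to matter: you include the unshifted maps $x'\mapsto\rho_\phi(t(x'))$, whereas the paper's $\rho^p$ consists of maps of the form $\rho_\phi\bigl(t(x^\mathrm b,0)-d_{p,\phi,t,x^\mathrm b}\bigr)$, where the parameter $d_{p,\phi,t,x^\mathrm b}\in\monster$ is chosen, \emph{depending on $p$}, so that $p$ proves $t(x^\mathrm b,0)-d_{p,\phi,t,x^\mathrm b}\in\nu^{-1}(\phi(\mathcal C))$ (when such a $d$ exists). This shift is not cosmetic: $\rho_\phi$ is defined to be identically $0$ outside $\nu^{-1}(\phi(\mathcal C))$, so if $p$ proves $s(x^\mathrm b)\notin\nu^{-1}(\phi(\mathcal C))$ but $s(x^\mathrm b)-d\in\nu^{-1}(\phi(\mathcal C))$ for some $d\in\monster$, then your coordinate $\rho_\phi(s(a^\mathrm b))$ is just $0$ and carries no information, while $\rho_\phi(s(a^\mathrm b)-d)$ is exactly the quantity needed to decide the relevant $L_\mathrm{acq}^*$-formulas. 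Consequently the ``main point'' you isolate and defer — that $\rho_\phi(s(x^\mathrm b)+e)$ is $L_\mathrm{acq}^*(\monster)$-definable from your chosen coordinates — is not a technicality to be extracted from~\cite{acgz20}; it is simply false for your $\tau^p$. The paper closes this gap not by opening the black box Fact~\ref{fact:sesqe}, but by building the right (type-dependent) coordinates in the first place, and then observing that additivity of terms plus the chosen shifts give access to every $\rho_\phi(t(x^\mathrm b,d))$.

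The $\otimes$-compatibility step has a second, independent gap. You argue that a witness for $\tau^q_*q\doms q$ ``extends along $\tau^p_*p$ \ldots\ exactly as in the proof of Proposition~\ref{pr:perpdoms}''. But the proof of Proposition~\ref{pr:perpdoms} uses $p_0\perp q$ precisely at the step where the witness is pushed past a third type; here you have no orthogonality between $\tau^p_*p$ and $q$, and indeed the implication $q_0\doms q_1\Rightarrow p\otimes q_0\doms p\otimes q_1$ is the very thing Proposition~\ref{pr:comptransfer} is designed to transfer — so assuming it at this point is essentially circular. The paper instead proves item~\ref{point:okwithotimes} structurally: using the definition of $\otimes$ and invariance, it shows that for a term $t(x^\mathrm b,y^\mathrm b,d)$, the shift $d_{p\otimes q,\phi,t}$ may be decomposed so that access to $\rho_\phi(t(x^\mathrm b,y^\mathrm b,d))$ reduces to access to $\rho_\phi(t(x^\mathrm b,0,0)-d_{p,\phi,t,x^\mathrm b})$ and $\rho_\phi(t(0,y^\mathrm b,0)-d_{q,\phi,t,y^\mathrm b})$; that is, $\tau^{p\otimes q}$ may be taken to be the concatenation of $\tau^p$ and $\tau^q$, from which $p\otimes q\domeq\tau^p_*p\otimes\tau^q_*q$ follows directly from item~3. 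That structural identification is the content of the proposition, and your proposal does not supply it.
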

\begin{proof}
  Let $abc\models p(x^\mathrm{a},x^\mathrm{b},x^\mathrm{c})$, in the notation of Fact~\ref{fact:sesqe}.    Define the tuples $\nu^p$  and $\rho^p$ as follows.
 For each coordinate in $x^\mathrm{c}$ of sort $\mathrm{C}_s$, put in $\nu^p$ the corresponding identity map on $\mathrm{C}_s$.
 For each coordinate in $x^\mathrm{b}$ of sort $\mathrm{B}_s$, put in $\nu^p$ the corresponding map $\nu_s\from \mathrm{B}_s\to \mathrm{C}_s$.
 For each coordinate in $x^\mathrm{a}$ of sort $\mathrm{A}_\phi$, put in $\rho^p$ the corresponding identity map on $\mathrm{A}_\phi$.
 For each finite tuple of $L_\mathrm{b}$-terms $t(x^\mathrm{b}, w)$ and $\phi\in \mathcal F$, if there is $d\in \monster$ such that $p\proves t(x^\mathrm{b}, 0)-d\in \nu\inverse(\phi(\mathcal{C}))$, choose such a $d$, call it $d_{p,\phi,t,x^\mathrm{b}}$, and put in $\rho^p$ the map $\rho_\phi(t(x^\mathrm{b}, 0)-d_{p,\phi,t,x^\mathrm{b}})$.

Let $\tau^p$ be the concatenation of $\rho^p$ and $\nu^p$, let $q(y)\coloneqq\tau_*^p p(x)$, let $D_p$ be the set of all $d_{p,\phi, t,x^\mathrm{b}}$ as above, and let $r(x,y)\in S_{pq}(D_p)$  contain $y=\tau(x)$. Clearly $p\cup r\proves q$. By Fact~\ref{fact:sesqe}, to show $q\cup r\proves p$ it suffices to prove that $q\cup r$ recovers the formulas $\phi(x^\mathrm{a},d^\mathrm{a}, \sigma_1(x^\mathrm{b},d^\mathrm{b}),\ldots
  \sigma_m(x^\mathrm{b}, d^\mathrm{b}), x^\mathrm{c}, d^\mathrm{c})$ implied by $p$, where the $\sigma_i$ are special terms, $\phi$ is an $\mathcal L_{\mathrm{acq}}^*$-formula, and the $d^\bullet$ are tuples of parameters from the appropriate sorts of $\monster$.  Let us say that $q\cup r$ \emph{has access to} the term (with parameters) $\sigma(x^\mathrm{b},d)$ iff for some $\monster$-definable function $f$ we have $q(y)\cup r(x,y)\proves f(y)=\sigma(x^\mathrm{b},d)$. We show that $q\cup r$ has access to all special terms with parameters, hence $q\cup r\proves p$.

By construction, $q\cup r$ has access to each $\nu_s(x^\mathrm{b}_i)$. Because $\nu$ is a homomorphism of $L$-structures, $q\cup r$ also has access to each $\nu(t_0(x^\mathrm{b},d))$, for $t_0$ an $L_\mathrm{b}$-term. In particular, $q\cup r$ decides whether a given tuple $t(x^\mathrm{b},d)$ of $L_\mathrm{b}$-terms with parameters is in $\nu\inverse(\phi(\mathcal{C}))$ or not. If not, then $q\cup r$ entails  $\rho_\phi(t(x^\mathrm{b},d))=0$.

If instead $q\cup r \proves t(x^\mathrm{b},d)\in \nu\inverse(\phi(\mathcal{C}))$, by Remark~\ref{rem:termhom} $t(x^\mathrm{b},d)=t(x^\mathrm{b},0)+t(0,d)$, and by construction and the fact that $p$ is consistent with $q\cup r$ we have that  $p$ entails $t(x^\mathrm{b},0)-d_{p,\phi,t,x^\mathrm{b}}\in \nu\inverse(\phi(\mathcal{C}))$.  As this formula is over $D_p$, it is in $r$. Hence
\[
  q\cup r\proves t(0,d)+d_{p,\phi,t,x^\mathrm{b}}= t(x^\mathrm{b},0)+t(0,d)-(t(x^\mathrm{b},0)-d_{p,\phi,t,x^\mathrm{b}})\in \nu\inverse(\phi(\mathcal{C}))
\]
But $t(0,d)+d_{p,\phi,t,x^\mathrm{b}}\in \monster$, and $\rho_\phi\restr\nu\inverse(\phi(\mathcal{C}))$ is a homomorphism of $L$-structures. Because of this, and because $q\cup r$ has access to $\rho_\phi(t(x^\mathrm{b},0)-d_{p,\phi,t,x^\mathrm{b}})$ by construction, it also has access to $\rho_\phi(t(x^\mathrm{b},0)-d_{p,\phi,t,x^\mathrm{b}})+\rho_\phi(t(0,d)+d_{p,\phi,t,x^\mathrm{b}})=\rho_\phi(t(x^\mathrm{b},d))$. 

We are left to prove~\ref{point:okwithotimes}. By definition of $\otimes$, if ${p_0}(x)\otimes {p_1}(y)\proves t(x^\mathrm{b},y^\mathrm{b},d)\in \nu\inverse(\phi(\mathcal{C}))$, then there is $\tilde b\in \monster$ with ${p_0}(x)\proves t(x^\mathrm{b},\tilde b, d)\in \nu\inverse(\phi(\mathcal{C}))$. Hence, by arguing as above, ${p_0}\proves t(x^\mathrm{b},0,0)-d_{{p_0},\phi,t,x^\mathrm{b}}\in \nu\inverse(\phi(\mathcal{C}))$. So ${p_0}(x)\otimes {p_1}(y)$ entails
\[
 \nu\inverse(\phi(\mathcal{C}))\owns t(x^\mathrm{b},y^\mathrm{b},d)-t(x^\mathrm{b},0,0)+d_{{p_0},\phi,t,x^\mathrm{b}}= t(0,y^\mathrm{b},0)+t(0,0,d)+d_{{p_0},\phi,t,x^\mathrm{b}}
\]
and because $t(0,0,d)+d_{{p_0},\phi,t}\in \monster$, by construction we have ${p_1}(y)\proves t(0,y^\mathrm{b},0)-d_{{p_1},\phi,t,y^\mathrm{b}}\in\nu\inverse(\phi(\mathcal{C}))$. 
Similar arguments show that, in order to have access to $\rho_\phi(t(x^\mathrm{b},y^\mathrm{b},d))$, it is enough to have access to $\rho_\phi(t(x^\mathrm{b},0,0)-d_{{p_0},\phi,t,x^\mathrm{b}})$ together with $\rho_\phi(t(0,y^\mathrm{b},0)-d_{{p_1},\phi,t,y^\mathrm{b}})$, and the conclusion follows.
\end{proof}

\begin{co}[{Theorem~\ref{athm:ses}}]\label{co:decLtps}
Suppose that $\monster$ is an expanded pure short exact sequence of $L$-abelian structures and $\kappa\ge \abs L$ is a small cardinal.
  \begin{enumerate}
  \item There is an isomorphism of posets $\invtildestarof{\kappa}{\monster}\cong \invtildestarof{\kappa}{\monster\restr L_\mathrm{acq}^*}$.
  \item\label{point:decLtpscomptrans} If $\otimes$ respects $\doms$ in $\monster\restr L_\mathrm{acq}^*$, then the same is true in $\monster$, and the above is also an isomorphism of monoids.
  \item  If  $\mathcal{A}$ and $\mathcal{C}$ are orthogonal, then there is an isomorphism of posets $\invtildestarof{\kappa}\monster\cong \invtildestarof{\kappa}{\operatorname{\mathcal A}_\mathcal F(\monster)}\times \invtildestarof{\kappa}{\operatorname{\mathcal C}(\monster)}$.
Moreover, if $\otimes$ respects $\doms$ in  both $\operatorname{\mathcal A}_\mathcal F(\monster)$ and $\operatorname{\mathcal C}(\monster)$, then the same is true in $\monster$, and the above is also an  isomorphism of  monoids.
  \end{enumerate}
\end{co}
\begin{proof}
 By  Fact~\ref{fact:fullembemb} we have an embedding of posets $\invtildestarof{\kappa}{\monster\restr L_\mathrm{acq}^*}\into \invtildestarof{\kappa}{\monster}$. This embedding is surjective by Proposition~\ref{pr:esesred}, its inverse being induced by the maps $\tau$, hence an isomorphism.
  For~\ref{point:decLtpscomptrans}, by Proposition~\ref{pr:esesred} we may apply  Proposition~\ref{pr:comptransfer} to the family of sorts $\mathcal A_\mathcal F \mathcal C$.
We conclude by combining~\ref{point:decLtpscomptrans} with Corollary~\ref{co:orthsortsiso}.
\end{proof}
\begin{rem}\label{rem:variants}
Variants of  Fact~\ref{fact:sesqe} for settings such as abelian groups augmented by an absorbing element are presented in~\cite[Section~4]{acgz20}. These yield variants of  Proposition~\ref{pr:esesred} and its consequences, with no significant difference in the proofs.
\end{rem}
 Specialised to abelian groups, the results above enjoy a form of local finiteness.
\begin{notation}\label{notation:rhon}
For the rest of the section, $L$ is just the language of abelian groups, and $\mathcal F$ the family of formulas $\set{\exists y\; x=n\cdot y\mid n\in \omega}$. We will write $\rho_n\from \mathrm{B}\to \mathrm{A}/n\mathrm{A}$ in place of $\rho_\phi\from \mathrm{B}\to \mathrm{A}_\phi$, and identify $\mathrm{A}$ with $\mathrm{A}/0\mathrm{A}$ for notational convenience.
\end{notation}
\begin{defin}
  A \textasteriskcentered-type $p(x)$ is \emph{locally finitary} iff $x$ has finitely many coordinates of each sort.
\end{defin}
\begin{pr}\label{pr:eqsttp}
Consider a pure short exact sequence of abelian groups equipped with an $L_\mathrm{abcq}^*$-structure. Let $p(x)$ be a locally finitary global type. Then, in Proposition~\ref{pr:esesred}, we may choose  $\tau^p$ in such a way that $\tau^p_* p$ is locally finitary. 
\end{pr}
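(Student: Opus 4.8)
The plan is to keep the construction of $\tau^p$ from the proof of Proposition~\ref{pr:esesred} essentially as is, and only trim its $\rho^p$-part, which is the sole potential source of non-local-finiteness. Recall that $\rho^p$ consists of the identity maps on the sorts $\mathrm{A}/m\mathrm{A}$ indexed by the coordinates of $x^\mathrm{a}$ — already a locally finitary family, since $p$ is locally finitary — together with, for each $n\in\omega$ and each $L_\mathrm{b}$-term $t$ such that $p\proves t(x^\mathrm{b},0)-d\in\nu\inverse(n\mathrm{C})$ for some $d$, one map $\rho_n\bigl(t(x^\mathrm{b},0)-d_{p,\phi_n,t,x^\mathrm{b}}\bigr)\from\mathrm{B}_{x^\mathrm{b}}\to\mathrm{A}/n\mathrm{A}$, where $\phi_n$ abbreviates the pp formula $\exists z\;x=n\cdot z$. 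Fixing $n$, this second family may have countably many members, and the point is that finitely many suffice. Since $L$ is the language of abelian groups, after setting the parameter variables to $0$ every $L_\mathrm{b}$-term $t(x^\mathrm{b},0)$ is a $\mathbb{Z}$-linear combination $\sum_{i<k}m_i x^\mathrm{b}_i$ with $k\coloneqq\abs{x^\mathrm{b}}<\omega$, and we identify such terms with elements of $\mathbb{Z}^k$.

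For each $n$, let $\Lambda_n\subseteq\mathbb{Z}^k$ be the set of $t$ with $p\proves t(x^\mathrm{b},0)-d\in\nu\inverse(n\mathrm{C})$ for some $d\in\mathrm{B}(\monster)$. As $\nu\inverse(n\mathrm{C})$ is a subgroup of $\mathrm{B}$ and $(t+t')(x^\mathrm{b},0)=t(x^\mathrm{b},0)+t'(x^\mathrm{b},0)$, the set $\Lambda_n$ is a subgroup of $\mathbb{Z}^k$, hence free of rank $r_n\le k$. I would fix a basis $t^{(1)}_n,\dots,t^{(r_n)}_n$ of $\Lambda_n$ with corresponding witnesses $d^{(s)}_n\coloneqq d_{p,\phi_n,t^{(s)}_n,x^\mathrm{b}}\in\mathrm{B}(\monster)$, and replace the offending family by the finitely many maps $\rho_n\bigl(t^{(s)}_n(x^\mathrm{b},0)-d^{(s)}_n\bigr)$, $1\le s\le r_n$, as $n$ ranges over $\omega$. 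The resulting $\tau^p$ has at most $\abs{x^\mathrm{c}}+\abs{x^\mathrm{b}}$ coordinates into $\mathrm{C}$ and, for each $n$, at most (the number of $x^\mathrm{a}$-coordinates of sort $\mathrm{A}/n\mathrm{A}$)$\,+\,r_n$ coordinates into $\mathrm{A}/n\mathrm{A}$; all of these are finite, so $\tau^p$, and hence $\tau^p_*p$, is locally finitary.

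It remains to verify that the trimmed $\tau^p$ still satisfies the conclusions of Proposition~\ref{pr:esesred}; conclusions~(1) and~(2) are structural and immediate, so only~(3) and~(4) are at issue. Inspecting that proof, the only property of $\rho^p$ used is that $q\cup r$ \emph{has access to} every term $\rho_n(t(x^\mathrm{b},0)-d_{p,\phi_n,t,x^\mathrm{b}})$, and this I would re-establish for the trimmed family as follows. If $p\centernot\proves t(x^\mathrm{b},0)-d\in\nu\inverse(n\mathrm{C})$ then $q\cup r$, having access to $\nu(t(x^\mathrm{b},0))$ through the coordinates $\nu_s(x^\mathrm{b}_i)$ and $\nu(d)$ being a constant of $\monster$, proves $\rho_n(t(x^\mathrm{b},0)-d)=0$. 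Otherwise $t\in\Lambda_n$; writing $t=\sum_{s=1}^{r_n}a_s t^{(s)}_n$ with $a_s\in\mathbb{Z}$, the fact that each $t^{(s)}_n(x^\mathrm{b},0)-d^{(s)}_n$ lies in $\nu\inverse(n\mathrm{C})$ only involves the $\nu_s(x^\mathrm{b}_i)$, hence is pushed forward into $q=\tau^p_*p$ and recorded by $q\cup r$; combined with additivity of $\rho_n$ on $\nu\inverse(n\mathrm{C})$ this gives
\[
  \rho_n\Bigl(t(x^\mathrm{b},0)-\sum_{s=1}^{r_n}a_s d^{(s)}_n\Bigr)=\sum_{s=1}^{r_n}a_s\,\rho_n\bigl(t^{(s)}_n(x^\mathrm{b},0)-d^{(s)}_n\bigr),
\]
to which $q\cup r$ has access, while $\rho_n(t(x^\mathrm{b},0)-d)$ differs from the left-hand side by $\rho_n\bigl(\sum_s a_s d^{(s)}_n-d\bigr)$, a constant of $\mathrm{A}/n\mathrm{A}(\monster)$. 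Hence $q\cup r$ has access to $\rho_n(t(x^\mathrm{b},0)-d)$, and conclusion~(3) follows as in Proposition~\ref{pr:esesred}; repeating the computation with $x^\mathrm{b}$ replaced by the $\mathcal B$-variables of a second type, and splitting mixed terms as in that proof, yields~(4). The one delicate point — more bookkeeping than obstacle — is making sure that $q\cup r$, and not merely $p$, certifies the membership facts needed to invoke additivity of $\rho_n$, which holds precisely because those facts are visible on the $\nu$-coordinates. The genuinely new input over Proposition~\ref{pr:esesred} is simply that, $L$ being abelian groups, the $\mathcal B$-terms in the finitely many variables $x^\mathrm{b}$ range over the finitely generated group $\mathbb{Z}^k$, so the a priori infinite clause in the construction of $\tau^p$ is generated by finitely many of its members.
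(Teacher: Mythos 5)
Your proposal is correct and takes essentially the same approach as the paper: your $\Lambda_n$ is precisely the paper's $K_n^p$, both arguments hinge on finite generation of subgroups of $\mathbb Z^{\abs{x^\mathrm{b}}}$ to trim $\rho^p$ to finitely many maps per modulus $n$, and both re-establish access to arbitrary $\rho_n(t(x^\mathrm{b},d))$ by writing $t$ as a $\mathbb Z$-combination of generators and using additivity of $\rho_n$ on $\nu\inverse(n\mathrm{C})$. The paper spells out the verification of conclusion~(4) a bit more explicitly (proving $K_n^{p\otimes q}=K_n^p\times K_n^q$ directly), but your brief remark on splitting mixed terms points at the same computation.
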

\begin{proof}
Write $p(x)=p(x^\mathrm{a},x^\mathrm{b},x^\mathrm{c})$ as in the proof of Proposition~\ref{pr:esesred}, and recall that an $L$-term is just a $\mathbb Z$-linear combination.
For each $n\in \omega$,  consider the subgroup 
  \[
K_n^p\coloneqq    \set{k\in \mathbb Z^{\abs {x^\mathrm{b}}}\mid \exists d\in \operatorname{B}(\monster)\; p\proves k\cdot x^\mathrm{b}-d \in \nu\inverse(n\mathrm{C})}
  \]
  of $\mathbb Z^{\abs {x^\mathrm{b}}}$, say generated by $k^n_0,\ldots, k^n_{m(n)}$. Choose $d_{p,n,i}$ witnessing $k^n_i\in K_n^p$.   Proceed as in Proposition~\ref{pr:esesred} but, instead of putting in  $\rho^p$ each $\rho_\phi(t(x^\mathrm{b},0)-d_{p,\phi,t})$,  use a locally finite $\rho^p$ extending  $(\rho_n(k^n_i\cdot x^\mathrm{b}-d_{p,n,i}))_{n\in \omega, i\le m(n)}$. Besides this,  $\tau^p$ contains a finite tuple of identity maps and finitely many $\nu$, therefore $\tau^p_*p$ is locally finitary. 

The proof of Proposition~\ref{pr:esesred} now goes through, with a pair of modifications which we now sketch. The first one concerns proving access to each $\rho_n(t(x^\mathrm{b},d))$. 
Fix $n$ and $t(x^\mathrm{b},d)$. Without loss of generality $d$ is a singleton and $t(x^\mathrm{b},d)=\ell \cdot x^\mathrm{b}-d$.
 If $p\proves t(x^\mathrm{b},d)\in \nu\inverse(n\mathrm{C})$, by definition we have  $\ell\in K_n^p$, so we may write $\ell=\sum_{i\le m(n)} e_i k^n_i$ for suitable $e_i\in \mathbb Z$. This allows us to rewrite
\[
  t(x^\mathrm{b},d)=\ell\cdot x^\mathrm{b}-d=\left(\sum_{i\le m(n)} e_i k^n_i\right)\cdot x^\mathrm{b}-d=\sum_{i\le m(n)} e_i (k^n_i\cdot x^\mathrm{b}-d_{p,n,i})+\sum_{i\le m(n)} e_id_{p,n,i}-d
\]
Since $\ell\cdot x^\mathrm{b}-d$ and all $k^n_i\cdot x^\mathrm{b}-d_{p,n,i}$ are in $\nu\inverse(n\mathrm{C})$, so is $\sum_{i\le m(n)} e_id_{p,n,i}-d$. Since $\rho_n\restr \nu\inverse(n\mathrm{C})$ is a homomorphism and $\sum_{i\le m(n)} e_id_{p,n,i}-d\in \monster$, we have that $q\cup r$ has access to $\rho_n(t(x^\mathrm{b},d))$.

Finally, proving point~\ref{point:okwithotimes} of Proposition~\ref{pr:esesred} boils down to showing $K_n^{p\otimes q}=K_n^p\times K_n^q$, where we identify e.g.~$K_n^p$ with $K_n^p\times \set{0}$. Since by construction $K_n^p\cap K_n^q=\set{0}$, one only needs to show generation. We leave the easy proof to the reader. 
\end{proof}
\begin{rem}
In the case of abelian groups, we therefore have an analogue of Corollary~\ref{co:decLtps} where $\kappa$-types are replaced by locally finitary $\omega$-types.
\end{rem}
\begin{co}\label{co:abgrpses}
Let $\monster$ be an expanded pure short exact sequences of abelian groups where, for all $n>0$, the sort $\mathrm{A}/n\mathrm{A}$ is finite. If $\mathrm{A}$ and $\mathrm{C}$ are orthogonal, there is an isomorphism of posets $\invtilde\cong \invtildeof{\operatorname{A}(\monster)}\times \invtildeof{\operatorname{C}(\monster)}$. 
If $\otimes$ respects $\doms$ in $\operatorname{A}$ and $\operatorname{C}$, then $\otimes$ respects $\doms$, and the above is an isomorphism of monoids.
\end{co}
\begin{proof}
Use Proposition~\ref{pr:eqsttp} and observe that for each $p$ we may replace $\tau^p$ by its composition with the projection on the nonrealised coordinates of $\tau^p_*p$ and still have the same results. If  $\mathrm{A}/n\mathrm{A}$ is finite for all $n>0$ and $p$ is finitary, this yields another finitary type. The conclusion now follows as in the proof of Corollary~\ref{co:decLtps}.
\end{proof}

\begin{rem}
The $A/nA$ are in general necessary to obtain a product decomposition. E.g., let $\mathrm{A}$ be a
    regular oag divisible by all $\primo\in \mathbb P\setminus \set 2$, and with $[\mathrm{A}:2\mathrm{A}]$
    infinite, and let $\mathrm{C}$ be a nontrivial divisible oag.   
    The expanded
    short exact sequence $0\to \mathrm{A}\to \mathrm{B}\to \mathrm{C}\to 0$ induces a group ordering on $\mathrm{B}$ (Example~\ref{eg:ses1}).  
Let $p(y)$ concentrate on
    $\mathrm{B}$, at $+\infty$,  in a new coset modulo $2\mathrm{B}$. For every nonrealised $1$-type $q$ of an element of sort $\mathrm{A}$ divisible by all $n$, we have $p\wort q$.  It follows that $p$    cannot dominate any nonrealised $p'$ in a cartesian power of $\mathrm{A}$:    such a $p'$ must have a coordinate in a nonrealised cut, and hence    dominate a type $q$ as above.     Hence, if we had a product
    decomposition as in Corollary~\ref{co:abgrpses}, then $p$ would be
    domination-equivalent to a type in a cartesian power of $\mathrm{C}$. This
    is a contradiction, because $\mathrm{C}$ is orthogonal to
    $(\mathrm{A}/n\mathrm{A})_{n<\omega}$, while $p$ dominates a nonrealised type in
    $\mathrm{A}/2\mathrm{A}$.
  \end{rem}
  \begin{rem}\label{rem:wesast}
  Analogously,  $\omega$-types are a necessity: let $\mathrm{A}$ be a regular oag with each $[\mathrm{A}:n\mathrm{A}]$ infinite, $\mathrm{C}$ a nontrivial divisible oag, and take as $p\in S_\mathrm{B}(\monster)$ the type at $+\infty$ in a new coset of each $n\mathrm{A}$. For each $n>1$, there is a nonrealised $1$-type  $q_n$  of sort $\mathrm{A}/n\mathrm{A}$ such that $p\doms q_n$. One shows that the only way for a finitary type in $((\mathrm{A}/n\mathrm{A})_{n\in \omega}, \mathrm{C})$ to dominate all of the $q_n$ is to have a nonrealised coordinate in the sort $\mathrm{A}$, hence to dominate a type orthogonal to $p$.
\end{rem}

\section{Finitely many definable convex subgroups}\label{sec:owfmig}
Using the previous two sections we may describe $\invtilde$ in oags with finitely many $L_\mathrm{oag}$-definable convex subgroups.  The arguments still work if the subgroups are defined ``by fiat'' using additional predicates, so we work in this setting.

\begin{defin}\label{defin:finintsgr}
 Let $G$ be an oag with unary predicates $H_0,\ldots, H_s$,  each defining a convex subgroup, with $0=H_0\subsetneq H_1\subsetneq\ldots\subsetneq H_{s-1}\subsetneq H_s= G$, and such that $G$ has no other definable convex subgroup. Denote by $\mathbb T$ the union of the set of prime powers with $\set{0}$ and work with the following sorts.
 For $0\le i< s$, a sort $\mathrm{S}_i$ for $G/H_{i}$, carrying $L_\mathrm{oag}$ together with predicates for $H_j/H_i$, for $i<j<s$.
 For $1\le i\leq s$ and $n\in \mathbb T$, sorts $\mathrm{Q}_{i,n}$ for $H_{i}/(nH_{i}+H_{i-1})$, carrying $L_\mathrm{ab}$. We denote by $\mathcal Q_i$ the family of sorts $(\mathrm{Q}_{i,n})_{n\in \mathbb T}$.
  We include the canonical projection and inclusion maps together with, for each $n\in \mathbb T$ and $1\le i\le s-1$, the maps $\rho_{n,i}\from \mathrm{S}_{i-1}\to \mathrm{Q}_{i,n}$ as in Notation~\ref{notation:rhon}, relative to the short exact sequence $0\to \mathrm{Q}_{i,0}\to \mathrm{S}_{i-1}\to \mathrm{S}_i\to 0$.
\end{defin}
For $1\le i<s$ the short exact sequence  $0\to H_i/H_{i-1}\to G/{H_{i-1}}\to G/{H_i}\to 0$ is pure and, as pointed out in Example~\ref{eg:ses2}, interdefinable with an expanded pure short exact sequence of abelian groups.
\begin{lemma}
Every $H_{i+1}/H_i$ is regular. For each $i\ne j$, the sort $\mathrm{S}_i$ is fully embedded as an oag, the family $\mathcal Q_i$ (with $L_\mathrm{oag}$-structure on $\sort_{i,0}$, $L_\mathrm{ab}$-structure on other sorts, and projection maps) is fully embedded,  orthogonal to $\mathrm{S}_i$, and orthogonal to $\mathcal Q_j$.
\end{lemma}
\begin{proof}
Apply Fact~\ref{fact:regoag} to $H_{i+1}/H_i$ , whose only definable convex subgroups are itself and $\set 0$. The rest is by  Corollary~\ref{co:fullembses}, Remark~\ref{rem:wort}, and induction on $i$.  
\end{proof}

\begin{thm}\label{thm:NEWfdcsg}
Let $G$ be as in Definition~\ref{defin:finintsgr}, and $\kappa$  a small infinite cardinal. Then  $\otimes$ respects $\doms$, and $\invtildestarof{\kappa}{\monster^\eq}\cong \prod_{i=1}^{s} \invtildestarof{\kappa}{\operatorname{\mathcal Q}_i(\monster)}$.
\end{thm}

\begin{proof}
 By the previous lemma, Corollary~\ref{co:decLtps}, Corollary~\ref{co:isro} and induction we get that $\otimes$ respects $\doms$, and $\invtildestarof{\kappa}{\monster}\cong \prod_{i=1}^{s} \invtildestarof{\kappa}{\operatorname{\mathcal Q}_i(\monster)}$.
  
 If the $H_i$ are $L_\mathrm{oag}$-definable,\footnote{Oags with finitely many definable convex subgroups are known as the oags \emph{of finite regular rank}. Note that every $H_i$ must be fixed by every automorphism, and is therefore $\emptyset$-definable.}  a result of Vicar\'ia (\cite[Theorem~5.1]{vicEI})  yields weak elimination of imaginaries in the language with sorts $\mathrm{S}_i/n \mathrm{S}_i$ for $0\le i < s$ and $n\in \mathbb T$,\footnote{Vicar\'ia uses sorts indexed by $n\in \omega$; as in Remark~\ref{rem:ppenough}, it suffices to work with $n\in \mathbb T$.} and one may check that her proof goes through also in the case where the $H_i$ are explicitly named by predicates, i.e.~not necessarily $L_\mathrm{oag}$-definable.

After adding the sorts from Vicar\'ia's result, for $1\le i\le s$  the short exact sequences $0\to \mathrm{Q}_{i,n}\to \mathrm{S}_{i-1}/n \mathrm{S}_{i-1}\to \mathrm{S}_i/n \mathrm{S}_i\to 0$ are fully embedded, and Corollary~\ref{co:decLtps} may thus be applied to these. From this, we obtain an embedding     $\prod_{i=1}^{s} \invtildestarof{\kappa}{\operatorname{\mathcal Q}_i(\monster)}\into  \invtildestarof{\kappa}{\monster^\eq}$. 
As $\mathrm{Q}_{s,n}=\mathrm{S}_{s-1}/n \mathrm{S}_{s-1}$, by induction on $i$ one obtains surjectivity of this embedding. We leave to the reader to check this, along with the proof of transfer of compatibility of $\otimes$ and $\doms$, by showing that every \textasteriskcentered-type is dominated by its image among a suitable tuple of definable maps.
\end{proof}

\section{Benign valued fields}\label{sec:bvf}
In this section $T$ is a complete $\RVses$-expansion of a theory of henselian valued fields with elimination of $\K$-quantifiers and ``enough maximal saturated models'' 
(see below for the precise definitions). We show the existence of an isomorphism $\invtilde\cong\invtildeof{\RVsesof(\monster)}$. In particular, our results hold in any \emph{benign} valued field in the sense of~\cite{touchardburden}\footnote{\cite[Definition~1.57]{touchardburden} allows $\set k\textrm{-}\set\Gam$-expansions in the definition of benign. Since we are shortly going to allow more general expansions, the difference is immaterial for our purposes.}, i.e., in any henselian valued field which is of equicharacteristic $0$, or algebraically closed, or algebraically maximal Kaplansky of characteristic $\primo>0$.

Associate to a valued field $\K$  the pure (Example~\ref{eg:ses2}) short exact sequence  $1\to \key^\times\to \K^\times/(1+\mathfrak m)\to \Gam\to 0$.
 Add absorbing elements $0$, $0$, $\infty$, 
 and view it as a short exact sequence of abelian monoids   $1\to \key\to \K/(1+\mathfrak m)\to \Gam\cup\set{\infty}\to 0$. We may  harmlessly conflate the two settings (Remark~\ref{rem:variants}) and write $\Gamma$ for $\Gamma\cup\set\infty$.

The middle term $\K/(1+\mathfrak m)$ is called the \emph{leading term structure}  $\RV$, and comes with a natural map $\rv\from \K\to \K/(1+\mathfrak m)=\RV$ through which the valuation $v\from \K\to \Gam$ factors.
Besides the structure of a (multiplicatively written) monoid, $\RV$ is equipped with a ``partially defined sum'': a ternary relation defined as $\oplus(x_0,x_1,x_2)\overset{\mathrm{def}}{\iff} \exists y_0, y_1, y_2\in \K\; \left(y_2=y_0+y_1\land \bigwedge_{i<3}\rv(y_i)=x_i\right)$.
When there is a unique $x_2$ such that $\oplus(x_0,x_1,x_2)$, we write $x_0\oplus x_1=x_2$, and say that $x_0\oplus x_1$ \emph{is well-defined}. It turns out that $\rv(x)\oplus\rv(y)$ is well-defined if and only if $v(x+y)=\min\set{v(x), v(y)}$. If we say that $\bigoplus_{i<\ell}{x_i}$ \emph{is well-defined}, we mean that, regardless of the choice of parentheses and order of the summands,  the ``sum'' is well-defined and always yields the same result.

Let $\RVses$  be the expansion of
$1\to \key\xrightarrow{\iota} \RV\xrightarrow{v} \Gam\to 0$ by the field structure on $\key$ and the order on $\Gam$.
 This induces an expansion of $\RV$, which is precisely that given by multiplication and  $\oplus$~\cite[Lemma~5.17]{acgz20},
 is biinterpretable with $\RVses$, and can be axiomatised independently~\cite[Appendix~B]{touchardburden}. Hence, we may view $\RV$ as a standalone structure $(\RV, \cdot, \oplus)$, fully embedded in $(\K, \RV, \rv)$, and in $\RVses$.

By the (Short) Five Lemma, an extension of valued fields is immediate, i.e.~does not change $\key$ nor $\Gam$, if and only if it does not change $\RVses$.

In this section,   $L$ has sorts  $\K, \key, \RV, \Gam$, function symbols $\rv\from \K\to \RV$, $\iota\from \key\to \RV$, $v\from \RV\to \Gam$. We abuse the notation and also write $v$ for the composition $v\circ \rv$.
The sorts $\K$ and $\key$ carry disjoint copies of the language of rings, 
 $\Gam=\Gam\cup\set\infty$ carries the (additive) language of ordered groups, together with an absorbing element $\infty$ and an extra constant symbol $v(\operatorname{Char}(\K))$, and $\RV$ carries the (multiplicative) language of groups, together with an absorbing element $0$ and a ternary relation symbol $\oplus$.
 We denote by $\RVses$ the reduct to the sorts $\key,\RV,\Gam$.
 There may be other arbitrary symbols on $\RVses$, i.e., as long as they do not involve $\K$.
 An \emph{$\RVses$-expansion} of a theory $T'$ of valued fields is a complete $L$-theory $T\supseteq T'$. Until the end of the section, $T$ denotes such a theory.
We identify  $\key$ with the image of its embedding $\iota$ in $\RV$. 
\begin{rem}\label{rem:denefpas}
 Angular components factor through the map $\rv$, yielding a splitting of $\RVses$. Therefore,
the Denef--Pas language (and  each of its  $\set{\key,\Gam}$-expansions\footnote{A $\set{\key,\Gam}$-expansion is one where the new symbols only involve the sorts $\key$ and $\Gamma$, possibly simultaneously. If we want to exclude the latter possibility, we speak of $\set{\key}\textrm{-}\set{\Gam}$-expansions.}) may be seen as an $\RVses$-expansion.  In that case $\RVses$ is definably isomorphic to $\key\times \Gamma$.
\end{rem}

\begin{fact}\label{fact:rvqe}
 Fix a language $L$ as above.   The theory of all $\RVses$-expansions of benign valued fields eliminates $\K$-sorted quantifiers. \end{fact}
\begin{proof}
In equicharacteristic this follow from \cite[Th\'eor\`{e}me~2.1]{Del-These}. The residue characteristic $0$ case is explicitly done in \cite[Theorem~B]{Basarab_1991} (see also \cite[Corollary~2.2]{Kuh}), the algebraically maximal Kaplansky case in \cite[Theorem~2.6]{Kuh} (see~\cite[Corollary~A.3]{hhqe} for a modern treatement). The algebraically closed case is folklore (see, e.g., \cite[Fact~2.4]{imagscvf}).
\end{proof}

\begin{rem}
If $T$ eliminates $\K$- quantifiers, then every formula is equivalent to one of the form $\phi(x,\rv(f_0(y)),\ldots,\rv(f_m(y)))$, 
where  $\phi(x,\bla z0,m)$ is a formula in $\RVses$, $x$ and $z$ tuples of $\RVses$-variables,  $y$ a tuple of $\K$-variables, and the $f_i$ polynomials over $\mathbb Z$. In particular, $\RVses$ (with the restriction of $L$ to its sorts) is fully embedded.
\end{rem}
\begin{proof}
  By inspecting the formulas without $\K$-sort quantifiers and observing that, for example, if $y$ is of sort $\K$ then $T\proves y=0\coimplica \rv(y)=0$.
\end{proof}
\begin{defin}
  Let $K_0\subseteq K_1$ be an extension of valued fields. A basis $(a_i)_{i}$ of a $K_0$-vector subspace of $K_1$ is \emph{separating} iff for all finite tuples $d$ from $K_0^\ell$  and pairwise distinct $i_j$, we have $v\left(\sum_{j<\ell}d_j a_{i_j}\right)=    \min_{j<\ell} \bigl(v(d_j)+v(a_{i_j})\bigr)$. If every finite dimensional $K_0$-vector subspace has such a basis, the extension is called \emph{separated}.
\end{defin}

\begin{fact}\label{fact:seprv}
  A basis $(a_i)_{i}$ is separating if and only if each sum 
  \(
    \bigoplus_{j<\ell} \rv(d_j)\rv(a_{i_j})
  \)
  is well-defined. If this is the case, it equals $\rv(\sum_{j<\ell}d_j a_{i_j})$.
\end{fact}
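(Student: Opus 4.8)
The plan is to reduce everything to the basic fact recalled just before the statement, namely that $\rv(x)\oplus\rv(y)$ is well-defined if and only if $v(x+y)=\min\set{v(x),v(y)}$, in which case it equals $\rv(x+y)$. Since $\rv$ is multiplicative, $\rv(d_j)\rv(a_{i_j})=\rv(d_j a_{i_j})$, so the sums in the statement are of the shape $\bigoplus_{j<\ell}\rv(c_j)$ with $c_j\coloneqq d_j a_{i_j}$, and $v(c_j)=v(d_j)+v(a_{i_j})$. Summands with $c_j=0$ are harmless, since $\rv(0)$ is absorbed by $\oplus$ and $v(0)=\infty$ affects no minimum, so throughout one may freely discard them and assume all $d_j\neq 0$.

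The technical heart is an auxiliary lemma: for any tuple $(c_j)_{j<\ell}$ of nonzero elements of a valued field, the sum $\bigoplus_{j<\ell}\rv(c_j)$ is well-defined if and only if $v\bigl(\sum_{j\in S}c_j\bigr)=\min_{j\in S}v(c_j)$ for every nonempty $S\subseteq\set{0,\dots,\ell-1}$, and when this holds $\bigoplus_{j<\ell}\rv(c_j)=\rv\bigl(\sum_{j<\ell}c_j\bigr)$. I would prove both directions by induction on $\ell$, using that an arbitrary parenthesized and reordered computation of a sum over an index set $S$ is, at its outermost step, of the form $\bigl(\bigoplus_{j\in S_0}\rv(c_j)\bigr)\oplus\bigl(\bigoplus_{j\in S_1}\rv(c_j)\bigr)$ for a partition $S=S_0\sqcup S_1$ into nonempty parts. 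For the forward direction, well-definedness of the whole sum forces well-definedness of every such sub-computation, so by induction $v(\sum_{j\in S_k}c_j)=\min_{S_k}v(c_j)$, and then the basic fact applied to the outermost $\oplus$ yields $v(\sum_{j\in S}c_j)=\min_{S}v(c_j)$. For the converse, the hypothesis restricted to subsets of $S_0$, resp.\ $S_1$, feeds the inductive hypothesis on the two sub-computations, giving that each is well-defined with value $\rv(\sum_{j\in S_k}c_j)$; then $v(\sum_{j\in S_0}c_j)=\min_{S_0}v(c_j)$, $v(\sum_{j\in S_1}c_j)=\min_{S_1}v(c_j)$ and $v(\sum_{j<\ell}c_j)=\min_{j<\ell}v(c_j)$ are exactly what the basic fact needs to conclude that the outermost $\oplus$ is well-defined with value $\rv(\sum_{j<\ell}c_j)$, independently of the partition and of the sub-computations.

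Granting the lemma, the equivalence is immediate. If $(a_i)_i$ is separating, then for pairwise distinct $i_0,\dots,i_{\ell-1}$, any $d\in K_0^\ell$ and any nonempty $S$, applying the defining equality to the subfamily indexed by $S$ gives $v(\sum_{j\in S}d_j a_{i_j})=\min_{j\in S}(v(d_j)+v(a_{i_j}))=\min_{j\in S}v(c_j)$; so by the lemma $\bigoplus_{j<\ell}\rv(d_j)\rv(a_{i_j})$ is well-defined and equals $\rv(\sum_{j<\ell}d_j a_{i_j})$. Conversely, if every such sum is well-defined, taking $S$ to be the full index set in the lemma gives $v(\sum_{j<\ell}d_j a_{i_j})=\min_{j<\ell}(v(d_j)+v(a_{i_j}))$, which is precisely the separating condition.

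The only genuine subtlety is the bookkeeping in the auxiliary lemma: making precise that ``well-defined'' quantifies over all parenthesizations and orderings, and that the recursive decomposition into a single outermost $\oplus$ of two sub-groupings captures all of them. Once that recursive description is written down, the inductive step is routine, and the rest of the argument is a direct translation between the valuation-theoretic and the $\RVses$-theoretic formulations.
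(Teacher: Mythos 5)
The paper states this result as a \emph{Fact} with neither proof nor citation — it is treated as folklore about leading-term structures, so there is no argument of record to compare against. Your proof is correct, and the route you take (reduce via multiplicativity of $\rv$ and removal of zero summands to an auxiliary lemma about arbitrary nonzero tuples $(c_j)_j$, then induct on the binary tree of $\oplus$'s) is the natural, and as far as we know the standard, way to verify it.

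The one point worth sharpening in a final write-up is the exact inductive claim. The cleanest formulation is: if some fixed parenthesization of $\bigoplus_{j\in S}\rv(c_j)$ has every internal $\oplus$ well-defined, then that computation evaluates to $\rv\bigl(\sum_{j\in S}c_j\bigr)$ and forces $v\bigl(\sum_{j\in S}c_j\bigr)=\min_{j\in S}v(c_j)$. Proving this by induction on $\abs S$ (splitting at the outermost $\oplus$ into $S=S_0\sqcup S_1$, exactly as you describe) gives both directions of your auxiliary lemma at once: for (a)$\Rightarrow$(b), any nonempty $S'\subseteq\set{0,\dots,\ell-1}$ can be realised as the index set of a subtree of some parenthesization of the full sum, which is valid by hypothesis, so the claim for $S'$ gives the valuation identity; for (b)$\Rightarrow$(a), the partition argument shows that every parenthesization is valid and evaluates to $\rv\bigl(\sum_{j<\ell}c_j\bigr)$, which in particular is independent of bracketing and ordering. (It is also worth noting in passing, as you implicitly use at each node, that the basic $\oplus$ identity is well-posed on $\rv$-classes: if $\rv(x')=\rv(x)$, $\rv(y')=\rv(y)$ and $v(x+y)=\min(v(x),v(y))$, then $v\bigl((x'+y')-(x+y)\bigr)>v(x+y)$, so $\rv(x'+y')=\rv(x+y)$.)
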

\begin{lemma}\label{lemma:seplift}
  Let $p\in \invtypes_{\K^{\le\omega}}(\monster, M_0)$,   $M_0\preceq M\smallprec \monster\subseteq B$,   $a\models p\invext B$, and $(f_i)_{i\in I}$  a family of $M$-definable functions $\K^{\omega}\to \K$ such that $(f_i(a))_{i\in I}$ is a separating basis of the $\Kof(M)$-vector space they generate.
If $M$ is  $\abs{M_0}^+$-saturated, or $p$ is definable,  then  $(f_i(a))_{i\in I}$ is a separating basis of the $\Kof(B)$-vector space they generate.
\end{lemma}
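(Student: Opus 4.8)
The plan is to reduce the statement to a compactness argument, showing that the only obstruction to a sum $\bigoplus_{j<\ell}\rv(d_j)\rv(f_{i_j}(a))$ failing to be well-defined — for $d$ a finite tuple from $\Kof(B)$ — can be pushed down to $\Kof(M)$, where it is ruled out by hypothesis. By Fact~\ref{fact:seprv}, $(f_i(a))_{i\in I}$ is a separating basis over $\Kof(B)$ if and only if for every finite tuple of pairwise distinct indices $i_0,\ldots,i_{\ell-1}$ and every $d\in \Kof(B)^\ell$ with all $d_j\neq 0$, the sum $\bigoplus_{j<\ell}\rv(d_j)\rv(f_{i_j}(a))$ is well-defined. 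Equivalently, recalling $v(x+y)=\min\set{v(x),v(y)}$ exactly when $\rv(x)\oplus\rv(y)$ is well-defined, we need: for no nonempty proper subset, do the partial sums collide. So the failure of separation is witnessed by a finite tuple $d$ from $\Kof(B)$, indices $i_0,\ldots,i_{\ell-1}$, and a splitting of the sum into two parts whose ``leading terms'' cancel.

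First I would observe that, since $p$ is $M_0$-invariant and $a\models p\invext B$, the type $\tp(a/B)$ is $M_0$-invariant, so for any $\K$-formula $\theta(x,y)$ (with $y$ of sort $\K$ and $x$ the $\omega$-tuple of $\K$-variables of $p$) and parameters $b,b'\in \Kof(B)$ with $b\equiv_{M_0} b'$, we have $\theta(a,b)\in\tp(a/B)$ iff $\theta(a,b')\in\tp(a/B)$. Now suppose toward a contradiction that separation fails over $\Kof(B)$, witnessed by some $d\in \Kof(B)^\ell$ and indices as above. The statement ``$\bigoplus_{j}\rv(d_j)\rv(f_{i_j}(x))$ is not well-defined'' is, after eliminating $\K$-quantifiers (using Fact~\ref{fact:rvqe}, applicable since $T$ eliminates $\K$-sorted quantifiers — this is where the benign hypothesis enters), expressible by an $\RVses$-formula in $\rv(f_{i_j}(a))$ and the $\rv$ of various $M$-definable polynomial combinations, with parameters $\rv(d_j)$. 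Applying $M_0$-invariance, the same $\RVses$-formula holds with $d$ replaced by any $d'\in \Kof(\monster)$ realising $\tp(d/M_0)$; under condition~(1), $M$ being $\abs{M_0}^+$-saturated, such a $d'$ can be found in $\Kof(M)$, contradicting the assumption that $(f_i(a))_{i\in I}$ is separating over $\Kof(M)$. Under condition~(2), $p$ definable, I would instead use that $\tp(a/B)$ is definable over $M$, so the set of $d\in \Kof(B)^\ell$ for which the sum fails is an $M$-definable condition on $d$; since it is nonempty in $\Kof(B)$ it is nonempty in $\Kof(M)$, again a contradiction.

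The main obstacle is making precise the passage from ``the sum $\bigoplus_j \rv(d_j)\rv(f_{i_j}(a))$ is not well-defined over $B$'' to a first-order $\RVses$-condition in a controlled parameter set, so that $M_0$-invariance or $M$-definability can be applied cleanly; the subtlety is that ``well-defined'' involves a quantifier over $\K$ in the definition of $\oplus$, and one must check that after $\K$-quantifier elimination the resulting condition genuinely only involves the $\RVses$-values of $d$, $a$, and $M$-definable data, with the dependence on $d$ entering only through $\rv(d_j)$ and finitely many $\rv$ of $M$-definable polynomials in $d$. Once that bookkeeping is done, both cases are a routine compactness/invariance argument. I would also remark that padding the basis to an $\omega$-indexed family and the reduction to finite subsums is harmless, since separation is a property of all finite subtuples, so it suffices to treat each finite collection of indices separately.
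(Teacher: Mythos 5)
Your overall strategy---express the failure of separation as a formula and then use $M_0$-invariance plus saturation (or definability) to pull the offending tuple $d$ from $\Kof(B)$ into $\Kof(M)$---is exactly the paper's. But there are two genuine problems with the implementation, one of which also makes the argument more complicated than needed.

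First, the detour through $\oplus$ and elimination of $\K$-quantifiers is unnecessary and, worse, imports a hypothesis the lemma does not have. The lemma is stated for an arbitrary $\RVses$-expansion as in Definition~\ref{defin:rses}; it does not assume that $T$ eliminates $\K$-sorted quantifiers, and the paper's proof does not use it. There is no need to massage ``$\bigoplus_j\rv(d_j)\rv(f_{i_j}(x))$ is not well-defined'' into an $\RVses$-formula: the failure of the separating condition at the indices $i_0,\ldots,i_{\ell-1}$ is already expressed directly in $L(M)$ by the formula
\[
  \phi(x,w)\coloneqq v\Bigl(\sum_{j<\ell} w_j f_{i_j}(x)\Bigr)>\min_{j<\ell}\bigl(v(w_j)+v(f_{i_j}(x))\bigr),
\]
since $v$ (composed with $\rv$) and $<$ on $\Gam$ are in the language. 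The argument then proceeds at the level of $L$-formulas in $\K$-variables, with no reference to $\RVses$ at all. As written, your proof establishes the lemma only under an extraneous quantifier-elimination assumption.

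Second, in case~(1) you realise $\tp(d/M_0)$ in $M$, but this is not enough: the formula $\phi$ above has parameters $H$ coming from the $M$-definable functions $f_{i_0},\ldots,f_{i_{\ell-1}}$, and $H$ need not lie in $M_0$. For the $M_0$-invariance of $p\invext B$ to transport $a\models\phi(x,d)$ to $a\models\phi(x,\tilde d)$ you need $\tilde d\equiv_{M_0H}d$, i.e.\ $\tilde d h\equiv_{M_0}dh$ for $h$ an enumeration of $H$, not merely $\tilde d\equiv_{M_0}d$. Since $H$ is finite, $\abs{M_0H}=\abs{M_0}$ and $\abs{M_0}^+$-saturation of $M$ still supplies such a $\tilde d$, so the slip is repairable---but as written the invariance step does not go through. (In case~(2) you sidestep this because you work with an $M$-definable condition directly, which is fine.)
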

\begin{proof}
  Towards a contradiction, suppose there are an  $L(M)$-formula    \[
    \phi(x, w)\coloneqq v\left(\sum_{i<\ell} w_i f_i(x)\right)> \min_{i<\ell} \set{v(w_i)+v(f_i(x))}
  \]
 and $d\in B^{\abs{w}}$ such that $a\models \phi(x,d)$. Let $H$ be the set of parameters  appearing in
 $\phi(x,w)$. Choose $\tilde d\in M$  with $\tilde d\equiv_{M_0H} d$ if $M$ is   $\abs{M_0}^+$-saturated, or in  $d_p\phi$ if $p$ is definable. Then $a\models \phi(x,\tilde d)$ contradicts that $(f_i(a))_{i\in I}$ is  separating over $M$.
\end{proof}
Hence, saturation of $M$ allows to lift separating bases. As maximality of $M$ guarantees their existence (see Lemma~\ref{lemma:sepbas} below), we give the following definition.

\begin{defin}
We say that  $T$ has \emph{enough saturated maximal models} iff for every $\kappa>\abs L$, for every $M_0\models T$ of size at most $\kappa$ there is $M\succ M_0$ of size at most $2^{2^\kappa}$ which is maximally complete and $\abs{M_0}^+$-saturated. 
\end{defin}
\begin{rem}
If we restrict to definable types, saturation is not necessary to lift separating bases (cf.~Lemma~\ref{lemma:seplift}), and it is enough to assume only
    ``enough maximal models'' for weak versions of the results of this
    section to go through.
  \end{rem}
\begin{pr}\label{pr:maxsat}
Let $T$ be an $\RVses$-expansion of a theory of henselian valued fields eliminating $\K$-quantifiers, where every $M\models T$ has a unique maximal immediate extension up to isomorphism over $M$.
If $M'\models T$ is maximal, $\kappa> \abs L$, and $\RVsesof(M')$ is $\kappa$-saturated, then $M'$ is $\kappa$-saturated.   
\end{pr}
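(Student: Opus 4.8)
The plan is to lift saturation from $\RVsesof(M')$ to $M'$ by means of the $\K$-quantifier elimination, reducing the realisation of an arbitrary type over a small set to that of $\K$-types in one variable, where the maximality of $M'$ enters through the embedding theory of maximally complete valued fields. Concretely, let $A\subseteq M'$ with $\abs{A}<\kappa$; since $\kappa>\abs{L}$ we may replace $A$ by the substructure it generates, so that $\rv(\Kof(A))\subseteq\RVsesof(A)\subseteq\RVsesof(M')$. It suffices to realise in $M'$ every complete type $p$ over $A$ in a finite tuple of variables, and one does this by induction on the length of the tuple, realising the variables one at a time. For an $\RVses$-variable, by the $\K$-quantifier elimination and full embeddedness of $\RVses$ the type of that variable over the current (small) base $B\subseteq M'$ is determined by its restriction to $\RVsesof(B)$, a set of size $<\kappa$, hence it is realised inside $\RVsesof(M')$ by $\kappa$-saturation. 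For a $\K$-variable we invoke the claim below; having realised it by some $c\in\Kof(M')$, the base is enlarged to the still-small substructure $\langle B,c\rangle\subseteq M'$ and the induction continues. Everything thus reduces to the claim $(\star)$: every complete $\K$-type $p(y)$ over a small substructure $A\subseteq M'$ is realised in $\Kof(M')$.

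To prove $(\star)$, fix $c\models p$ in some elementary extension of $M'$. By the $\K$-quantifier elimination, $p$ is determined by the family of leading terms $\bigl(\rv(f(c))\bigr)_{f\in\Kof(A)[y]}$ — more precisely, by the complete $\RVses$-type of this family over $\RVsesof(A)$. As this is a type over a set of size $<\kappa$, $\kappa$-saturation of $\RVsesof(M')$ provides a realisation $(\gamma_f)_f$ inside $\RVsesof(M')$, and by the $\K$-quantifier elimination it is enough to find $c'\in\Kof(M')$ with $\rv(f(c'))=\gamma_f$ for all $f$. Consider $\sup_{a\in\Kof(A)}v(c-a)$. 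If it is attained, say at $a_0$ (if the value is $\infty$ then $c=a_0\in\Kof(A)$, and we are done), then $\gamma_{y-a_0}$ is a nonzero element of $\RVof(M')$; since $\rv\from\Kof(M')\to\RVof(M')$ is surjective we lift it to some $d\in\Kof(M')$ and set $c':=a_0+d$, and, expanding $f(a_0+u)$ in powers of $u$ with coefficients in $\Kof(A)$ and using the multiplication and the relation $\oplus$ of $\RV$, one checks $\rv(f(c'))=\gamma_f$ for all $f$. If the supremum is not attained, a cofinal subfamily $(a_\nu)_\nu$ is a pseudo-Cauchy sequence in $\Kof(A)\subseteq\Kof(M')$ without pseudo-limit in $\Kof(A)$, so, $\Kof(M')$ being maximally complete, it has a pseudo-limit $c'\in\Kof(M')$. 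If $(a_\nu)$ is of transcendental type, all its leading terms $\rv(f(c))$ already lie in $\RVsesof(A)$ and the pattern is realised by any pseudo-limit, so this $c'$ works; if it is of algebraic type, one additionally needs $c'$ to be a root in $\Kof(M')$ of the associated minimal polynomial, and such a root exists in the maximally complete henselian field $\Kof(M')$ by the classical embedding theorem for immediate extensions — it is precisely here that the hypothesis that maximal immediate extensions of models of $T$ are unique, i.e.~Kaplansky's condition, is used.

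I expect the algebraic-type pseudo-Cauchy case to be the main obstacle: it is the only step where $\kappa$-saturation of $\RVsesof(M')$ gives no help and genuine valued-field algebra is required, namely producing inside $\Kof(M')$ a pseudo-limit that is simultaneously a root of the relevant polynomial. A clean way to isolate it is to reduce $(\star)$ to a maximally complete base: embed the maximal immediate extension of $\Kof(A)$ into the maximally complete field $\Kof(M')$ over $\Kof(A)$ — possible by the uniqueness hypothesis — and argue over it, where the extension generated by $c$ is non-immediate and hence admits a separating basis, so that matching the finitely many relevant leading terms against elements of the $\kappa$-saturated $\RVsesof(M')$ yields the required realisation; this is the role played by Lemma~\ref{lemma:seplift} and by the separating-basis existence underlying Lemma~\ref{lemma:sepbas}.
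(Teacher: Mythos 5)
Your approach is genuinely different from the paper's, but it has a concrete gap in the ``sup attained'' case. You claim that after choosing $(\gamma_f)_f$ realising the $\RVses$-type of $(\rv(f(c)))_f$, any lift $d\in\Kof(M')$ of $\gamma_{y-a_0}$ yields $c'=a_0+d$ with $\rv(f(c'))=\gamma_f$ for \emph{all} $f$, by expanding $f$ in powers of $u=y-a_0$ and using $\oplus$. This fails: knowing $\rv(u)$ does not determine $\rv(f(a_0+u))$ unless the Taylor expansion of $f$ at $a_0$ is separating (Fact~\ref{fact:seprv}), and the optimality of $a_0$ does not give this. Concretely, suppose $\keyof(A)\not\ni\sqrt 2$ and $c$ has $v(c)=0$, $\operatorname{res}(c)=\sqrt 2$. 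Then $\sup_{a\in\Kof(A)}v(c-a)=0$ is attained at $a_0=0$ and $\rv(c)\notin\rv(\Kof(A))$, yet $v(c^2-2)>0$, and two elements $c,c'$ with $\rv(c)=\rv(c')$ may have $v(c^2-2)\ne v(c'^2-2)$ and hence different types; so an arbitrary lift of $\gamma_y$ need not match $\gamma_{y^2-2}$. Your dichotomy ``sup attained vs.\ not attained'' does not cleanly separate the immediate from the non-immediate case; the residue-algebraic (and value-group-torsion) phenomena land you in the attained case but still require separating-basis or pc-sequence analysis. Your final paragraph gestures at the right fix (first embed a maximal immediate extension of $\Kof(A)$ into $\Kof(M')$ by uniqueness, then work over a maximally complete base so that $\Kof(A)(c)$ becomes non-immediate and admits a separating basis), but this is left as an outline, and the transcendental/algebraic pc-sequence cases are also only sketched.

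The paper avoids this element-by-element analysis entirely. It works with a second, $\kappa$-saturated model $M\equiv M'$ and proves the ``forth'' half of a back-and-forth for partial elementary maps with small domain. By alternating two constructions (enlarge the domain to a small elementary substructure; enlarge the $\RVses$-part of the domain to cover the $\RVses$ of the structure generated together with the target tuple $B$) along an $\omega$-chain, the paper arranges that at the limit $M_\omega$ the extension $B_\omega/M_\omega$ generated by $B$ is $\K$-generated and \emph{immediate}. At that point one single appeal to maximality of $M'$ plus uniqueness of maximal immediate extensions extends the map. This buys two things your proposal lacks: the case analysis is eliminated (everything is funnelled into the immediate case), and the use of the maximal-immediate-extension hypothesis is reduced to a single clean step over an elementary submodel rather than over an arbitrary small substructure $A$.
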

The proposition above is folklore, but we include a proof for convenience.  As pointed out to us by the referee, uniqueness of the maximal immediate extension is not needed, and maximality of $M'$ may be relaxed to requiring that chains of balls of length smaller than $\kappa$ have nonempty intersection; the result then follows by using Swiss cheese decomposition. Nevertheless, the proof below has the advantage that it can be adapted to more general contexts, which we will need in Proposition~\ref{pr:diffesmm}.
\begin{proof}
  If $\kappa$ is limit  $\kappa$-saturation equals $\lambda$-saturation for all $\lambda<\kappa$, so we may assume $\kappa$ is successor, hence regular. It suffices to prove that if $M\equiv M'$ is $\kappa$-saturated, then the set $\mathcal S$ of partial  elementary maps between $M$ and $M'$ with domain of size less than $\kappa$ has the back-and-forth property. In fact, we only need the ``forth'' part (and the ``back'' part is true by $\kappa$-saturation of $M$).
 So assume $f\in \mathcal S$, with
\[
  f\from A=(\Kof(A), \RVsesof(A))\to A'=(\Kof(A'), \RVsesof(A'))
\]
and suppose that $A\subseteq B\subseteq M$, with $\abs{B}<\kappa$. In order to extend $f$ to some $g\in \mathcal S$ with domain containing $B$, consider the following two constructions.

\textbf{Construction 1.}  Enlarge $A$ to an elementary substructure. That is, there are $A_1\supseteq A$ and $f_1\from A_1\to A_1'$  extending $f$ such that  $f_1\in \mathcal S$ and $A_1\preceq M$. To do this, we find $A_1'$ with $A'\subseteq A_1'\preceq M'$ and $\abs{A_1'}<\kappa$ using L\"owenheim--Skolem, and invoke $\kappa$-saturation of $M$ to obtain the desired $A_1, f_1$.

\textbf{Construction 2.} For a given $\widehat B$ such that $A\subseteq \widehat B\subseteq M$ and $\abs{\widehat B}<\kappa$,
enlarge $\RVsesof(A)$ so that it contains $\RVsesof(\widehat B)$. That is, there are $A_1\supseteq A$ and $f_1\from A_1\to A_1'$  extending $f$ such that  $f_1\in \mathcal S$ and $\RVsesof(A_1)\supseteq \RVsesof(\widehat B)$.  To do this, it suffices to set $A_1=(\Kof(A), \RVsesof(\widehat B))$ and extend $f$ on $\RVses$ using $\kappa$-saturation of $\RVsesof(M')$; by elimination of $\K$-quantifiers, the extension is still an elementary map.

By repeated applications of the constructions above, we find an elementary chain $(M_n)_{n\in \omega}$ of elementary submodels of $M$, with $A\subseteq M_0$, and $f_n\in\mathcal S$ with domain $M_n$ such that $f_0\supseteq f$, $f_{n+1}\supseteq f_n$, and that if $B_n$ is the structure generated by $M_nB$ then $\RVsesof(B_n)\subseteq \RVsesof(M_{n+1})$.  Let $M_\omega\coloneqq\bigcup_{n\in \omega}M_n$ and let $f_\omega\coloneqq\bigcup_{n\in\omega} f_n$. Since $\kappa$ is regular and uncountable we have  $f\in \mathcal S$, and by construction the structure $B_\omega$ generated by $M_\omega B$ is $\K$-generated and an immediate extension of $M_\omega$. Since $M'$ is maximal and the maximal immediate extension of $M_\omega$ is uniquely determined up to $M_\omega$-isomorphism, we may extend $f_\omega$ to a map $g\in \mathcal S$ with domain $B_\omega\supseteq B$.
\end{proof}
\begin{rem}
Above (and in $\set{\key}\textrm{-}\set{\Gam}$-expansions of the Denef--Pas language), if $\key$ and $\Gam$ are orthogonal it suffices to assume that $\keyof(M')$ and $\Gamof(M')$ are $\kappa$-saturated.
\end{rem}
\begin{co}\label{co:maxsat}
  Suppose that $T$ satisfies the assumptions of Proposition~\ref{pr:maxsat}, and furthermore that every maximal immediate extension of every $M\models T$ is an elementary extension.
Then $T$  has enough saturated maximal models.
\end{co}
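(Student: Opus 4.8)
The plan is to combine Proposition~\ref{pr:maxsat} with a two-step extension procedure: first saturate, then complete. Fix $\kappa>\abs{L}$ and $M_0\models T$ with $\abs{M_0}\le\kappa$, and set $\lambda\coloneqq\max(\abs{M_0},\abs{L})\le\kappa$. First I would pass to a $\lambda^+$-saturated elementary extension $N\succeq M_0$ with $\abs{N}\le 2^\lambda\le 2^\kappa$; such an $N$ exists by the usual construction of saturated extensions, since $\abs{M_0}\le 2^\lambda$ and $\abs{L}\le\lambda$. Then I would take a maximal immediate extension $M$ of $N$. By the hypothesis of the corollary this extension is elementary, so $M\succeq N\succeq M_0$; being an immediate extension with no proper immediate extension, $M$ is maximally complete; and, since $T$ is complete and $M\equiv M_0$, we have $M\models T$.

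The next step is to note that the leading term structure of $M$ is already saturated enough. As $N\subseteq M$ is immediate it does not change $\RVses$, so $\RVsesof(M)=\RVsesof(N)$; and $\RVsesof(N)$ is $\lambda^+$-saturated, because any partial $\RVses$-type over a subset of $\RVsesof(N)$ of size $\le\lambda$ is, in particular, a partial $L$-type over a small subset of $N$, hence realised in $N$, with its realisation lying in the $\RVses$-sorts. I would then apply Proposition~\ref{pr:maxsat} with $\lambda^+$ (which exceeds $\abs{L}$) in the role of $\kappa$: since $M\models T$ is maximal and $\RVsesof(M)$ is $\lambda^+$-saturated, $M$ is $\lambda^+$-saturated, hence in particular $\abs{M_0}^+$-saturated.

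It remains to bound $\abs{M}$. A maximal immediate extension of a valued field $N$ has cardinality at most $2^{\abs{N}}$: it embeds into a spherically complete valued field with the same residue field and value group as $N$ (a Hahn-series construction in the equicharacteristic case, and an analogous construction in the mixed characteristic Kaplansky case), and such a field has cardinality at most the number of functions from the value group to the residue field with well-ordered support, which is at most $\abs{N}^{\abs{N}}=2^{\abs{N}}$. Hence $\abs{M}\le 2^{\abs{N}}\le 2^{2^\kappa}=\beth_2(\kappa)$, completing the verification that $T$ has enough saturated maximal models. The only genuine obstacle I foresee is pinning down a clean reference for this last cardinality bound on maximal immediate extensions (it is classical; see a standard valuation-theory text, or the analogous estimates in~\cite{hhm}); the uniqueness of maximal immediate extensions needed to invoke Proposition~\ref{pr:maxsat} is part of that proposition's standing assumptions, and everything else above is a purely formal assembly.
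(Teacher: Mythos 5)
Your proof is correct and follows essentially the same route as the paper: first pass to an $\abs{M_0}^+$-saturated elementary extension, then take a maximal immediate extension $M$, observe that $\RVsesof(M)$ is unchanged and hence still saturated, and apply Proposition~\ref{pr:maxsat}. The clean reference for the cardinality bound you were looking for is Krull's inequality (\cite[Proposition~3.6]{vddvf}), which bounds $\abs{\K}\le\abs{\key}^{\abs{\Gam}}$ for any maximally complete valued field and thus applies directly without detouring through Hahn-series or Kaplansky-style embeddings; since maximal immediate extensions leave $\key$ and $\Gam$ unchanged, this immediately yields $\abs{M}\le\beth_2(\kappa)$ in the full generality of the corollary's hypotheses.
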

\begin{proof}
  Given  $\kappa>\abs L$ and $M_0\models T$ of size  $\abs{M_0}\le \kappa$, find $M_1\succ M_0$ which is $\abs{M_0}^+$-saturated of size $\abs{M_1}\le 2^{{\abs{M_0}}}$. Let $M$ be a maximal immediate extension of $M_1$. Then $\RVsesof(M)=\RVsesof(M_1)$, and the latter is $\abs{M_0}^+$-saturated because $M_1$ is. By assumption, $M\succ M_1$, and by Proposition~\ref{pr:maxsat} $M$ is $\abs{M_0}^+$-saturated. To conclude, observe that, since by Krull's inequality~\cite[Proposition~3.6]{vddvf} we have $\abs{\K}\le{\key}^{\Gam}$, we obtain $\abs{M}\le \abs{\keyof(M)}^{\abs{\Gamof(M)}}=\abs{\keyof(M_1)}^{\abs{\Gamof(M_1)}}\le (2^{\abs{M_0}})^{2^{\abs{M_0}}}=2^{2^{\abs{M_0}}}$.
\end{proof}
\begin{co}
Every $\RVses$-expansion of a benign $T$ has enough saturated maximal models.
\end{co}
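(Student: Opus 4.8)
The plan is to verify that an arbitrary $\RVses$-expansion $T$ of a benign theory of valued fields satisfies the hypotheses of Corollary~\ref{co:maxsat}, which then yields the conclusion at once. Unwinding that corollary, one needs three things: that $T$ is an $\RVses$-expansion of a theory of henselian valued fields eliminating $\K$-quantifiers; that every $M\models T$ has a maximal immediate extension, unique up to isomorphism over $M$; and that every such maximal immediate extension is an elementary extension.

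The first point splits according to the definition of benign. If $T$ expands the theory of a henselian valued field of equicharacteristic $0$, or of an algebraically maximal Kaplansky valued field of some residue characteristic, then elimination of $\K$-quantifiers is exactly Fact~\ref{fact:rvqe}, and since that elimination holds resplendently the extra structure carried by $\RVses$ is harmless. For the second point I would appeal to the classical theorem of Kaplansky on maximal immediate extensions: uniqueness up to valued-field isomorphism over the base holds in equicharacteristic $0$ (where the constraint on the residue characteristic is vacuous) and, by definition, in the Kaplansky case. Because an immediate extension changes neither $\key$ nor $\Gam$ — and hence, by the Five Lemma recalled above, changes nothing in $\RVses$, extra structure included — the uniqueness of the underlying valued field immediately upgrades to uniqueness of the full $L$-structure.

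The step I would treat with most care is the third one. Let $M\models T$ and let $M^{\max}$ be a maximal immediate extension of its underlying valued field, equipped with the evident $L$-structure; since $\RVsesof(M^{\max})=\RVsesof(M)$, there is no ambiguity in the extra $\RVses$-structure, and $M$ is an $L$-substructure of $M^{\max}$. Being maximal, $M^{\max}$ is algebraically maximal, and it stays henselian of equicharacteristic $0$ (respectively Kaplansky of the same residue characteristic, as residue field and value group are untouched), so it is a model of the incomplete theory to which Fact~\ref{fact:rvqe} applies. For any tuple $a$ from $M$ and any $L$-formula $\phi$, elimination of $\K$-quantifiers rewrites $\phi(a)$ as an $\RVses$-formula evaluated on the $\RVses$-coordinates of $a$ together with finitely many elements $\rv(g_j(a))$ for integer polynomials $g_j$; all of these lie in $\RVsesof(M)=\RVsesof(M^{\max})$, so $\phi(a)$ has the same truth value in $M$ and in $M^{\max}$. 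Hence $M\preceq M^{\max}$; in particular $M^{\max}\models T$, so the elementarity hypothesis of Corollary~\ref{co:maxsat} holds, and that corollary delivers the claim. The only genuinely non-routine ingredient is Kaplansky's uniqueness theorem, the rest being bookkeeping layered on the relative quantifier elimination already recorded.
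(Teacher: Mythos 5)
Your proposal is correct and follows essentially the same route as the paper: cite Kaplansky's uniqueness theorem for maximal immediate extensions, observe that the hypotheses of Fact~\ref{fact:rvqe} (henselian equicharacteristic $0$, resp.\ algebraically maximal Kaplansky) are preserved under passing to a maximal immediate extension so that $\K$-quantifier elimination plus $\RVsesof(M)=\RVsesof(M^{\max})$ gives elementarity, then invoke Corollary~\ref{co:maxsat}. The paper states this more tersely (``it is easy to see that the assumptions of Fact~\ref{fact:rvqe} are preserved by taking maximal immediate extensions''), but your spelled-out version, including the upgrade of the valued-field isomorphism to an $L$-isomorphism via the Five Lemma and the fixed $\RVses$, fills in exactly the intended details.
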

\begin{proof}
 Since the assumptions of Fact~\ref{fact:rvqe} are preserved by taking maximal immediate extensions (which are unique by~\cite[Theorem~5]{kaplansky})  elementarity follows from elimination of $\K$-quantifiers. We conclude by Corollary~\ref{co:maxsat}.
\end{proof}

\begin{lemma}\label{lemma:sepbas}
  Let $p,q\in \invtypes_{\K^{<\omega}}(\monster, M_0)$, let  $(a,b)\models p\otimes q$ and $M_0\prec M\smallprec \monster$. 
  \begin{enumerate}
  \item\label{point:sep1}  If $M$ is maximally complete,  then there are polynomials $(f_i)_{i<\omega}$   in $\Kof(M)[x]$ such that $(f_i(a))_{i<\omega}$ is a separating basis of $\Kof(M)[a]$ as a $\Kof(M)$-vector space.
  \item\label{point:sep2}  If $M$ is  $\abs{M_0}^+$-saturated then, for each $(f_i)_{i<\omega}$  as above, $\set{f_i(a)\mid i<\omega}$ is a separating basis of $\Kof(\monster)[a]$.
  \item\label{point:sepprod}  If  $(f^p_i(a))_{i<\omega}$, $(f^q_j(b))_{j<\omega}$ are  separating bases of $\Kof(\monster)[a]$ and $\Kof(\monster)[b]$, then $(f^p_i(a)\cdot f^q_j(b))_{i,j<\omega}$ is a separating basis of $\Kof(\monster)[ab]$.
  \end{enumerate}
\end{lemma}

\begin{proof}
 Part \ref{point:sep1} is by~\cite[Lemma~3]{baur} (see also~\cite[Lemma~12.2]{hhm})  and does not require saturation, and part \ref{point:sep2} is by Lemma~\ref{lemma:seplift} applied to $(f_i)_{i<\omega}$. So we only need to prove \ref{point:sepprod}. By the definition of $\otimes$, the tuple $(f^p_i(a)\cdot f^q_j(b))_{i,j<\omega}$ is linearly independent, and clearly it generates $\Kof(\monster)[ab]$ as a $\Kof(\monster)$-vector space. Let us check that this basis is separating. Let $B$ be the structure generated by $\monster b$. By Lemma~\ref{lemma:seplift}, $(f^p_i(a))_{i<\omega}$ is a separating basis of the $\K(B)$-vector space $\K(B)[a]$, so we have
 \begin{multline*}
   v\Bigl(\sum_{i,j} d_{ij}f^p_i(a)f^q_j(b)\Bigr)
   =v\Bigl(\sum_{i} \Bigl(\sum_{j}d_{ij}f^q_j(b)\Bigr)f^p_i(a)\Bigr)\\
   =\min_i \Bigl(v\Bigl(\sum_{j}d_{ij}f^q_j(b)\Bigr)+v(f^p_i(a))\Bigr)
   =\min_i \Bigl(\min_j\bigl(v(d_{ij})+v(f^q_j(b))\bigr)+v(f^p_i(a))\Bigr)\\
   =\min_{i,j}\Bigl(v(d_{ij})+v(f^q_j(b))+v(f^p_i(a))\Bigr)=  \min_{i,j}\Bigl(v(d_{ij})+v(f^q_j(b)\cdot f^p_i(a))\Bigr)\qedhere
 \end{multline*}
\end{proof}

\begin{pr}\label{pr:rvot}
Suppose that $T$ eliminates $\K$-quantifiers and has enough saturated maximal models.
For every $p\in \invtypes(\monster)$  there is $q\in \invtypes_{\RVses^\omega}(\monster)$ such that $p\domeq q$. More precisely,  let $p(x,z)\in\invtypes(\monster, M_0)$, where $x$ is a tuple of $\K$-variables and $z$ a tuple of $\RVses$-variables. Let $(a,c)\models p(x,z)$, let $M\succ M_0$ be $\abs{M_0}^+$-saturated and maximally complete, and let $(f_i)_{i<\omega}$ be given by  Lemma~\ref{lemma:sepbas} applied to $a$ and $M$. Then $p$ is domination-equivalent to the \textasteriskcentered-type  $q(y,t)\coloneqq \tp(\operatorname{rv}(f_i(a))_{i<\omega}, c/\monster)$, witnessed by $r(x,z,y,t)\coloneqq \tp(a,c, \operatorname{rv}(f_i(a))_{i<\omega},c/M)$.
\end{pr}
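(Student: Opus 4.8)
I would establish $p\doms q$ and $q\doms p$ separately, each witnessed by $r$; the first inclusion is immediate and all the content lies in the second.

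First, note that $q$ is a global $M$-invariant type: it is the pushforward $g_*p$ of $p$ under the $M$-definable map $g(x,z)\coloneqq(\rv(f_0(x)),\rv(f_1(x)),\ldots,z)$ (the $f_i$ have coefficients in $\Kof(M)$), hence $M$-invariant because $p$ is $M_0$-invariant and $M_0\preceq M$. The same observation shows that $r$ restricts to $p\restr M$ on the $(x,z)$-variables and to $q\restr M$ on the $(y,t)$-variables, so $r\in S_{pq}(M)$; and since $r$ contains the formulas $y_i=\rv(f_i(x))$ together with $z=t$, while $p$ determines $\tp(a,c/\monster)$, we get $p(x,z)\cup r\proves q(y,t)$, i.e.\ $p\doms q$.

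For $q\doms p$ I would take a realisation of $q(y,t)\cup r$, which by the shape of $r$ is of the form $(a',c',(\rv(f_i(a')))_{i<\omega},c')$ with $(a',c')\equiv_M (a,c)$ and $((\rv(f_i(a')))_{i<\omega},c')\models q$, and deduce $(a',c')\models p$. The crucial point is that $(f_i(a'))_{i<\omega}$ is again a separating basis of the $\Kof(\monster)$-vector space $\Kof(\monster)[a']$, and that $a'$ satisfies the same polynomial relations over $\Kof(\monster)$ as $a$. Separateness and linear independence over $\Kof(\monster)$ follow because, by Fact~\ref{fact:seprv}, the statement ``$\bigoplus_i\rv(d_i)\rv(f_i(\cdot))$ is well-defined'' (over finite tuples $d_i$ from $\Kof(\monster)$ and distinct indices) is a condition on the type of $(\rv(f_i(\cdot)))_{i<\omega}$ over $\RVsesof(\monster)$; that type is the same for $a'$ and for $a$ since $((\rv(f_i(a')))_{i<\omega},c')\models q$, and it holds for $a$ because $(f_i(a))_{i<\omega}$ is separating over $\Kof(\monster)$ by Lemma~\ref{lemma:sepbas}. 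That $(f_i(a'))_{i<\omega}$ spans $\Kof(\monster)[a']$ over $\Kof(\monster)$ follows from its spanning $\Kof(M)[a']$ over $\Kof(M)$, which holds because an automorphism of $\monster_1$ fixing $M$ and moving $a$ to $a'$ carries the separating basis $(f_i(a))_{i<\omega}$ of $\Kof(M)[a]$ onto $(f_i(a'))_{i<\omega}$. Granting this, the natural map $\Kof(\monster)\otimes_{\Kof(M)}\Kof(M)[a]\to\Kof(\monster)[a]$ is an isomorphism (it takes the $\Kof(\monster)$-basis $(1\otimes f_i(a))_{i}$ to the $\Kof(\monster)$-basis $(f_i(a))_{i}$), and likewise for $a'$; hence the ideal of polynomial relations of $a$ over $\Kof(\monster)$ is generated by the corresponding ideal over $\Kof(M)$, which is determined by $\tp(a/M)=\tp(a'/M)$, so it coincides with the one for $a'$.

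Finally I would invoke elimination of $\K$-quantifiers. For $h\in\Kof(\monster)[x]$, writing $h(a)=\sum_i d_i f_i(a)$ in the basis, the previous step shows that the same coefficients $d_i\in\Kof(\monster)$ satisfy $h(a')=\sum_i d_i f_i(a')$, and by Fact~\ref{fact:seprv} both $\rv(h(a))=\bigoplus_i\rv(d_i)\rv(f_i(a))$ and $\rv(h(a'))=\bigoplus_i\rv(d_i)\rv(f_i(a'))$ are well-defined; so each $\rv(h(\cdot))$ is defined over $\RVsesof(\monster)$ from $(\rv(f_i(\cdot)))_{i<\omega}$ by one and the same formula, and consequently $((\rv(h(a)))_{h},c)$ and $((\rv(h(a')))_{h},c')$ have the same type over $\RVsesof(\monster)$. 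Since, by $\K$-quantifier elimination and full embeddedness of $\RVses$, the type $\tp(a,c/\monster)$ is determined by the $\RVsesof(\monster)$-type of $((\rv(h(a)))_{h\in\Kof(\monster)[x]},c)$, this gives $(a',c')\models p$, hence $q\doms p$ and $p\domeq q$. The main obstacle is the separating-basis point: getting the ideal of algebraic relations of $a'$ over $\Kof(\monster)$ under control so that the expansion of $h(a')$ in the lifted separating basis uses literally the same coefficients as for $a$; everything after that is a formal consequence of relative quantifier elimination.
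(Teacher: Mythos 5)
Your proof is correct and takes essentially the same approach as the paper's: reduce by $\K$-quantifier elimination (Fact~\ref{fact:rvqe}) to recovering $\rv(h(x))$ for each $h\in\Kof(\monster)[x]$, and use that the separating property of $(f_i(a))_i$ over $\Kof(\monster)$ is decided by the $\RV$-type recorded in $q$ (Fact~\ref{fact:seprv}). Where the paper argues syntactically that $q\cup r$ ``has access to'' each $\rv(f(x))$, you argue semantically via an arbitrary realisation of $q\cup r$, and you make explicit---through the base-change isomorphism $\Kof(\monster)\otimes_{\Kof(M)}\Kof(M)[a]\cong\Kof(\monster)[a]$, which gives $I_a=\Kof(\monster)\cdot(I_a\cap\Kof(M)[x])$---why the coefficients of the decomposition $h(a)=\sum_i d_i f_i(a)$ depend only on $\tp(a/M)$; the paper's write-up passes over this point quickly (it follows from $r\supseteq\tp(a,c/M)$ together with the fact that the $f_i(a)$ also form a basis over $\Kof(\monster)$, by Lemma~\ref{lemma:sepbas}), so spelling it out is a sensible addition.
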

\begin{proof}
  That $p\cup r\proves q$ is trivial. By elimination of $\K$-quantifiers (Fact~\ref{fact:rvqe}), to prove $q\cup r\proves p$ it is enough to show that $q\cup r$ has access to every $\rv(f(x))$,  i.e., that for every $f\in \Kof(\monster)[x]$, there is a $\monster$-definable function $g$ such that $q\cup r\proves \operatorname{rv}(f(x))=g(y)$. 
  Write $f(x)=\sum_{i<\ell} d_i f_i(x)$. By Fact~\ref{fact:seprv}, we have $\operatorname{rv}(f(a))=\bigoplus_{i<\ell} \operatorname{rv}(d_i)\operatorname{rv}(f_i(a))$, and we only need to ensure that this information is in $q\cup r$.
But by Fact~\ref{fact:seprv} whether the $(f_i(a))_{i<\omega}$ form a separating basis or not only depends on the type of their images in $\RV$, which is part of $q$ by definition.
\end{proof}
The work done so far is enough to obtain an infinitary version of Theorem~\ref{athm:rv}. After stating such a version, we will proceed to finitise it.
\begin{rem}\label{rem:spebasstartypes}
 Separating bases of vector spaces of uncountable dimension need not exist. Nevertheless, a \textasteriskcentered-type version of Lemma~\ref{lemma:sepbas} still holds, with the $f_i(a)$ now enumerating separating bases of all finite dimensional subspaces of $\K(M)[a]$.
\end{rem}
\begin{co}\label{co:rvredinf}
If $\kappa$ is a small infinite cardinal, there is an isomorphism of posets $\invtildestarof\kappa\monster\cong\invtildestarof\kappa{\RVsesof(\monster)}$.   If $\otimes$ respects $\doms$ on \textasteriskcentered-types in $\RVsesof(\monster)$, then the same holds in $\monster$, and the above is also an isomorphism of monoids.
\end{co}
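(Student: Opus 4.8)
The plan is to imitate the proof of Corollary~\ref{co:decLtps}, exploiting the fact (recorded in the remark following Fact~\ref{fact:rvqe}) that, since $T$ eliminates $\K$-quantifiers, $\RVses$ is fully embedded. First I would invoke the \textasteriskcentered-type version of Fact~\ref{fact:fullembemb} (see Subsection~\ref{subsec:startypes}) to get that the natural map $\bar\iota\from\invtildestarof\kappa{\RVsesof(\monster)}\to\invtildestarof\kappa\monster$ is an embedding of posets, and that, should $\otimes$ respect $\doms$ on \textasteriskcentered-types in both structures, it is an embedding of monoids. Everything then reduces to (i) surjectivity of $\bar\iota$ and (ii) transferring well-definedness of the monoid from $\RVsesof(\monster)$ to $\monster$.

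For (i), surjectivity is exactly the content of Proposition~\ref{pr:rvot} in its \textasteriskcentered-type incarnation: given an invariant \textasteriskcentered-type $p$ over $\monster$, I would choose a maximally complete, sufficiently saturated $M\succ M_0$ and, using Remark~\ref{rem:spebasstartypes}, a family $(f_i)_i$ of $M$-definable polynomial maps whose values on a realisation $(a,c)$ of $p$ enumerate separating bases of all countable-dimensional $\Kof(M)$-subspaces involved; then $q\coloneqq\tp((\rv(f_i(a)))_i,c/\monster)$ is an $\RVses$-\textasteriskcentered-type with $p\domeq q$, witnessed by $\tp(a,c,(\rv(f_i(a)))_i,c/M)$ exactly as in Proposition~\ref{pr:rvot}. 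The only extra care here is cardinality: it suffices to range over finite subtuples of the $\K$-variables of $p$, of which there are at most $\kappa$, each contributing a countable separating basis, so $q$ is indeed a \textasteriskcentered-type. This makes $\bar\iota$ an isomorphism of posets.

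For (ii), I would apply the \textasteriskcentered-type version of Proposition~\ref{pr:comptransfer} to the fully embedded family $\RVses$, taking as $\tau^p$ the tuple of functions $x\mapsto\rv(f^p_i(x))$ together with the identity on the $\RVses$-variables of $p$ (a choice being fixed once and for all for each $p$). By construction $p\domeq\tau^p_*p$, so the remaining hypothesis to check is $p\otimes q\domeq\tau^p_*p\otimes\tau^q_*q$, and this is where I expect the real work to be. It should be handled by Lemma~\ref{lemma:sepbas}, point~\ref{point:sepprod}: if $(f^p_i(a))_i$ and $(f^q_j(b))_j$ are separating bases of $\Kof(\monster)[a]$ and $\Kof(\monster)[b]$, then $(f^p_i(a)\,f^q_j(b))_{i,j}$ is one of $\Kof(\monster)[ab]$, with $\rv$-images the products $\rv(f^p_i(a))\rv(f^q_j(b))$; feeding this product basis into the argument of Proposition~\ref{pr:rvot} applied to $p\otimes q$ shows that $p\otimes q$ is domination-equivalent to the type of these $\rv$-images together with the $\RVses$-coordinates of $a$ and $b$, and this type is interdefinable with, hence domination-equivalent to, $\tau^p_*p\otimes\tau^q_*q$. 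Proposition~\ref{pr:comptransfer} then yields that $\otimes$ respects $\doms$ in $\monster$ whenever it does in $\RVsesof(\monster)$, and Fact~\ref{fact:fullembemb}, point~\ref{point:embmon}, upgrades $\bar\iota$ to an isomorphism of monoids. The main obstacle is precisely this compatibility-with-$\otimes$ step, namely matching up the ad hoc choices of separating bases for $p$, for $q$, and for $p\otimes q$; it rests on the multiplicativity in Lemma~\ref{lemma:sepbas}, point~\ref{point:sepprod}, and on rereading the proof of Proposition~\ref{pr:rvot}, the \textasteriskcentered-type bookkeeping in Remark~\ref{rem:spebasstartypes} being a secondary and more routine concern.
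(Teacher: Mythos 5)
Your proposal is correct and follows the same route the paper takes; the paper's own proof is the one-line citation of the \textasteriskcentered-types versions of Proposition~\ref{pr:rvot}, Lemma~\ref{lemma:sepbas}, and Proposition~\ref{pr:comptransfer}, and your write-up faithfully unfolds exactly how those three ingredients combine, including the key role of Lemma~\ref{lemma:sepbas}, point~\ref{point:sepprod}, in verifying the $p\otimes q\domeq\tau^p_*p\otimes\tau^q_*q$ hypothesis of Proposition~\ref{pr:comptransfer}. One could phrase the $\otimes$-compatibility step slightly differently (the paper's finitary Theorem~\ref{thm:rvred} takes the \emph{concatenation} of the $f^p_i$'s and $f^q_j$'s as the new family, using the products only as the underlying separating basis, rather than pushing forward along the products themselves), but since the two resulting $\RVses$-types are interdefinable via multiplicativity of $\rv$ this is a cosmetic, not substantive, difference.
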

\begin{proof}
 By the \textasteriskcentered-type versions of Lemma~\ref{lemma:sepbas} and  Propositions~\ref{pr:rvot} and~\ref{pr:comptransfer}. 
\end{proof}

\begin{lemma}\label{lemma:finitise}
Let $M_0\smallprec M\smallprec \monster$,  let $e\models q\in \invtypes_{\RV^\omega}(\monster, M_0)$. Let $I\subseteq \omega$ be such that  $(v(e_i))_{i\in I}$  generates $\mathbb Q\Span{\Gamof(\monster) v(e)}$ over $\mathbb Q\Gamof(\monster)$ as $\mathbb Q$-vector spaces. Let $G\subseteq\RV$ be the multiplicative group generated by  $\RVof(\monster)e$.  Let $(g_j)_{j\in J}\subseteq \key\cap G$ be  such that  $\key\cap G\subseteq \acl(\monster (g_j)_{j\in J})$ and $J$ is countable. Let $b\coloneqq (e_i,g_j\mid i\in I, j\in J)$. Then there is $M\prec N\smallprec \monster$ such that $e$ and $b$ are interalgebraic over $N$.
\end{lemma}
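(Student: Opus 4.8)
The plan is to exhibit a small model $N$ with $M\smallprec N\smallprec\monster$ over which $e$ and $b$ are interalgebraic, i.e.\ such that $e\in\acl(Nb)$ and $b\in\acl(Ne)$. The model $N$ will be produced only at the end of the argument, as a downward L\"owenheim--Skolem small elementary submodel of $\monster$ containing $M$ together with the countably many small parameter sets collected along the way; these add up to something small because $\kappa(\monster)$ has uncountable cofinality. Throughout we may discard from $e$ its coordinates equal to $0$, and assume $I$ and $J$ countable.

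The inclusion $b\in\acl(Ne)$ is the easy half and will be dispatched first: the coordinates $e_i$ of $b$ with $i\in I$ lie in $\dcl(e)$, while each $g_j$ lies in $\dcl(\monster e)$, hence in $\dcl(A_j e)$ for some small $A_j\smallsubset\monster$, so it is enough to keep $\bigcup_{j\in J}A_j$ inside $N$.

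The substance is $e\in\acl(Nb)$, which I plan to establish one coordinate at a time. Fix $i<\omega$; the case $i\in I$ being trivial, assume $i\notin I$. Since $(v(e_{i'}))_{i'\in I}$ generates $\mathbb Q\Span{\Gamof(\monster) v(e)}$ over $\mathbb Q\Gamof(\monster)$, clearing denominators produces $N_i\in\omega\setminus\set 0$, integers $(m_{i,i'})_{i'\in I}$ almost all zero, and $\gamma_i\in\Gamof(\monster)$ with
\[
  N_i\cdot v(e_i)=\gamma_i+\sum_{i'\in I}m_{i,i'}\cdot v(e_{i'}).
\]
Choosing $d_i\in\Kof(\monster)$ with $v(d_i)=\gamma_i$ and putting $u_i\coloneqq e_i^{N_i}\cdot\rv(d_i)^{-1}\cdot\prod_{i'\in I}e_{i'}^{-m_{i,i'}}$, the displayed identity gives $v(u_i)=0$, so $u_i\in\key^\times$; and $u_i$ is built from elements of $\RVof(\monster)e$ using only the $\RV$-product and the inverse, so $u_i\in G$. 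Therefore $u_i\in\key\cap G\subseteq\acl(\monster(g_j)_{j\in J})$, whence $u_i\in\acl(A_i'(g_j)_{j\in J})$ for some small $A_i'\smallsubset\monster$. On the other hand, rewriting the definition of $u_i$ as $e_i^{N_i}=u_i\cdot\rv(d_i)\cdot\prod_{i'\in I}e_{i'}^{m_{i,i'}}$ shows $e_i^{N_i}\in\dcl(A_i''\,u_i\,(e_{i'})_{i'\in I})$, where $A_i''$ collects the parameters of $d_i$; and since $e_i$ is algebraic over $e_i^{N_i}$ (being a root of $X^{N_i}-e_i^{N_i}$, a formula over $e_i^{N_i}$ with finitely many realisations, the value group being torsion-free and $\key^\times$ having finitely many roots of unity of each order), we conclude $e_i\in\acl(A_i'\,A_i''\,(g_j)_{j\in J}\,(e_{i'})_{i'\in I})\subseteq\acl(A_i'\,A_i''\,b)$.

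Finally, $N$ will be taken to be a small elementary submodel of $\monster$ containing $M\cup\bigcup_{j\in J}A_j\cup\bigcup_{i<\omega}(A_i'\cup A_i'')$; both inclusions then hold over $N$, completing the proof. The only step where anything beyond routine bookkeeping happens is the verification that $u_i$ really lands in $\key\cap G$ — membership in $\key$ is the value computation above, membership in $G$ rests on $G$ being closed under product and inverse — and I expect the remaining care to be purely in the cardinal arithmetic that keeps the accumulated parameter sets collectively small, together with standard transitivity manipulations of $\acl$ and $\dcl$.
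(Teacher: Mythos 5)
Your proof is correct and follows essentially the same strategy as the paper's: express each $v(e_i)$, $i\notin I$, as a $\mathbb Q$-linear combination over $\mathbb Q\Gamof(\monster)$ of the $v(e_{i'})$, $i'\in I$, lift this to a monomial $u_i$ with $v(u_i)=0$ lying in $\key\cap G$, use the hypothesis on the $g_j$ to get algebraicity, and conclude via the finite torsion of $\RV$. The only (inessential) difference is in the bookkeeping for the parameters $d_i$: the paper uses $M_0$-invariance of $q$ to observe that the relevant value $v(d_\ell)$ lies in $\Gamof(M_0)$, so one can take $d_\ell\in M$ from the start, whereas you simply collect the $d_i$ into the eventually-chosen small model $N$ by a L\"owenheim--Skolem/cofinality argument, which works equally well.
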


\begin{proof}
By assumption, for $\ell\in \omega\setminus I$ there are $n_\ell>0$, $d_\ell\in \monster$,  a finite $I_0\subseteq I$ and, for $i\in I_0$, integers  $n_{\ell,i}\in \mathbb Z$, with $n_\ell v(e_\ell)=    v(d_\ell)+\sum_{i\in I_0} n_{\ell,i}v(e_{i})$.
  By $M_0$-invariance, we may  assume  $d_\ell\in M$. Let $h_\ell(x)$ be the $M$-definable function $h_\ell(y)\coloneqq (y_\ell^{n_\ell})/(d_\ell\prod_{i\in I_0} y_i^{n_{\ell,i}})$.
By construction, $v(h_\ell(e))=0$, hence $h_\ell(e)\in G\cap \key^\times$, so by assumption 
   $h_\ell(e)\in \acl(\monster(g_j)_{j\in J})$. Let $N\succ M$ be small such that $\set{h_\ell(e)\mid \ell\in \omega\setminus I}\subseteq \acl(N(g_j)_{j\in J})$ and $\set{g_j\mid j\in J}$ is contained in the group generated by $\RVof(N)e$.
  By definition of $h_\ell$, for each $\ell\in \omega\setminus I$, we therefore have $e_\ell^{n_\ell}\in \acl(Nb)$. A $\Gam$ is ordered and  the kernel of $v\from \RV\to \Gam$ is the multiplicative group of a field, $\RV$ has finite $n$-torsion for each $n$, hence $e_\ell$ is algebraic over $e_\ell^{n_\ell}$, hence $e\in \acl(N b)$.
\end{proof}

\begin{thm}[{Theorem~\ref{athm:rv}}]\label{thm:rvred}  For $T$ an $\RVses$-expansion of a theory of valued fields with enough saturated maximal models eliminating $\K$-quantifiers, (e.g.\ a benign one), there is an isomorphism of posets $\invtilde\cong\invtildeof{\RVsesof(\monster)}$. If $\otimes$ respects $\doms$ in $\RVsesof(\monster)$, then $\otimes$ respects $\doms$ in $\monster$, and the above is an isomorphism of monoids.
\end{thm}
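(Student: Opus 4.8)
The plan is to derive the finitary statement from the infinitary one (Corollary~\ref{co:rvredinf}) by finding, for each finitary $p\in\invtypes(\monster)$, a \emph{finitary} representative of its $\domeq$-class inside $\RVsesof(\monster)$, and to do the same compatibly with $\otimes$. First I would invoke Corollary~\ref{co:maxsat} (together with the remarks following it, and uniqueness of maximal immediate extensions from~\cite{kaplansky} in the benign case) to ensure $T$ has enough saturated maximal models, so that Proposition~\ref{pr:rvot} applies. Given $p(x,z)\in\invtypes(\monster,M_0)$ with $x$ a tuple of $\K$-variables and $z$ a tuple of $\RVses$-variables, Proposition~\ref{pr:rvot} produces an $M_0^+$-saturated maximally complete $M\succ M_0$, a separating basis $(f_i(a))_{i<\omega}$ of $\Kof(M)[a]$, and shows $p\domeq q$ where $q(y,t)\coloneqq\tp((\rv(f_i(a)))_{i<\omega},c/\monster)$ is an invariant $\RVses$-valued $*$-type. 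So $\bar\iota$ (Fact~\ref{fact:fullembemb}) is surjective at the level of $*$-types, giving the poset isomorphism on $*$-types; the point is to replace $q$ by a \emph{finitary} $\RVses$-type still in the class $\class q$.

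This is exactly what Lemma~\ref{lemma:finitise} does: writing $e\coloneqq(\rv(f_i(a)))_{i<\omega}$, it selects a finite sub-tuple $b=(e_i,g_j\mid i\in I,\ j\in J)$ — the $e_i$ for $i\in I$ spanning $\mathbb Q\Span{\Gamof(\monster)v(e)}$ over $\mathbb Q\Gamof(\monster)$, and finitely many $g_j\in\key$ whose algebraic closure (over $\monster$) captures $\key\cap G$ where $G$ is the $\RV$-substructure generated by $\RVof(\monster)e$ — such that $e$ and $b$ are interalgebraic over some small $N\succ M$. Here I would use that $\Gam$ has finitely generated $\mathbb Q$-vector space growth only in the relevant coordinates (so $I$ is finite), and that $\key\cap G$ is, as a subfield, finitely generated over $\keyof(\monster)$ up to algebraicity once $v(e)$ is controlled (so $J$ is finite); the torsion-finiteness of $\RV$ used at the end of Lemma~\ref{lemma:finitise} handles passing from $e_\ell^{n_\ell}$ back to $e_\ell$. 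Interalgebraicity over $N$ gives, via Fact~\ref{fact:algok} applied in both directions, that $\iota(q)\domeq\iota(\tp(b,c/\monster))$; absorbing $b$ into a finitary $\RVses$-type $q_0\coloneqq\tp(b,c/\monster)$, we get a finitary $q_0$ with $\iota(q_0)\domeq p$. Setting $\tau^p$ to be the tuple of $M$-definable functions $x\mapsto\rv(f_i(x))$ (for $i\in I$) together with the definable functions computing the $g_j$ and the $\RVses$-coordinates $z$, we have $p\domeq\tau^p_*p$ with $\tau^p_*p$ finitary.

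For the monoid part I need the hypothesis of Proposition~\ref{pr:comptransfer}, namely $p\otimes q\domeq\tau^p_*p\otimes\tau^q_*q$ for $p,q\in\invtypes(\monster)$. This uses Lemma~\ref{lemma:sepbas}\ref{point:sepprod}: if $(f^p_i(a))$ and $(f^q_j(b))$ are separating bases of $\Kof(\monster)[a]$ and $\Kof(\monster)[b]$, then the products $(f^p_i(a)f^q_j(b))$ form a separating basis of $\Kof(\monster)[ab]$, and by Fact~\ref{fact:seprv} $\rv(f^p_i(a)f^q_j(b))=\rv(f^p_i(a))\cdot\rv(f^q_j(b))$ is computed from the components, so the separating-basis data for $p\otimes q$ is recovered from those of $p$ and $q$; the choices of $I,J$ and the $g_j$ for the product likewise reduce to those for the factors, much as the $K^{p\otimes q}_n=K^p_n\times K^q_n$ computation in the proof of Proposition~\ref{pr:eqsttp}. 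Granting this, Proposition~\ref{pr:comptransfer} applied to the fully embedded family $\RVses$ (full embeddedness following from elimination of $\K$-quantifiers, as recorded after Fact~\ref{fact:rvqe}) shows that $\otimes$ respecting $\doms$ in $\RVsesof(\monster)$ transfers to $\monster$, and Fact~\ref{fact:fullembemb}\ref{point:embmon} upgrades $\bar\iota$ to a monoid isomorphism. The main obstacle I anticipate is the bookkeeping in Lemma~\ref{lemma:finitise} and the product compatibility: making sure the finite index sets $I,J$ and the auxiliary elements $h_\ell(e)$, $g_j$ can be chosen \emph{uniformly enough} that they behave well under $\otimes$ — i.e.\ that the separating basis and the $\key$/$\Gam$-generators for $p\otimes q$ genuinely split as (data for $p$) $\sqcup$ (data for $p$-relative data for $q$) — rather than any single deep difficulty.
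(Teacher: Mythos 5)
Your proposal matches the paper's proof: both reduce to \textasteriskcentered-types via Proposition~\ref{pr:rvot}, finitise via Lemma~\ref{lemma:finitise}, and transfer $\otimes$-compatibility via Proposition~\ref{pr:comptransfer} together with Lemma~\ref{lemma:sepbas}(\ref{point:sepprod}). The only detail you elide is that the finiteness of $I$ and $J$ comes from the Abhyankar inequality (finite $\trdeg(\monster(a)/\monster)$ bounds both $\dim_{\mathbb Q}(\mathbb Q\Gamof(\monster(a))/\mathbb Q\Gamof(\monster))$ and $\trdeg(\keyof(\monster(a))/\keyof(\monster))$), which the paper cites explicitly.
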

\begin{proof}
Fix $p(x,z)\in\invtypes(\monster)$ and $ac\models p$, where $x$ is a tuple of  $\K$-variables  and $z$ a tuple of $\RVses$-variables.  Let $(f_i)_{i<\omega}$ be given by Lemma~\ref{lemma:sepbas}.
As usual, denote by $\monster(a)$ the field generated by $a$ over $\monster$.  As $\trdeg(\monster(a)/\monster)$ is finite, by the Abhyankar inequality so is $\dim_{\mathbb Q}(\mathbb Q\Gamof(\monster(a))/\mathbb Q\Gamof(\monster))$. Let $m$ be such that  $v(f_i(a))_{i<m}$ generates $\mathbb Q\Gamof(\monster(a))$ over $\mathbb Q\Gamof(\monster)$. Again by the Abhyankar inequality,  $\trdeg(\keyof(\monster(a))/\keyof(\monster))$ is finite.  By the choice of the $f_j$ and Fact~\ref{fact:seprv}, we may choose a transcendence basis $(g_j\mid j<n)$ of $\keyof(\monster(a))$ over $\keyof(\monster)$, which is contained in the group generated by $\RVof(\monster)(\rv(f_i(a)))_{i<\omega}$. Write each $g_j$ as $h_j(a)$, for suitable definable functions $h_j$. We may now apply  Lemma~\ref{lemma:finitise} to $e=(\rv(f_i(a)))_{i<\omega}$, the $g_j$ defined above, and $I=\set{i\in \omega \mid i<m}$.  Together with  Proposition~\ref{pr:rvot}, we obtain
  \begin{equation}
    p\domeq p'\coloneqq \tp(\rv(f_i(a))_{i<m}, (h_j(a))_{j<n},c/\monster)\label{eq:nicedomeqrv}
\end{equation}
Therefore, every (finitary) type is equivalent to one in $\RVses$. By full embeddedness of $\RVses$, and Fact~\ref{fact:fullembemb}, we obtain the required isomorphism of posets.

By Proposition~\ref{pr:comptransfer} it is enough to show that if $p',q'$ are obtained from $p,q$  as in~\eqref{eq:nicedomeqrv} above, then $p\otimes q\domeq p'\otimes q'$.
Denote by $\rho^p(x,z)\coloneqq (\rv(f^p_i(x))_{i<m_p}, (h^p_j(x))_{j<n_p},\operatorname{id}^p(z))$ the tuple of definable functions from~\eqref{eq:nicedomeqrv}, and similarly for $q$ and $p\otimes q$.
By point~\ref{point:sepprod} of Lemma~\ref{lemma:sepbas} we may take as $(f^{p\otimes q}_i)_{i<\omega}$ (a reindexing on $\omega$ of) the concatenation of $(f^p_i)_{i<\omega}$ with $(f^{q}_i)_{i<\omega}$. By the properties of $\otimes$, the concatenation of $(f^p_i(a))_{i<m_p}$ and $(f^{q}_i(b))_{i<m_q}$ is a basis of the vector space $\mathbb Q\Span{\Gamof(\monster) (v(f^p_i(a)))_{i<\omega}(v(f^q_i(b)))_{i<\omega}}$ over $\mathbb Q\Gamof(\monster)$, and so as $(f^{p\otimes q}_i)_{i<m_{p\otimes q}}$ we may take the concatenation of $(f^p_i)_{i<m_p}$ with $(f^{q}_i)_{i<m_{q}}$. Similarly, as $(h^{p\otimes q}_j)_{j<n_{p\otimes q}}$ 
we may take the concatenation of the respective tuples for $p$ and $q$, and ultimately we obtain that as $\rho^{p\otimes q}$ we may take the concatenation of $\rho^{p}$ with $\rho^{q}$. By~\eqref{eq:nicedomeqrv}, we have $p\otimes q\domeq p'\otimes q'$ and we are done.
\end{proof}

For $\set{\key, \Gam}$-expansions, we are in the setting of Section~\ref{sec:ses}, so we may combine the above with e.g.\ Theorem~\ref{athm:ses}  or Corollary~\ref{co:abgrpses}. We spell out two nice cases; the special subcases of $\mathsf{ACVF}$ and $\mathsf{RCVF}$ were previously known (see the introduction).

\begin{co}[{Theorem~\ref{athm:bendec}}]\label{co:benign}
Let $T$ be a complete $\set{\key}\textrm{-}\set{\Gam}$-expansion of a benign theory of valued fields where, for all $n>1$,  the group $\key^\times/(\key^\times)^n$ is finite.  There is an isomorphism of posets $\invtilde\cong\invtildeof{\keyof(\monster)}\times \invtildeof{\Gamof(\monster)}$.  If $\otimes$ respects $\doms$ in $\key$ and $\Gam$, then $\otimes$ respects $\doms$, and the above is an isomorphism of monoids.
\end{co}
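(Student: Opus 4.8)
The plan is to concatenate Theorem~\ref{thm:rvred} with the abelian-group instance of the short exact sequence machinery of Section~\ref{sec:ses}. First I would apply Theorem~\ref{thm:rvred}: a $\set{\key}\textrm{-}\set{\Gam}$-expansion of a benign theory of valued fields is in particular an $\RVses$-expansion of such a theory, so there is an isomorphism of posets $\invtilde\cong\invtildeof{\RVsesof(\monster)}$, which becomes an isomorphism of monoids once $\otimes$ respects $\doms$ in $\RVsesof(\monster)$. It therefore suffices to analyse $\invtildeof{\RVsesof(\monster)}$.

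Second, I would recognise $\RVses$ as an expanded pure short exact sequence of abelian groups — in the absorbing-element variant, cf.~Remark~\ref{rem:variants} — namely the sequence $1\to\key^\times\to\RV\setminus\set 0\to\Gam\to 0$ discussed in Example~\ref{eg:ses}. Purity is automatic since $\Gam$ is torsion-free, the $\set{\key}\textrm{-}\set{\Gam}$-structure furnishes the extra $L_\mathrm{ac}^*$-structure on $\key$ and $\Gam$, and by definition of $\set{\key}\textrm{-}\set{\Gam}$-expansion this extra structure contains no symbol involving $\key$ and $\Gam$ simultaneously; hence, by Corollary~\ref{co:fullembses}, the induced expansions of $\key$ and $\Gam$ are fully embedded and orthogonal. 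As recorded in Example~\ref{eg:ses}, the relevant sorts $\mathrm A_\phi$ here are exactly the groups $\key^\times/(\key^\times)^n$, which are finite by hypothesis.

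Third, I would invoke the absorbing-element variant of Corollary~\ref{co:abgrpses} with $\mathrm A=\key$ and $\mathrm C=\Gam$. Finiteness of all $\key^\times/(\key^\times)^n$ together with the orthogonality above gives an isomorphism of posets $\invtildeof{\RVsesof(\monster)}\cong\invtildeof{\keyof(\monster)}\times\invtildeof{\Gamof(\monster)}$, noting that the structure induced on the sort $\mathrm A$ is precisely the field structure on $\key$ enriched by the given $\set{\key}$-symbols, so that $\mathrm A(\monster)$ is $\keyof(\monster)$ (up to the $\dcl(\emptyset)$-point $0$). Moreover, if $\otimes$ respects $\doms$ in both $\keyof(\monster)$ and $\Gamof(\monster)$, then it respects $\doms$ in $\RVsesof(\monster)$ and the displayed map is a monoid isomorphism; feeding this back into the first step — so that Theorem~\ref{thm:rvred} yields a monoid isomorphism as well — completes the argument.

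There is no serious obstacle: all the real content sits in Theorem~\ref{thm:rvred} and Corollary~\ref{co:abgrpses}, and what is left is the bookkeeping of matching $\RV,\key,\Gam$ with the $\mathcal B,\mathcal A,\mathcal C$ of the absorbing-element variant, of observing that the hypothesis ``$\key^\times/(\key^\times)^n$ finite for all $n>1$'' is literally the ``$\mathrm A/n\mathrm A$ finite'' hypothesis of Corollary~\ref{co:abgrpses}, and of checking that adjoining the definable point $0$ to $\key^\times$ leaves $\invtildeof{\keyof(\monster)}$ unchanged. The only point deserving a moment's attention is that the Section~\ref{sec:ses} variant for abelian monoids with an absorbing element applies here verbatim, which is precisely what Remark~\ref{rem:variants} asserts.
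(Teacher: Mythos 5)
Your proof is correct and follows essentially the same route as the paper's: reduce to $\RVses$ via Theorem~\ref{thm:rvred}, recognise $\RVses$ as an expanded pure short exact sequence (absorbing-element variant), and conclude by the finiteness hypothesis and Corollary~\ref{co:abgrpses}. The only cosmetic difference is that you deduce orthogonality of $\key$ and $\Gam$ from Corollary~\ref{co:fullembses} after setting up the Section~\ref{sec:ses} framework, whereas the paper cites Fact~\ref{fact:rvqe} directly for this point; both are valid.
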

\begin{proof}
Apply Theorem~\ref{thm:rvred}. By Fact~\ref{fact:rvqe}, if the extra structure on $\RVses$ involves only $\key$ and $\Gam$, and never both at the same time, then the sorts $\key$ and $\Gam$ are orthogonal. As $\RVses$ is an expanded pure short exact sequence,  we conclude by  Corollary~\ref{co:abgrpses}.
\end{proof}

\begin{co}\label{co:benignstar}
Let $T$ be a complete $\set{\key}\textrm{-}\set{\Gam}$-expansion of a benign theory of valued fields,  and let $\mathcal A_\mathrm{k}$ denote the family of sorts $(\key^\times/(\key^\times)^n)_{n\in \omega}$.  For  $\kappa\ge \abs L$, there is an isomorphism of posets $\invtildestarof\kappa\monster\cong\invtildestarof\kappa{\operatorname{\mathcal A_\mathrm{k}}(\monster)}\times \invtildestarof\kappa{\Gamof(\monster)}$. 
  If $\otimes$ respects $\doms$ in $\mathcal A_\mathrm{k}$ and $\Gam$, then $\otimes$ respects $\doms$, and the above is an isomorphism of monoids.
\end{co}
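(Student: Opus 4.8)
The plan is to run the \textasteriskcentered-type analogue of the proof of Corollary~\ref{co:benign}, but replacing the appeal to Corollary~\ref{co:abgrpses} (which needs the groups $\key^\times/(\key^\times)^n$ to be finite) by point~3 of Corollary~\ref{co:decLtps}, which carries no such restriction. Since $T$ is benign it eliminates $\K$-quantifiers and has enough saturated maximal models, so all hypotheses of Corollary~\ref{co:rvredinf} hold: there is an isomorphism of posets $\invtildestarof\kappa\monster\cong\invtildestarof\kappa{\RVsesof(\monster)}$, which is moreover an isomorphism of monoids as soon as $\otimes$ respects $\doms$ on \textasteriskcentered-types in $\RVsesof(\monster)$.

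The second ingredient is to view $\RVsesof(\monster)$ as an expanded pure short exact sequence of abelian groups. By Remark~\ref{rem:itsses} the $\RVses$-structure is the expansion of $1\to\key^\times\to\RV^\times\to\Gam\to 0$ by the field structure on $\key$ and the order on $\Gam$; the sequence is pure because $\Gam$ is torsion-free (Example~\ref{eg:ses}), and as $T$ is a $\set{\key}$-$\set{\Gam}$-expansion the associated $L_\mathrm{ac}^*$ contains no symbol relating $\key$ and $\Gam$, so by Corollary~\ref{co:fullembses} (equivalently, by Fact~\ref{fact:rvqe}) the sorts $\key$ and $\Gam$ are orthogonal in $\RVsesof(\monster)$. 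Choosing for $\RV^\times$ the fundamental family $\mathcal F=\set{\exists y\; x=y^n\mid n\in\omega}$, the attached family of imaginary sorts $\mathcal A_\mathcal F$ is exactly $(\key^\times/(\key^\times)^n)_{n\in\omega}=\mathcal A_\mathrm{k}$ (Example~\ref{eg:ses}); uncountability of $\kappa$ secures $\kappa\ge\abs L$, which is what point~3 of Corollary~\ref{co:decLtps} requires.

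That corollary, applied to this expanded pure short exact sequence, then yields
\[
  \invtildestarof{\kappa}{\RVsesof(\monster)}\cong \invtildestarof{\kappa}{\operatorname{\mathcal A_\mathrm{k}}(\monster)}\times \invtildestarof{\kappa}{\Gamof(\monster)},
\]
an isomorphism of monoids whenever $\otimes$ respects $\doms$ in both $\mathcal A_\mathrm{k}(\monster)$ and $\Gamof(\monster)$ --- in which case it also respects $\doms$ in $\RVsesof(\monster)$. Composing with the isomorphism from the first ingredient gives the statement in the poset case, and in the monoid case one runs ``$\otimes$ respects $\doms$ in $\RVsesof(\monster)$'' through Corollary~\ref{co:rvredinf} to conclude that $\otimes$ respects $\doms$ in $\monster$. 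I do not expect a genuine obstacle: this is a bookkeeping assembly of results already in hand. The only points that need care are checking that the $\set{\key}$-$\set{\Gam}$ hypothesis really does make $\key$ and $\Gam$ orthogonal inside the short exact sequence $\RVses$ (so that the product form, point~3 of Corollary~\ref{co:decLtps}, applies, not merely point~1), and confirming that $\kappa$ meets the cardinality hypotheses of both corollaries used --- which is precisely why the statement is phrased for uncountable $\kappa$.
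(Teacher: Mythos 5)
Your proof is correct and takes essentially the same route as the paper: the paper's own proof reads ``as in the proof of Corollary~\ref{co:benign}, but invoking Corollary~\ref{co:decLtps} instead of Corollary~\ref{co:abgrpses}'', i.e.\ exactly the substitution you describe (with Corollary~\ref{co:rvredinf} in place of Theorem~\ref{thm:rvred} for the \textasteriskcentered-type reduction). Your two points of care --- that the $\set{\key}\textrm{-}\set{\Gam}$ hypothesis yields orthogonality via Fact~\ref{fact:rvqe}, and that uncountable $\kappa$ meets the $\kappa\ge\abs L$ requirement --- are precisely the right things to check and are handled correctly.
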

\begin{proof}
 As in Corollary~\ref{co:benign}, but using Corollary~\ref{co:decLtps} instead of Corollary~\ref{co:abgrpses}.
\end{proof}
 In special cases, results such as the previous corollaries may also be obtained by using domination by a family of sorts in the sense of~\cite[Definition~1.7]{ehm} (see~\cite[Section~6]{dominomin}). This kind of domination was proven in the algebraically closed case in~\cite{hhm},  in the real closed case in~\cite{ehm}, and in the equicharacteristic zero case in~\cite{vicRFD}.

In algebraically or real closed valued fields, the decomposition $\invtilde\cong\invtildeof{\keyof(\monster)}\times\invtildeof{\Gamof(\monster)}$ remains valid after passing to $T^\eq$, as can be shown using resolutions~\cite{hhm,ehm,dominomin}. A natural question is whether Theorem~\ref{thm:rvred} generalises to $T^\eq$, or at least to $T^\mathcal G$, the expansion of $T$ by the geometric sorts of~\cite{hhm_EI}.
\begin{question}
Let $T$ be an $\RVses$-expansion of a theory of valued fields with enough saturated maximal models eliminating $\K$-quantifiers. Are there conditions guaranteeing that the isomorphism $\invtilde\cong \invtildeof{\RVsesof(\monster)}$ holds in $T^{\mathcal G}$, or even in $T^\eq$? Does compatibility of $\doms$ with $\otimes$ transfer? 
\end{question}

\section{Mixed characteristic henselian valued fields}\label{sec:mixedchar}
Let $\K$ be henselian of characteristic $(0,\primo)$, for $\primo\in \mathbb P$. 
For $n\in \omega$, let $\mathfrak m_n\coloneqq\set{x\in \K\mid v(x)>v(\primo^n)}$. Let $\RV_n$  be the multiplicative monoid $\RV_n\coloneqq \K/(1+\mathfrak m_n)$, and $\RV_n^\times\coloneqq \RV_n\setminus \set 0$. For each $n$, denote by $\rv_n\from \K\to\RV_n$ the quotient map.  For $m > n$, we have natural maps $\rv_{m, n}\from \RV_m\to \RV_n$, and the valuation $v\from \K\to \Gam$ induces maps $\RV_n\to \Gam$,  still denoted by $v$. The kernel $\key_n$  of $v$ fits in a short exact sequence $1\to \key_n\to \RV_n\xrightarrow{v} \Gam\to 0$. 
We have relations $\oplus_n$, defined analogously to $\oplus$, and again well-defined precisely when $v(x+y)=\min\set{v(x), v(y)}$.  For $n=0$ we recover the notions from the previous section.  The following generalises Fact~\ref{fact:seprv}.

\begin{fact}\label{fact:seprvmix}
    A basis $(a_i)_{i}$ is separating if and only if, for each $n\in \omega$, each sum 
  \(
 \rv_n(d_0)\rv_n(a_{i_0})\oplus_n\ldots\oplus_n \rv_n(d_\ell)\rv_n(a_{i_\ell})
  \)
  is well-defined, if and only if this happens for $n=0$. If this is the case, then the sum equals $\rv_n\left(\sum_{j\le \ell}d_j a_{i_j}\right)$.
\end{fact}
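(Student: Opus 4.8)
The plan is to reduce every clause of the statement to a single condition on the value group which manifestly does not involve $n$; once that is done, the equivalence with the case $n=0$ (which is Fact~\ref{fact:seprv}) and with the definition of ``separating'' becomes transparent.

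First I would record the elementary observation that, whenever $x\oplus_n y$ is well-defined and $a,b\in\K$ are any lifts of $x,y$ under $\rv_n$, one has $x\oplus_n y=\rv_n(a+b)$: indeed $\rv_n(a+b)$ is always an admissible third coordinate for the relation $\oplus_n(x,y,-)$, hence is the unique one. Together with the criterion recalled above --- that $x\oplus_n y$ is well-defined precisely when $v(a+b)=\min\{v(a),v(b)\}$, a condition independent of the chosen lifts --- this lets one analyse iterated sums by induction on the shape of the parenthesisation. Concretely, I would prove by simultaneous induction on the number of summands that, for a fixed tuple $d=(d_0,\dots,d_\ell)$ from $K_0^{\ell+1}$ and pairwise distinct indices $i_0,\dots,i_\ell$, the sum $\rv_n(d_0)\rv_n(a_{i_0})\oplus_n\cdots\oplus_n\rv_n(d_\ell)\rv_n(a_{i_\ell})$ is well-defined --- meaning that for every order and every choice of parentheses the computation goes through and always yields the same element --- if and only if
\[
v\Bigl(\sum_{j\in S}d_j a_{i_j}\Bigr)=\min_{j\in S}\bigl(v(d_j)+v(a_{i_j})\bigr)
\]
for every nonempty $S\subseteq\{0,\dots,\ell\}$, in which case its value is $\rv_n\bigl(\sum_{j\le\ell}d_j a_{i_j}\bigr)$. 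In the backward direction one combines the disjoint subsums one $\oplus_n$ at a time, each step being well-defined by the valuation criterion and contributing $\rv_n$ of the corresponding actual sum by the first observation. In the forward direction one notes that the subsum indexed by a proper nonempty $S$ occurs as a chunk of some parenthesisation (pair it with its complement), whose top-level operation splits $S$ into two smaller nonempty pieces to which the inductive hypothesis applies.

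Granting this lemma, the conclusion is immediate. The displayed condition does not mention $n$, so well-definedness of all such sums for a given $n$ is equivalent to well-definedness for $n=0$; and quantifying over \emph{all} tuples $d$ --- allowing zero entries, for which $v(d_j)+v(a_{i_j})=\infty$ and the corresponding term drops out of both sides --- turns the displayed condition into exactly the assertion that $(a_i)_i$ is separating. The final clause about the common value is part of the lemma. The one mildly delicate point I expect is the bookkeeping inside the lemma, in particular verifying that well-definedness for \emph{every} order and parenthesisation really forces the valuation identity for \emph{all} nonempty subsets $S$ and not merely for those visible in some single parenthesisation; but this is a routine induction on $\abs{S}$, running exactly parallel to the $n=0$ argument underlying Fact~\ref{fact:seprv}.
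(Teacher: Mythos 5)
The paper states Fact~\ref{fact:seprvmix} (like its $n=0$ special case, Fact~\ref{fact:seprv}) as a fact without proof, so there is no argument in the paper to compare against. Your plan is correct and is essentially the expected one: reduce everything to the $n$-independent valuation identity on all nonempty subsums, using the criterion (recalled by the paper right after introducing the $\oplus_n$) that $x\oplus_n y$ is well-defined precisely when $v(a+b)=\min\{v(a),v(b)\}$ for any lifts $a,b$ of $x,y$, together with the observation that in that case $x\oplus_n y=\rv_n(a+b)$. The induction on $|S|$ you sketch is the right one: for a nonempty $S$, split it as $S_1\sqcup S_2$, choose a parenthesisation of the full sum in which the chunk $(\sum_{S_1})\oplus_n(\sum_{S_2})$ appears, and combine well-definedness of that step with the inductive hypothesis for $S_1$, $S_2$. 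The two small points you flag --- that the valuation criterion is independent of the chosen lifts, and that well-definedness under all orders and parenthesisations really does force the identity for every nonempty $S$ rather than only those visible in one bracketing --- are indeed the only places requiring care, and both are handled correctly (the lift-independence is a one-line check: replacing $a$ by $a(1+m)$ with $m\in\mathfrak m_n$ perturbs $a+b$ by $am$, of strictly larger valuation than $\min\{v(a),v(b)\}$ whenever that minimum is at stake).
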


In this section,   $L$ is  a language as follows.
 We have sorts $\K,\Gam$ and, for each $n\in\omega$, sorts $\key_n, \RV_n$.
 There are function symbols $\rv_n\from \K\to \RV_n$, $\iota\from \key_n\to \RV_n$, $v\from \RV_n\to \Gam$. The sort $\K$ carries a copy of the language of rings, while $\Gam=\Gam\cup\set\infty$ carries the (additive) language of ordered groups, together with an absorbing element $\infty$ and an extra constant symbol $v(\primo)$.
Each $\RV_n$ and $\key_n$ carries the (multiplicative) language of groups, together with an absorbing element $0$ and a ternary relation symbol $\oplus_n$.
 We denote by $\RVses_*$ the reduct to the sorts $\key_n,\RV_n,\Gam$.
 There may be other arbitrary symbols on $\RVses_*$, i.e., as long as they do not involve $\K$.

 An \emph{$\RVses_*$-expansion} of a theory $T'$ of henselian valued fields of characteristic  $(0,\primo)$ is a complete $L$-theory $T\supseteq T'$, with the sorts and symbols above  interpreted in the natural way. Until the end of the section, $T$ denotes such a theory.
 We will freely confuse the sort $\key_n$ with the image of its embedding in $\RV_n$. By~\cite[Theorem~B]{Basarab_1991} (see also \cite[Proposition~4.3]{flenner}) $T$ eliminates $\K$-quantifiers, so  $\RVses_*$ is fully embedded.

\begin{pr}\label{pr:rvotmix}
Suppose $T$ eliminates $\K$-quantifiers and has enough saturated maximal models.  For every $p\in \invtypes(\monster)$  there is $q\in \invtypes_{\RVses_*^\omega}(\monster)$ such that $p\domeq q$. More precisely,  let $p(x,z)\in\invtypes(\monster, M_0)$, where $x$ is a tuple of $\K$-variables and $z$ a tuple of $\RVses_*$-variables. Let $(a,c)\models p(x,z)$, let $M\succ M_0$ be $\abs{M_0}^+$-saturated and maximally complete, and let $(f_i)_{i<\omega}$ be given by the \textasteriskcentered-type version of Lemma~\ref{lemma:sepbas} applied to $a$ and $M$ (cf.~Remark~\ref{rem:spebasstartypes}). Then $p\domeq q(y,t)\coloneqq \tp(\operatorname{rv}_n(f_i(a))_{i,n<\omega}, c/\monster)$, witnessed by $r(x,z,y,t)\coloneqq \tp(a,c, \operatorname{rv}_n(f_{i}(a))_{i,n<\omega},c/M)$.
If $\kappa\ge \abs L$ is small,  there is an isomorphism of posets $\invtildestarof\kappa\monster\cong\invtildestarof\kappa{\RVsesof_*(\monster)}$. If $\otimes$ respects $\doms$ in $\RVsesof_*(\monster)$, then the same holds in $\monster$, and the above is an isomorphism of monoids.
\end{pr}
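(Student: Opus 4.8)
The statement is the mixed-characteristic analogue of Proposition~\ref{pr:rvot} and Corollary~\ref{co:rvredinf}, and the strategy is to reuse the same machinery almost verbatim, the only genuine change being the bookkeeping with the tower of leading-term structures $(\RV_n)_{n<\omega}$ instead of the single $\RV$. So the plan is: first establish the domination-equivalence $p\domeq q$ via the witness $r$, then feed this into Proposition~\ref{pr:comptransfer} together with the (\textasteriskcentered-type version of) Lemma~\ref{lemma:sepbas} to get the poset/monoid isomorphism.

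For the first part, the argument mirrors the proof of Proposition~\ref{pr:rvot} line by line. That $p\cup r\proves q$ is immediate from the definition of $r$. For the converse $q\cup r\proves p$, we invoke $\K$-quantifier elimination (Fact~\ref{fact:rvqemix}): it suffices to show that $q\cup r$ has access to every term $\rv_n(f(x))$ for $f\in\Kof(\monster)[x]$ and every $n<\omega$. Writing $f(x)=\sum_{i<\ell}d_if_i(x)$ with $d_i\in\Kof(\monster)$ and using Fact~\ref{fact:seprvmix} in place of Fact~\ref{fact:seprv}, we have $\rv_n(f(a))=\bigoplus_n{}_{i<\ell}\rv_n(d_i)\rv_n(f_i(a))$, and moreover — this is the key point taken from Fact~\ref{fact:seprvmix} — whether the $(f_i(a))_{i<\omega}$ form a separating basis, and hence whether each $\oplus_n$-sum above is well-defined, depends only on the images of the $f_i(a)$ in the $\RV_n$'s, which is exactly the information recorded in $q$. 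So $q\cup r\proves \rv_n(f(x))=\bigoplus_n{}_{i<\ell}\rv_n(d_i)\rv_n(f_i(x))$, as needed. The only subtlety relative to the equicharacteristic-$0$ case is that one needs separating bases for \emph{countable-dimensional} subspaces rather than finite-dimensional ones (since $x$ may be an infinite tuple of $\K$-variables, and more fundamentally because we are in the \textasteriskcentered-type setting); this is handled exactly as in Remark~\ref{rem:spebasstartypes}, and the existence of $(f_i)_{i<\omega}$ with the stated property is precisely the content of the cited \textasteriskcentered-type version of Lemma~\ref{lemma:sepbas} applied to $a$ over the maximally complete, $\abs{M_0}^+$-saturated model $M$.

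For the isomorphism of posets and monoids, we apply Proposition~\ref{pr:comptransfer} to the fully embedded family of $\RVses_*$-sorts (full embeddedness is the last sentence of Fact~\ref{fact:rvqemix}). For each $p\in\invtypes_\kappa(\monster)$, take $\tau^p$ to be the tuple of definable functions $(\rv_n\circ f_i^p\mid i,n<\omega)$ together with the identity maps on the $\RVses_*$-coordinates of $p$; the first part of this proof gives $p\domeq\tau^p_*p$. For compatibility with $\otimes$, one checks $p\otimes q\domeq\tau^p_*p\otimes\tau^q_*q$ exactly as in the proof of Theorem~\ref{thm:rvred}: by point~\ref{point:sepprod} of Lemma~\ref{lemma:sepbas} the concatenation of the $(f^p_i)_{i<\omega}$ and $(f^q_i)_{i<\omega}$ (suitably reindexed on $\omega$) serves as $(f^{p\otimes q}_i)_{i<\omega}$, so $\rho^{p\otimes q}$ may be taken to be the concatenation of $\rho^p$ and $\rho^q$, whence the claim. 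Proposition~\ref{pr:comptransfer} then yields that $\otimes$ respects $\doms$ in $\monster$ whenever it does in $\RVsesof_*(\monster)$, and Fact~\ref{fact:fullembemb} upgrades the resulting poset embedding $\invtildestarof\kappa{\RVsesof_*(\monster)}\into\invtildestarof\kappa\monster$ — which is surjective by what we just proved — to an isomorphism of monoids in that case.

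**Main obstacle.** There is no deep obstacle: the proof is a transcription of the equicharacteristic-$0$ argument. The one place that requires care is ensuring that Fact~\ref{fact:seprvmix} really does let the single condition ``$(f_i(a))_i$ separating'' control \emph{all} the $\oplus_n$ simultaneously — i.e.\ that access to the $\rv_n(f_i(x))$ for all $n$ is genuinely recorded in $q$ and that well-definedness of the $n$-th partial sum is a property of the $\RV_n$-type alone. This is exactly what the ``if and only if this happens for $n=0$'' clause of Fact~\ref{fact:seprvmix} is for, so once that fact is in hand the rest is routine. The only other bookkeeping point is keeping the reindexing of $(f^{p\otimes q}_i)_{i<\omega}$ on $\omega$ consistent across the two-variable ($i,n$) indexing of the $\rv_n(f_i(x))$, which is purely notational.
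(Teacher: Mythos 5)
Your proof is correct and follows exactly the route the paper takes: the paper's own proof is a one-line instruction to adapt Lemma~\ref{lemma:sepbas}, Proposition~\ref{pr:rvot}, and Corollary~\ref{co:rvredinf}, replacing Fact~\ref{fact:rvqe} by Fact~\ref{fact:rvqemix} and Fact~\ref{fact:seprv} by Fact~\ref{fact:seprvmix}, and your write-up is precisely that adaptation carried out in detail. You correctly identify the one genuine point of care, namely that the ``iff this happens for $n=0$'' clause of Fact~\ref{fact:seprvmix} makes well-definedness of all the $\oplus_n$-sums simultaneously a property recorded in the $\RVses_*$-type $q$, and the rest (the $\omega\times\omega$ reindexing, point~\ref{point:sepprod} of Lemma~\ref{lemma:sepbas} for $\otimes$-compatibility, and Proposition~\ref{pr:comptransfer} plus Fact~\ref{fact:fullembemb} for the poset/monoid upgrade) is exactly the bookkeeping the paper intends.
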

\begin{proof}
Adapt the proofs of Lemma~\ref{lemma:sepbas}, Proposition~\ref{pr:rvot} and Corollary~\ref{co:rvredinf}, replacing Fact~\ref{fact:rvqe} and Fact~\ref{fact:seprv} by~\cite[Theorem~B]{Basarab_1991} and Fact~\ref{fact:seprvmix} respectively.
\end{proof}
  The assumptions of Proposition~\ref{pr:rvotmix} are satisfied in a number of cases of interest. Besides the algebraically closed case, we note the following.
  \begin{rem}\label{rem:finram}
    Every $\RVses_*$-expansion of a finitely ramified henselian valued field has enough saturated models.
  \end{rem}
  \begin{proof}
Finite ramification ensures  immediate extensions are precisely those where $\RVses_*$ does not change. 
By this and~\cite[Proposition~4.3]{flenner},  maximal immediate extensions are elementary, and by~\cite[Corollary~4.29]{vddvf} they are also unique. We may therefore  adapt the proof of Proposition~\ref{pr:maxsat}, replacing $\RVses$ with $\RVses_*$.
\end{proof}

\begin{rem}
 $\RVses_*$ may be viewed as a short exact sequence of abelian structures, each consisting of an inverse system of abelian groups. Since $\Gamma$ is torsion-free, this sequence is pure.\footnote{
Another way of seeing this is that, in a saturated enough model of $T$, the valuation map has a section, inducing a compatible system of angular components, i.e.~a splitting of  $\RVses_*$.} Hence, the results from Section~\ref{sec:ses} apply to this setting, e.g.\ by taking as $\mathcal F$ the family of all pp formulas.
\end{rem}
If $\key$ eliminates imaginaries,  we can get rid of those arising from $\mathcal F$ and obtain a product decomposition. We state a special case as an example application of the results above. We thank the referee for pointing out the ``moreover'' part.
\begin{co}\label{co:wittv}
  In the theory of the Witt vectors over $\mathbb F_\primo^\mathrm{alg}$, the domination monoid is well defined. If $\kappa$ is a small infinite cardinal, then $
  \invtildestarof\kappa{\monster}\cong \invtildestarof\kappa{\keyof(\monster)}\times\invtildestarof\kappa{\Gamof(\monster)}\cong \hat \kappa\times \mathscr P_{\le \kappa}(\icsg(\Gamof(\monster)))$. Moreover, $\invtilde\cong \hat \omega\times \mathscr P_{<\omega}(\icsg(\Gamof(\monster)))$.
\end{co}

\begin{proof}The residue field $\key$ is fully embedded. Moreover, $\key_n=W_n(\key)^\times$ for each $n$, where $W_n(\key)$ is the truncated ring of Witt vectors over $\key$, and $\key_n$ is in definable bijection with $\key^{n-1}\times \key^\times$ (cf.\ \cite[Corollary~1.62 and Proposition 1.67]{touchardburden}). The computation of   $\invtildestarof\kappa{\monster}$ follows. As for $\invtilde$, using discreteness of the value group it is possible to build a pro-definable surjection $\K\to\key^\omega$~\cite[proof of Remark~3.23]{touchardburden}; together with the argument above, this gives the ``moreover'' part.
\end{proof}

\begin{rem}
The product decomposition fails for finitary types: the surjection $\K\to\key^\omega$  yields a $1$-type in $\K$ dominating the type of an infinite independent $\key$-tuple.
\end{rem}
However, finitisation is possible in the case of the $\primo$-adics.
\begin{co}[{Theorem~\ref{athm:qp}}]\label{co:qp}
  Let $T$ be a complete $\set\Gam$-expansion of 
  $\Th(\mathbb Q_\primo)$.
  There is an isomorphism of posets $\invtilde\cong\invtildeof{\Gamof(\monster)}$. If $\otimes$ respects $\doms$ in $\Gamof(\monster)$, then the same holds in $\monster$, and the above is also an isomorphism of monoids. In particular, in $\Th(\mathbb Q_\primo)$,  $\otimes$ respects $\doms$, and $
    (\invtilde, \otimes, \doms)\cong(\pfin(\icsg(\Gamof(\monster))), \cup, \supseteq)$.
\end{co}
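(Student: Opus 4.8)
The plan is to combine the two-step reduction that the paper has already set up with the explicit computation of the domination monoid of regular ordered abelian groups. First I would invoke Proposition~\ref{pr:rvotmix} together with Remark~\ref{rem:finram}: since $\mathbb Q_\primo$ is finitely ramified, every $\RVses_*$-expansion of its theory has enough saturated maximal models and eliminates $\K$-quantifiers, so there is an isomorphism of posets $\invtilde\cong\invtildeof{\RVsesof_*(\monster)}$, which is an isomorphism of monoids provided $\otimes$ respects $\doms$ on the $\RVses_*$-side. The point of the $\set\Gam$-expansion hypothesis is that the residue ring sorts $\key_n$ and the leading-term sorts $\RV_n$ carry essentially no structure beyond what is needed: in $\Th(\mathbb Q_\primo)$ each $\key_n$ is finite (this is where finite ramification and characteristic $(0,\primo)$ enter), so the short exact sequences $1\to\key_n\to\RV_n\to\Gamma\to 0$ become, up to the finitely many imaginary sorts $\mathrm{A}_\phi$ of Definition~\ref{defin:ses}, controlled entirely by $\Gamma$ and a finite piece.

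Second I would feed the expanded pure short exact sequence structure on $\RVses_*$ into the machinery of Section~\ref{sec:ses}. By the remark preceding Corollary~\ref{co:wittv}, $\RVses_*$ admits a compatible system of angular components in a saturated extension, hence is pure, so Corollary~\ref{co:abgrpses} (or its $\mathcal F$-version, Corollary~\ref{co:decLtps}) applies. Because each $\mathrm{A}/n\mathrm{A}$-type sort is finite here — indeed $\key_n$ itself is finite — the finitary product decomposition of Corollary~\ref{co:abgrpses} holds, giving $\invtilde\cong\invtildeof{\mathcal A_\mathcal F(\monster)}\times\invtildeof{\Gamof(\monster)}$ with $\mathcal A_\mathcal F(\monster)$ a finite structure. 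Since the domination poset of a finite structure is trivial (every type is domination-equivalent to the trivial type, as witnessed by algebraicity, cf.~Fact~\ref{fact:algok}), this factor collapses and we get $\invtilde\cong\invtildeof{\Gamof(\monster)}$, as posets and, once compatibility of $\otimes$ and $\doms$ is established, as monoids.

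Third, to finish the \emph{in particular} clause I would observe that in $\Th(\mathbb Q_\primo)$ the value group is a model of Presburger Arithmetic, hence a (discrete) regular oag with no new cosets modulo any $n$: every $\mathbb Z/n\mathbb Z$ is already represented. Applying the finitary version of Corollary~\ref{co:isro} — equivalently Theorem~\ref{thm:regoaddec1tp} for finitary types, where the coset contributions $\kappa_\primo$ all vanish because $\Gamma/n\Gamma$ is finite — the monoid $\invtildeof{\Gamof(\monster)}$ is well-defined and isomorphic to $(\pfin(X),\cup)$, where $X$ is the set of invariant convex subgroups of $\Gamof(\monster)$; the order is $\supseteq$ because $\mathcal H(p)\supseteq\mathcal H(q)$ corresponds to $p\doms q$. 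In particular $\otimes$ respects $\doms$ in $\Gamof(\monster)$, hence in $\monster$ by Theorem~\ref{thm:rvred}'s transfer (Proposition~\ref{pr:comptransfer}), closing the loop.

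The main obstacle I expect is bookkeeping rather than a genuine difficulty: one must check carefully that in the $\set\Gamma$-expansion the sorts $\key_n$ really are finite and carry no extra structure linking them to $\Gamma$ (so that orthogonality of $\mathcal A_\mathcal F$ and $\Gamma$ holds and Corollary~\ref{co:abgrpses} applies in its product form), and that passing from the $\RVses_*$ formulation with its inverse system back to a genuine finitary pure short exact sequence of abelian groups — so that Proposition~\ref{pr:eqsttp}'s local finiteness gives honestly finitary representatives — goes through. Once the structure on $\key_n$ is pinned down as finite, everything else is assembling Proposition~\ref{pr:rvotmix}, Corollary~\ref{co:abgrpses}, and Corollary~\ref{co:isro} in sequence.
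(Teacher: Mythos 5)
Your proposal goes through the short exact sequence machinery of Section~\ref{sec:ses} rather than the paper's more direct argument, but it contains a real gap at the first step. You claim that Proposition~\ref{pr:rvotmix} together with Remark~\ref{rem:finram} yields an isomorphism of posets $\invtilde\cong\invtildeof{\RVsesof_*(\monster)}$. Proposition~\ref{pr:rvotmix} does not say this: it produces, for each finitary $p\in\invtypes(\monster)$, a domination-equivalent $q\in\invtypes_{\RVses_*^{\omega}}(\monster)$ — an honest $\omega$-type, since $q$ tracks $\operatorname{rv}_n(f_i(a))$ for all $i,n<\omega$ — and the isomorphism it gives is $\invtildestarof\kappa\monster\cong\invtildestarof\kappa{\RVsesof_*(\monster)}$, for \textasteriskcentered-types only. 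Unlike Theorem~\ref{thm:rvred} in the equicharacteristic case, the mixed-characteristic reduction does not come with a finitisation built in. So everything downstream of this point — your invocation of the finitary Corollary~\ref{co:abgrpses}, the finitary $\invtilde\cong\invtildeof{\Gamof(\monster)}$ — is running on an input that you have not yet produced. Note also that you cannot recover finitisation from Proposition~\ref{pr:eqsttp}: that result starts from a \emph{locally finitary} type, and the $q$ output by Proposition~\ref{pr:rvotmix} has infinitely many coordinates in each sort $\RV_n$, so it is not locally finitary.

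The missing ingredient is precisely the paper's use of the Abhyankar inequality. The paper observes that since each $\key_n$ is finite, each $\RV_n$ is a finite cover of $\Gam$, so every $\RV_n$-type is algebraic over its image in $\Gam$; by Fact~\ref{fact:algok}, the $\omega$-tuple $q$ is then controlled by the $\omega$-tuple of $v$-images in $\Gam$. The Abhyankar inequality then says that for a \emph{finite} tuple $a$ in $\K$, the group $\mathbb Q\Span{\Gamof(\monster)\cup v(\dcl(\monster a))}/\mathbb Q\Gamof(\monster)$ has finite dimension bounded by $\trdeg(\monster(a)/\monster)$, so this $\omega$-tuple of $\Gam$-values is interalgebraic with a finite subtuple, yielding a genuinely finitary $\Gam$-type $q'$ with $p\domeq q'$. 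With that in hand, Proposition~\ref{pr:comptransfer} gives the transfer of compatibility of $\otimes$ with $\doms$, and Corollary~\ref{co:itro} applied to Presburger (where $\mathbb P_T=\emptyset$) gives the ``in particular'' clause exactly as you say. Your reading of the ``in particular'' part, and your use of Remark~\ref{rem:finram} to check the hypotheses, are correct; only the finitisation of the reduction to $\RVses_*$ needs to be inserted, and that uses the Abhyankar inequality rather than the Section~\ref{sec:ses} machinery.
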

\begin{proof}
  By Remark~\ref{rem:finram} we may apply Proposition~\ref{pr:rvotmix}. Since each $\key_n$ is finite, each $\RV_n$ is a finite cover of $\Gam$, so each element of $\RV_n$ is interalgebraic with an element of $\Gam$. Thus if $p(x,z)\in\invtypes(\monster, M_0)$, where $x$ is a tuple of $\K$-variables and $z$ a tuple of $\RVses_*$-variables, and if $ac\models p$, then $\dim_{\mathbb Q}(\mathbb Q\Gamof(\dcl(\monster(ac)))/\mathbb Q\Gamof(\monster))\leq|xz|$ by the Abhyankar inequality, so there is a finitary invariant type in $\Gamma$ which is interalgebraic with the type $q(y,t)\domeq p$ found in Proposition~\ref{pr:rvotmix}.
We conclude by Proposition~\ref{pr:comptransfer}. The ``in particular'' part then follows from Corollary~\ref{co:itro}.
\end{proof}
 The infinite ramification case remains open.
\begin{prob}
Compute  $\invtilde$ in an infinitely ramified mixed characteristic hen\-se\-lian valued field.  
\end{prob}

\section{D-henselian valued fields with many constants}\label{sec:diff}
Here we deal with certain differential valued fields.  As the proofs are adaptations of those in Section~\ref{sec:bvf},  we give sketches and leave it to the reader to fill in the details.

We let $T$ be a complete theory with sorts $\K, \key, \Gam, \RV$, as in Section~\ref{sec:bvf}, naturally interpreted, and use the notation $\RVses$.
The fields $\key$ and $\K$ have characteristic $0$ and  both carry a derivation $\partial$ (denoted by the same symbol), commuting with the residue map.
The valued differential field $\K$ is \emph{monotone}, i.e.~$v(\partial x)\ge v(x)$,  has \emph{many constants}\footnote{Here we follow the terminology of~\cite{adh}. In~\cite{scarefrob}, this condition is called having \emph{enough constants}.}, i.e.~for every $\gamma\in \Gam$ there is $x\in \K$ with $\partial x=0$ and $v(x)=\gamma$, and is \emph{D-henselian}, i.e.~the following holds. If $P(X)\in \mathcal O\set{X}=\mathcal O[\partial^i X]_{i\in \omega}$ is a differential polynomial over the valuation ring $\mathcal O$,  and $a\in \mathcal O$ is such that $v(P(a))>0$ and for some $i$ we have $v(\operatorname{d}P/\operatorname{d}(\partial^i X))(a)=0$, then there is $b\in \mathcal O$ such that $P(b)=0$ and $v(a-b)>0$.
The family of sorts  $\RVses$ may carry additional structure.

  The derivation  $\partial$ on $\K$ induces a map $\partial_\RV$ on $\RV$ which, for all $\gamma\in \Gam$, fixes $v\inverse(\gamma)\cup \set 0$ setwise, defined by $\partial_\RV(\rv(x))=\rv(\partial(x))$ iff $v(\partial(x))=v(x)$, and $\partial_\RV(\rv(x))=0$ otherwise, which extends the derivation $\partial$ on $\key$.

By~\cite[Theorem~6.4 and Corollary~5.8]{scarefrob} (see also~\cite[Corollary~8.3.3]{adh}) the theory $T$ given by the list of properties above (in a fixed language) eliminates $\K$-quantifiers.

\begin{pr}\label{pr:diffesmm}
The theory  $T$ has enough saturated maximal models.
\end{pr}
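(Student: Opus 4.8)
The plan is to follow the strategy of Proposition~\ref{pr:maxsat} and Corollary~\ref{co:maxsat}, with $\RVses$ playing exactly the same role. Strictly speaking the present setting is not an $\RVses$-expansion in the sense of Definition~\ref{defin:rses}, since the derivation acts on the home sort $\K$; however, the only features of that setting used in the proofs of Proposition~\ref{pr:maxsat} and Corollary~\ref{co:maxsat} are elimination of $\K$-quantifiers, the fact that ``immediate extension'' coincides with ``$\RVses$ unchanged'', and the existence, uniqueness and elementarity of maximal immediate extensions. Elimination of $\K$-quantifiers is the Fact recalled above, so it remains to check the other two points in the differential setting and then rerun the argument.

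For the identification of immediate extensions, I would note that if $M\subseteq N$ is immediate as an extension of valued fields then $\keyof(N)=\keyof(M)$ and $\Gamof(N)=\Gamof(M)$ as sets, the derivation of $\keyof(N)$ restricts to that of $\keyof(M)$, and the induced map $\partial_\RV$ on $\RVof(N)=\RVof(M)$ is computed exactly as in $M$; hence $\RVsesof(N)=\RVsesof(M)$ as full structures. Conversely, by elimination of $\K$-quantifiers every formula is equivalent to one of the form $\phi(x,\rv(f_0(y)),\ldots,\rv(f_m(y)))$ with $\phi$ over $\RVses$ and the $f_i$ polynomials over $\mathbb Z$, so the truth of an instance with parameters from $M$ depends only on $\RVsesof(M)$; thus $\RVsesof(N)=\RVsesof(M)$ forces $M\preceq N$. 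For the second input, I would argue that a maximal (equivalently, spherically complete) immediate extension $M^*$ of $M$ exists by Zorn's Lemma, the cardinality of an immediate extension of $M$ being bounded in terms of $\abs{\keyof(M)}$ and $\abs{\Gamof(M)}$; that $M^*$, being spherically complete, is again monotone, D-henselian, of residue characteristic $0$ and with many constants (residue characteristic $0$, ``many constants'' and monotonicity being preserved when passing to such an extension, since the value group and the constants of $M$ are untouched, while spherical completeness forces D-henselianity by the differential analogue of the fact that maximally complete fields are henselian---cf.\ \cite{scarefrob} and the closely related \cite[Chapter~7]{adh}); hence $M^*$ is a model of the common theory and, by the previous paragraph, $M^*\succeq M$, so $M^*\models T$; and that any two maximal immediate extensions of $M$ are isomorphic over $M$, this being the Kaplansky-type uniqueness theorem for monotone valued differential fields with many constants and residue characteristic $0$ (\cite{scarefrob}; cf.\ \cite{adh}).

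Granting these facts, the proof of Proposition~\ref{pr:maxsat} goes through verbatim and yields: if $M'\models T$ is maximal and $\RVsesof(M')$ is $\kappa$-saturated for some $\kappa>\abs L$, then $M'$ is $\kappa$-saturated, the back-and-forth system being built by alternating L\"owenheim--Skolem constructions with extensions of the $\RVses$-part using $\kappa$-saturation of $\RVsesof(M')$, and closing up at limit stages by uniqueness of the maximal immediate extension. Then, exactly as in the proof of Corollary~\ref{co:maxsat}: given $\kappa>\abs L$ and $M_0\models T$ with $\abs{M_0}\le\kappa$, pick $M_1\succ M_0$ that is $\abs{M_0}^+$-saturated of size $\abs{M_1}\le\beth(\abs{M_0})$ and let $M$ be a maximal immediate extension of $M_1$; then $M\succ M_1$ is maximally complete, $\RVsesof(M)=\RVsesof(M_1)$ is $\abs{M_0}^+$-saturated, so $M$ is $\abs{M_0}^+$-saturated by the previous sentence, and by Krull's inequality $\abs{M}\le\abs{\keyof(M)}^{\abs{\Gamof(M)}}=\abs{\keyof(M_1)}^{\abs{\Gamof(M_1)}}\le\beth_2(\abs{M_0})\le\beth_2(\kappa)$. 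I expect the main obstacle to be locating in the literature the two inputs on maximal immediate extensions in the differential setting---D-henselianity of spherically complete monotone fields with many constants, and Kaplansky-type uniqueness---together with the somewhat delicate verification that monotonicity is genuinely preserved when passing to a maximal immediate extension.
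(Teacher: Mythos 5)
Your strategy is exactly the paper's: adapt Proposition~\ref{pr:maxsat} and Corollary~\ref{co:maxsat} once one knows that maximal immediate extensions of models of $T$ exist, are unique, remain monotone and D-henselian with many constants, and are elementary (the last by elimination of $\K$-quantifiers). The paper supplies precisely the literature inputs you anticipate needing: it first notes, via \cite[Remark~6.2]{scarefrob}, that $\key$ is \emph{linearly surjective} in the terminology of \cite{adh} (a hypothesis you will need in order to invoke the ADH machinery at all), then cites \cite[Theorem~7.4.3]{adh} for both uniqueness and D-henselianity of the maximal immediate extension, and \cite[Lemma~6.3.5]{adh} for monotonicity.

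One genuine flaw in what you wrote: your parenthetical explanation that monotonicity is preserved ``since the value group and the constants of $M$ are untouched'' is not a valid argument. Monotonicity asserts $v(\partial x)\ge v(x)$ for \emph{every} $x$ in the field, and new elements of the immediate extension could a priori violate this even though $\Gamof(N)=\Gamof(M)$; an unchanged value group only rules out new values, not bad behaviour of $\partial$ on new elements at old values. You do flag preservation of monotonicity as delicate in your closing sentence, and the paper indeed treats it as a substantive fact by importing it from \cite[Lemma~6.3.5]{adh} rather than dismissing it. Your reasoning for residue characteristic $0$ and for many constants, by contrast, is fine.
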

\begin{proof}[Proof sketch]
  By~\cite[Remark~6.2]{scarefrob}, $k$ is linearly surjective  in the terminology of~\cite{adh}, so by~\cite[Theorem~7.4.3]{adh} $T$ has uniqueness of maximal immediate extensions. The maximal immediate extension $N$ of $M$ is monotone and D-henselian by~\cite[Lemma~6.3.5 \& Theorem~7.4.3]{adh} with many constants.  As $T$ eliminates $\K$-quantifiers, $M\prec N$, so the proofs of Proposition~\ref{pr:maxsat} and Corollary~\ref{co:maxsat} may be adapted.
\end{proof}

\begin{thm}\label{thm:diff}
  Let $\kappa$ be a small infinite cardinal. 
There is an isomorphism of posets $\invtildestarof{\kappa}\monster\cong\invtildestarof{\kappa}{\RVsesof(\monster)}$. If $\otimes$ respects $\doms$ in $\RVsesof(\monster)$, then the same holds in $\monster$, and the above is also an isomorphism of monoids.
\end{thm}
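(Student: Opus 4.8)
The plan is to mirror the proof of Theorem~\ref{thm:rvred} (Theorem~\ref{athm:rv}) for the differential setting, replacing each valued-field ingredient by its D-henselian analogue. The reduction is entirely driven by elimination of $\K$-quantifiers and the availability of enough saturated maximal models, both of which have just been established for $T$, so the structure of the argument is inherited with minimal changes.

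First I would set up the separating-basis machinery. The key point is that Lemma~\ref{lemma:seplift} and Lemma~\ref{lemma:sepbas} are purely statements about the valued field reduct (they only use $v$, $\rv$, and the well-definedness of $\oplus$, together with maximal completeness and saturation), so they transfer verbatim; in particular, since we are in the infinitary ($\ast$-type) setting, I would use the $\ast$-type version of Lemma~\ref{lemma:sepbas} as recalled in Remark~\ref{rem:spebasstartypes}, enumerating separating bases of all countable-dimensional subspaces of $\Kof(M)[a]$. Next, following Proposition~\ref{pr:rvot}, for $p(x,z)\in\invtypes(\monster,M_0)$ with realisation $(a,c)$, I would take $M\succ M_0$ maximally complete and $\abs{M_0}^+$-saturated (available by the preceding proposition), pick separating-basis polynomials $(f_i)_{i<\omega}$, and set
\[
  q(y,t)\coloneqq\tp\bigl((\rv(f_i(a)))_{i<\omega},c/\monster\bigr),\qquad
  r\coloneqq\tp\bigl(a,c,(\rv(f_i(a)))_{i<\omega},c/M\bigr).
\]
That $p\cup r\proves q$ is trivial; for $q\cup r\proves p$, elimination of $\K$-quantifiers reduces everything to showing $q\cup r$ has access to every $\rv(f(x))$ with $f\in\Kof(\monster)[x]$, and writing $f=\sum_{i<\ell}d_if_i$, Fact~\ref{fact:seprv} gives $\rv(f(a))=\bigoplus_{i<\ell}\rv(d_i)\rv(f_i(a))$, with well-definedness of the sum read off from $q$. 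Hence $p\domeq q$ with $q\in\invtypes_{\RVses^\omega}(\monster)$.

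Then I would invoke Proposition~\ref{pr:comptransfer} with $\mathcal A=\RVses$, which is fully embedded by elimination of $\K$-quantifiers (the standard inspection of $\K$-quantifier-free formulas, as in the remark following Fact~\ref{fact:rvqe}). For the hypotheses of Proposition~\ref{pr:comptransfer} I would take $\tau^p$ to be the tuple of definable functions $x\mapsto(\rv(f_i^p(x)))_{i<\omega}$ (together with the identity on the $\RVses$-coordinates $z$), so that $p\domeq\tau^p_*p$ by the above; the multiplicativity $p\otimes q\domeq\tau^p_*p\otimes\tau^q_*q$ follows from point~\ref{point:sepprod} of Lemma~\ref{lemma:sepbas}, exactly as in the proof of Theorem~\ref{thm:rvred}: one takes $(f_i^{p\otimes q})_{i<\omega}$ to be a reindexing of the concatenation of $(f_i^p)$ and $(f_j^q)$, using that products of separating bases are separating. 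This yields the poset isomorphism $\invtildestarof{\kappa}\monster\cong\invtildestarof{\kappa}{\RVsesof(\monster)}$, and the monoid statement when $\otimes$ respects $\doms$ in $\RVsesof(\monster)$, via Proposition~\ref{pr:comptransfer} and Fact~\ref{fact:fullembemb}.

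The main obstacle, such as it is, lies in verifying that Fact~\ref{fact:seprv} — the equivalence between a basis being separating and each $\oplus$-sum $\bigoplus_{j}\rv(d_j)\rv(a_{i_j})$ being well-defined — survives unchanged in the monotone D-henselian setting; it does, because $\oplus$ and separating bases refer only to the underlying valued field and are insensitive to the derivation, but one should note that the induced derivation $\partial_\RV$ on $\RV$ plays no role in the domination argument and simply rides along as part of the extra structure on $\RVses$ permitted by the setup. A minor secondary point is that, in the $\ast$-type setting, separating bases of uncountable-dimensional spaces need not exist, which is precisely why we phrase everything via countable-dimensional subspaces as in Remark~\ref{rem:spebasstartypes}; this is a cosmetic adjustment already made in Section~\ref{sec:bvf} and requires no new idea here.
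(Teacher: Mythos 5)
Your overall plan is the right one, and the paper's proof is indeed a sketch of exactly this structure: reduce via elimination of $\K$-quantifiers, use separating bases, apply Proposition~\ref{pr:comptransfer}. But there is one genuine gap in your write-up, and it is precisely the substitution the paper flags as the essential modification: you need to work with \emph{differential} polynomials and with $\Kof(M)\set{a}$ (the ring of differential polynomials evaluated at $a$), not with ordinary polynomials and $\Kof(M)[a]$.

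You say Lemmas~\ref{lemma:seplift} and~\ref{lemma:sepbas} ``transfer verbatim'' because they are statements about the pure valued-field reduct, and later you write that elimination of $\K$-quantifiers reduces to showing access to every $\rv(f(x))$ with $f\in\Kof(\monster)[x]$. This is where the argument would fail. In the D-henselian setting, the derivation $\partial$ is part of the language on the sort $\K$, so Scanlon's elimination of $\K$-quantifiers produces formulas of the form $\phi(x,\rv(f_0(y)),\ldots,\rv(f_m(y)))$ where the $f_i$ are \emph{differential} polynomials, i.e.~elements of $\Kof(\monster)\set{y}=\Kof(\monster)[y,\partial y,\partial^2 y,\ldots]$. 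Consequently, to show $q\cup r\proves p$ you need access to $\rv(f(x))$ for all differential $f$, which means the separating basis in the analogue of Lemma~\ref{lemma:sepbas}(1) must be a basis of the $\Kof(M)$-vector space $\Kof(M)\set{a}$, not of $\Kof(M)[a]$, and the $f_i$ must be allowed to be differential polynomials. (The analogue does hold --- $\Kof(M)\set{a}$ is the directed union of the finite-dimensional $\Kof(M)$-vector spaces spanned by monomials of bounded order and degree, so Baur's lemma applies iteratively --- but it is a different statement from the one you invoke.) Your closing remark that the derivation ``plays no role in the domination argument'' is accurate only for $\partial_\RV$ on $\RV$, which indeed just rides along as extra structure on $\RVses$; it is not accurate for $\partial$ on $\K$, whose presence is exactly what forces the passage from $[a]$ to $\set{a}$. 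With that replacement made, the rest of your argument (the $\ast$-type version of the separating-basis lemma, the analogue of Proposition~\ref{pr:rvot}, and the appeal to Proposition~\ref{pr:comptransfer} and Fact~\ref{fact:fullembemb}) goes through as you describe.
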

\begin{proof}[Proof sketch]
By elimination of $\K$-quantifiers, $\RVsesof(M)$ is fully embedded in $M$. If we replace ``polynomial'' by ``differential polynomial'', $\Kof(M)[a]$ by $\Kof(M)\set{a}$, and so on, in the statements of Lemma~\ref{lemma:sepbas} and Proposition~\ref{pr:rvot}, essentially the same proofs go through. We can then conclude as in the proof of Corollary~\ref{co:rvredinf}.
\end{proof}

\begin{lemma}\label{lemma:constants}
$\partial_\RV$ is definable from the the short exact sequence structure, the differential field structure on $\key$, and a predicate for $\constants\coloneqq\set{c\in \RV\mid \partial_\RV(c)=0}$.
\end{lemma}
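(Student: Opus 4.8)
The plan is to recover $\partial_\RV$ pointwise from the listed data. Fix $x \in \RV$. If $x = 0$ then $\partial_\RV(x) = 0$ by definition, so assume $x \neq 0$ and let $\gamma \coloneqq v(x) \in \Gamof(\monster)$. Since $\K$ has many constants, there is $u \in \RVof(\monster)$ with $\partial_\RV(u) = 0$ and $v(u) = \gamma$; such a $u$ is exactly a nonzero element of $\constants$ with $v(u) = \gamma$, and the existence of one is guaranteed by the many-constants hypothesis (lifted through $\rv$). Then $x \cdot u^{-1}$ lies in $v^{-1}(0) = \keyof(\monster)^\times$, i.e.\ in the kernel $\key$ of the short exact sequence, and $u^{-1}$ is defined from the group structure on $\RV$.

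First I would check the key identity $\partial_\RV(x) = u \cdot \partial(x u^{-1})$, where on the right $\partial$ is the derivation of $\key$ applied to the element $x u^{-1} \in \key$, and the product $u \cdot (\,\cdot\,)$ is the $\RV$-multiplication by $u$. To see this, write $x = \rv(a)$ and $u = \rv(b)$ with $b$ a constant in $\K$ (so $v(\partial b) = \infty > v(b)$, whence $\partial_\RV(u) = 0$ is consistent, and more importantly $b^{-1}$ has $\partial(b^{-1}) = -b^{-2}\partial b$, which also has valuation $> v(b^{-1})$). Since $v(a b^{-1}) = 0$, the element $a b^{-1}$ is a unit with nonzero residue, and $\partial(a b^{-1}) = b^{-1}\partial a$ (the $b$-term vanishes because $\partial b = 0$). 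By monotonicity $v(\partial a) \ge v(a)$; one then distinguishes the two cases in the definition of $\partial_\RV$. If $v(\partial a) = v(a)$, then $v(\partial(a b^{-1})) = v(b^{-1}\partial a) = v(a) - v(b) = 0$, so $\rv(\partial(ab^{-1})) = \rv(b^{-1}\partial a) = \rv(b^{-1})\rv(\partial a) = u^{-1}\cdot \partial_\RV(x)$, giving $u \cdot \partial(x u^{-1}) = \partial_\RV(x)$. If instead $v(\partial a) > v(a)$, then $v(\partial(a b^{-1})) > 0$, so $\partial(ab^{-1})$ reduces to $0$ in $\key$, i.e.\ $\partial(x u^{-1}) = 0$ in $\key$, matching $\partial_\RV(x) = 0$.

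Thus $\partial_\RV(x)$ is obtained from $x$ by: choosing any $u \in \constants \setminus \{0\}$ with $v(u) = v(x)$ (definable with the predicate $\constants$, the map $v$, and the group structure), forming $x u^{-1} \in \key$, applying the derivation of the differential field $\key$, and multiplying back by $u$. The value is independent of the choice of $u$: if $u'$ is another such constant, then $u' u^{-1} \in \key^\times$ is itself a constant of $\key$ (its residue is a constant, since $\partial(u' u^{-1})$ involves only $\partial u', \partial u$, both zero in $\RV$), and a short computation with the Leibniz rule in $\key$ — using $\partial(u' u^{-1}) = 0$ there — shows $u' \cdot \partial(x (u')^{-1}) = u \cdot \partial(x u^{-1})$. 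Assembling these observations into a first-order definition of the graph of $\partial_\RV$ in the reduct language is then routine.

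The only genuinely delicate point is handling the case distinction in the definition of $\partial_\RV$ uniformly, i.e.\ making sure the formula ``$\partial_\RV(x) = u \cdot \partial(x u^{-1})$'' correctly produces $0$ precisely when $v(\partial a) > v(a)$; but since that case collapses to $\partial(xu^{-1}) = 0$ inside $\key$, which is detected by the differential field structure on $\key$ together with the embedding $\iota$, no extra care beyond bookkeeping is needed. I do not expect any real obstacle here — the lemma is essentially the observation that ``many constants'' lets one trivialise the twist by which the derivation of $\K$ fails to descend to $\RV$.
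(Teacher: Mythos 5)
Your proof is correct and takes essentially the same approach as the paper: use many constants to pick a constant $u$ with $v(u)=v(x)$, divide to land in $\key$, apply the residue derivation, and multiply back by $u$, with the defining formula given by an existential over $\constants$ justified by independence of the choice. The paper states the identity $\partial_\RV(a)=c\,\partial(a/c)$ without the detailed case analysis you give, but the content is identical.
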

\begin{proof}
 Suppose  $a\in \RV$ and $v(a)\notin\set{0, \infty}$. Since $\K$ has many constants, there is $c\in \RV(M)$ with $\partial_\RV(c)=0$ and $v(c)=v(a)$. Then $a/c\in \keyof(\monster)$, and $\partial_\RV(a)=c\partial(a/c)$. Because this does not depend on the choice of $c$, the function $y=\partial_\RV(x)$ is $\emptyset$-definable by the formula $\phi(x,y) \coloneqq \exists z\in \constants\;((v(z)=v(x))\land  ( y=z\partial(x/z)))$.
\end{proof}
If $L$  had a section of the valuation, or an angular component compatible with $\partial$, we could recover $\constants$ from the constant field of $\key$, and conclude by (the \textasteriskcentered-type version of) Remark~\ref{rem:split}. Yet, the absence of definable splitting is not a serious obstacle. For simplicity, we only give a result in the model companion $\mathsf{VDF}_\mathcal{EC}$.

\begin{thm}[{Theorem~\ref{athm:vdf}}]\label{thm:vdfec}
  In  $\mathsf{VDF}_\mathcal{EC}$, for every small infinite cardinal $\kappa$, the monoid $\invtildestarof\kappa\monster$ is well-defined, and we have isomorphisms
  \[
    \invtildestarof\kappa\monster\cong \invtildestarof\kappa{\keyof(\monster)}\times \invtildestarof\kappa{\Gamof(\monster)}\cong \prod_{\delta(\monster)}^{\le\kappa} \hat \kappa\times \mathscr P_{\le\kappa}(\icsg(\Gamof(\monster)))
  \]
   where  $\delta(\monster)$ is a cardinal, and $\prod_{\delta(\monster)}^{\le\kappa} \hat \kappa$ denotes the submonoid of $\prod_{\delta(\monster)} \hat \kappa$ consisting of $\delta(\monster)$-sequences with support of size at most $\kappa$.
\end{thm}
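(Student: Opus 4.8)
The plan is to combine the general reduction machinery of Section~\ref{sec:diff} with the computations of $\invtildestarof\kappa{\keyof(\monster)}$ and $\invtildestarof\kappa{\Gamof(\monster)}$ already available, so the work splits into three parts: (i) getting the product decomposition $\invtildestarof\kappa\monster\cong \invtildestarof\kappa{\keyof(\monster)}\times \invtildestarof\kappa{\Gamof(\monster)}$, (ii) identifying $\invtildestarof\kappa{\Gamof(\monster)}$, and (iii) identifying $\invtildestarof\kappa{\keyof(\monster)}$.

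For (i), I would start from Theorem~\ref{thm:diff}, which gives $\invtildestarof\kappa\monster\cong\invtildestarof\kappa{\RVsesof(\monster)}$, and then analyse $\RVses$ as an expanded pure short exact sequence $1\to\key\to\RV\to\Gam\to 0$ together with the induced derivation $\partial_\RV$. In $\mathsf{VDF}_\mathcal{EC}$ the value group $\Gamma$ is a divisible ordered abelian group and $\key$ is a model of $\mathsf{DCF}_0$; moreover, by Lemma~\ref{lemma:constants}, the whole differential structure on $\RV$ is recovered from the short exact sequence structure, the differential field structure on $\key$, and the predicate $\constants$ for the $\partial_\RV$-constants. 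In a sufficiently saturated model one has a compatible system of angular components, so (as in Remark~\ref{rem:denefpas} and Remark~\ref{rem:split}) the short exact sequence splits; the point, however, is that after adding a splitting we do \emph{not} merely get the disjoint union of $\key$ and $\Gamma$, because $\constants$ is a predicate involving $\RV$. The right move is to observe that the constant field of $\key$ is a pure algebraically closed field, that the extra sort $\constants\subseteq\RV$ is (after splitting) interdefinable with a copy of the constant field of $\key$ fibred over $\Gamma$, and that the induced structure on $\Gamma$ remains just that of a divisible ordered abelian group, orthogonal to $\key$. One then invokes Corollary~\ref{co:orthsortsiso} (or Corollary~\ref{co:decLtps}) to obtain the product decomposition, checking orthogonality of $\key$ and $\Gamma$ inside $\RVses$ via Corollary~\ref{co:fullembses} and Fact~\ref{fact:rvqe}; since $\mathsf{DCF}_0$ and divisible oags eliminate imaginaries, there is no need to pass through the family $\mathcal A_\mathcal F$, so the $\mathcal A_\mathcal F$-decomposition of Corollary~\ref{co:decLtps} collapses to the clean product of $\invtildestarof\kappa{\keyof(\monster)}$ and $\invtildestarof\kappa{\Gamof(\monster)}$.

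For (ii), $\Gamof(\monster)$ is a divisible ordered abelian group, so its domination monoid for $\kappa$-types is the upper semilattice $\mathscr P_{\le\kappa}(X)$ of sets of invariant convex subgroups of size at most $\kappa$, by the $\kappa$-type version of the computation in~\cite{hhm, dominomin} (this is the trivial-$\mathbb P_T$ special case of Corollary~\ref{co:isro}). For (iii), $\keyof(\monster)\models\mathsf{DCF}_0$; here $\invtildestarof\kappa{\keyof(\monster)}$ should be $\prod_{\delta(\monster)}^{\le\kappa}\hat\kappa$, where $\delta(\monster)$ is the cardinal measuring the number of ``independent dimensions'' available in the differential closure --- concretely, in a stable theory the domination monoid is governed by the free commutative monoid on the non-orthogonality classes of minimal/regular types, and for $\mathsf{DCF}_0$ each such class contributes a copy of $\hat\kappa$ (a $\kappa$-type can realise up to $\kappa$ independent copies of a given regular type), with $\delta(\monster)$ the number of classes; the support condition appears because a $\kappa$-type involves only $\kappa$-many coordinates and hence meets only $\le\kappa$ classes nontrivially. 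I would either cite an existing computation of $\invtilde$ in stable theories in terms of regular types (the stable-domination picture of~\cite{hhm}) adapted to $\kappa$-types, or give the short direct argument: in a stable theory $p\domeq q$ for $\kappa$-types iff they are domination-equivalent after decomposing into regular components, and $\otimes$ always respects $\doms$ in the stable case by Fact~\ref{fact:rightenough} together with commutativity.

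The main obstacle I expect is part (i): showing that the predicate $\constants$ does not entangle $\key$ and $\Gamma$ in a way that would break orthogonality or the product decomposition. One must verify carefully that after adding angular components the reduct to the $\key$-sort together with $\constants$ is (up to interdefinability) just $\key$ with a named subfield and a harmless fibration over $\Gamma$, and that $\Gamma$ sees no induced structure beyond its oag structure --- i.e.\ that every definable subset of a product of $\Gamma$-sorts and $\key$-sorts is a finite union of rectangles. This is where Fact~\ref{fact:rvqe} and the quantifier elimination for $\mathsf{VDF}_\mathcal{EC}$ do the real work; once orthogonality and full embeddedness are in hand, assembling the isomorphism of monoids (well-definedness included, since both factors are well-defined --- $\mathscr P_{\le\kappa}(X)$ trivially and the stable factor by Fact~\ref{fact:rightenough}) is formal via Corollary~\ref{co:orthsortsiso}.
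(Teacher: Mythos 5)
Your proposal follows essentially the same route as the paper: reduce to $\RVses$ via Theorem~\ref{thm:diff}, use Lemma~\ref{lemma:constants} to encode $\partial_\RV$ in the predicate $\constants$, split the sequence in a saturated model to get purity, apply the Section~\ref{sec:ses} machinery, collapse the auxiliary $\mathrm{A}_\phi$-sorts by elimination of imaginaries in $\mathsf{DCF}_0$, and then quote Corollary~\ref{co:isro} for $\Gamma$ and the stable multidimensionality of $\mathsf{DCF}_0$ for $\key$. The one place you are slightly vaguer than the paper is the purity step: the sequence must be pure as a sequence of $L_\constants$-abelian structures (not merely as abelian groups), and the clean way to see this is to use the many-constants axiom to choose a \emph{section of the valuation with image inside $\constants(\RV)$}, so that the split is an $L_\constants$-split; your ``compatible system of angular components'' and ``constant field fibred over $\Gamma$'' phrasing points at this but does not quite pin it down.
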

\begin{proof}
  By Theorem~\ref{thm:diff} we reduce to $\RVses$.  Let $L_\constants\coloneqq L_\mathrm{ab}\cup \set{\constants}$, with $\constants$ a unary predicate.
Expand the language of $\RVses$ by a predicate $\constants$ on each sort, interpreted as the constants in both $\key$ and $\RV$ and as the full $\Gamma$ in $\Gamma$, obtaining a short exact sequence of $L_\constants$-abelian structures\footnote{To be precise, of abelian structures augmented by an absorbing element. See Remark~\ref{rem:variants}.}, expanded by the differential field structure on $\key$ and the order on $\Gamma$.  
 By  Lemma~\ref{lemma:constants}, we may apply the material from Section~\ref{sec:ses}, say by taking as a fundamental family that of all pp $L_\constants$-formulas, provided we show that $\RVses$ is pure. If $M\models \mathsf{VDF}_\mathcal{EC}$ is $\aleph_1$-saturated then, since $M$ has many constants, we may find a section $s\from \Gamma(M)\to \RVof(M)$ of the valuation with image included in $\constants(\RVof(M))$. Hence the short exact sequence $\RVsesof(M)$ of $L_\constants$-abelian structures splits, so is pure by Remark~\ref{rem:split}.  Since $\key$ is a model of $\mathsf{DCF}_0$, which eliminates imaginaries, we may get rid of the auxiliary sorts $\mathrm{A}_\phi$. We conclude by Corollary~\ref{co:isro} and the fact that $\mathsf{DCF}_0$ is $\omega$-stable multidimensional (see~\cite[Section~5]{invbartheory} for the relation between our setting and that of domination via forking in stable theories). 
\end{proof}
\begin{rem}\label{rem:vdfnofin}
  In $\mathsf{VDF}_\mathcal{EC}$, finitisation is not to be expected (e.g.~by~\cite[Proposition~4.2]{eivdf}), and in fact not possible: one may construct a $1$-type $p\in \invtypes_{\K}(\monster)$ with  $((v\circ\partial^n)_* p)_{n\in \omega}$  non-algebraic and pairwise $\wort$, hence not domination-equivalent. 
\end{rem}
Computing the image of the home sort in finitely many variables seems difficult.
\begin{rem}\label{rem:vdifff}
   Most arguments in this section may be adapted to $\sigma$-henselian valued difference fields of residue characteristic $0$. An analogue of Theorem~\ref{thm:diff} goes through, using quantifier reduction to $\RVses$ and a $\sigma$-Kaplansky theory yielding uniqueness and elementarity of maximal immediate extensions~\cite[Theorem~5.8  and Theorem~7.3]{durhanonay}.
 In every completion of the model companion of the \emph{isometric} case (see~\cite{bmsfrob}), in sufficiently saturated models there is a section of the valuation with values in the fixed field,  hence
 one may obtain the decomposition $\invtildestarof\kappa\monster\cong \invtildestarof\kappa{\keyof(\monster)}\times \invtildestarof\kappa{\Gamof(\monster)}$, by regarding $\RVses$ as a pure short exact sequence of $\mathbb Z[\sigma]$-modules, and using elimination of imaginaries in $\mathsf{ACFA}_0$. The same goes through in the \emph{multiplicative} setting, provided that,  in the notation of~\cite{pal},  $\rho$ is transcendental. This applies, e.g., to the model companion of the \emph{contractive} case (see~\cite{azgin}).
\end{rem}

\bibliographystyle{abbrv}

\end{document}